\theoremstyle{plain}
\newtheorem{theorem}{Theorem}[section]
\newtheorem{lemma}[theorem]{Lemma}
\newtheorem{prop}[theorem]{Proposition}
\newtheorem{fact}[theorem]{Fact}
\newtheorem{obs}[theorem]{Observation}
\newtheorem{question}{Question}
\theoremstyle{definition}
\newtheorem*{ack}{Acknowledgements}
\newtheorem{example}[theorem]{Example}
\newtheorem{definition}[theorem]{Definition}
\newtheorem*{remark}{Remark}
\newcommand{\N}{\mathbb{N}}
\newcommand{\R}{\mathbb{R}}
\newcommand{\lemr}{\mbox{-}{\sf LEM}_\R}
\newcommand{\dmlr}{\mbox{-}{\sf DML}_\R}
\newcommand{\dner}{\mbox{-}{\sf DNE}_\R}
\newcommand{\om}{\omega}
\newcommand{\leqW}{\leq_{\sf W}}
\newcommand{\leqcW}{\leq^c_{\sf W}}
\newcommand{\leqcGW}{\leq^c_{\gW}}
\newcommand{\jump}{\mathbf{j}}
\newcommand{\rea}{\Vdash}
\newcommand{\reap}{\Vdash_\mathbf{j}}
\newcommand{\reak}{\Vdash_K}
\newcommand{\upto}{\upharpoonright}
\newcommand{\pcolon}{\colon\!\!\!\subseteq}
\newcommand{\tto}{\rightrightarrows}
\newcommand{\lamq}{\lambda}
\newcommand{\dom}{{\rm dom}}
\newcommand{\ul}[1]{\underline{#1}}
\newcommand{\bfN}{\mathbf{N}}
\newcommand{\set}{\mathbf{Set}}
\newcommand{\foralle}{\forall^+}
\newcommand{\wklQ}{{\sf WKL}_{=2}}
\newcommand{\spair}[2]{(#1\Vdash #2)}
\newcommand{\sspair}[2]{(#1,#2)}
\newcommand{\dia}{\diamondsuit}
\newcommand{\ep}{\varepsilon}
\newcommand{\num}[1]{\underline{ #1}}
\newcommand{\limN}{{\sf Lim}_\om}
\newcommand{\one}{\mathbf{1}}
\newcommand{\fr}{{}^\smallfrown}
\newcommand{\gW}{{\sf gW}}
\newcommand{\clo}{\dia}
\newcommand\tboldsymbol[1]{%
\protect\raisebox{0pt}[0pt][0pt]{%
$\underset{\widetilde{}}{\mathbf{#1}}$}\mbox{\hskip 1pt}}
\newcommand{\ifthen}[3]{
{\sf if}\ {#1}\ {\sf iszero}\ {\sf then}\ {#2}\ {\sf else}\ {#3}
}
\newcommand{\tpbf}[1]{\tboldsymbol{#1}}
\newcommand{\com}[1]{
\color{magenta}\emph{#1}
\color{black}}
\title[Realizability and reverse mathematics]
{Degrees of incomputability, realizability and constructive reverse mathematics}
\author{Takayuki Kihara}
\date{}
\begin{document}
\maketitle

\begin{abstract}
We introduce a method for assigning a realizability notion to each degree of incomputability.
In our setting, we make use of Weihrauch degrees (degrees of incomputability/discontinuity of partial multi-valued functions) to obtain Lifschitz-like relative realizability predicates.
We present sample examples on how to lift some separation results on Weihrauch degrees to those over intuitionistic Zermelo-Fraenkel set theory ${\bf IZF}$.
\end{abstract}

\section{Introduction}

\subsection{Summary}
This article is a contribution to constructive reverse mathematics initiated by Ishihara \cite{Is05,Ish06}.
{\em Reverse mathematics} is the field that seeks to find the minimum axioms necessary to prove a given mathematical theorem \cite{SOSOA:Simpson}.
Classical reverse mathematics measures the complexity of {\em induction} and {\em comprehension} principles which are necessary for proving a given theorem; the former is particularly important from a foundational perspective.
Constructive reverse mathematics measures the complexity of the law of {\em excluded middle} which is necessary for proving a given theorem.

For the overview of previous works on constructive reverse mathematics, we refer the reader to Diener \cite{Die18} (and also \cite{Ish23}).
However, contrary to \cite{Die18}, in our article, we do not include the axiom of countable choice ${\sf AC}_{\om}$ in our base system of constructive reverse mathematics.
Why do we think that the axiom of countable choice ${\sf AC}_{\om}$ should not be implicitly assumed?
This is because including ${\sf AC}_{\om}$ makes it difficult to compare the results with classical reverse mathematics \`a la Friedman-Simpson \cite{SOSOA:Simpson}.
For instance, weak K\"onig's lemma ${\sf WKL}$ is equivalent to the intermediate value theorem ${\sf IVT}$ over Biship-style constructive mathematics ${\bf BISH}$ (which entails countable choice), whereas ${\sf WKL}$ is strictly stronger than ${\sf IVT}$ in classical reverse mathematics.

In order to avoid this difficulty, several base systems other than ${\bf BISH}$ have been proposed.
For instance, Troelstra's elementary analysis ${\bf EL}_0$ is widely used as a base system of constructive reverse mathematics \cite{Nem23}.
Here, ``${\bf EL}_0$ plus the law of excluded middle'' is exactly the base system ${\bf RCA}_0$ of classical reverse mathematics; and moreover, ``${\bf EL}_0$ plus the axiom of countable choice'' is considered as Biship-style constructive mathematics ${\bf BISH}$.
Thus, ${\bf EL}_0$ lies in the intersection of ${\bf RCA}_0$ and ${\bf BISH}$.

Our aim is to separate various non-constructive principles which are equivalent under countable choice ${\sf AC}_{\om}$, and our main tool is (a topological version of) Weihrauch reducibility (cf.~\cite{pauly-handbook}).
The relationship between Weihrauch reducibility and intuitionistic systems are extensively studied, e.g.~in \cite{Fu21,Kuy17,Uf18,Yos2}.
Thus, some of separations may be automatically done using the previous works.
Our main purpose in this article is to develop an intuitive and easily customizable framework constructing ${\sf AC}_{\om}$-free models separating such non-constructive principles.
In practice, establishing a convenient framework for separation arguments is sometimes more useful than giving a completeness result (for instance, look at classical reverse mathematics, where computability-theorists make use of Turing degree theory in order to construct $\om$-models separating various principles over ${\bf RCA}_0$, cf.~\cite{Sho10}).
Our work gives several sample separation results, not just over ${\bf EL}_0$, but even over intuitionistic Zermelo-Fraenkel set theory ${\bf IZF}$.

\subsubsection{Variants of weak K\"onig's lemma}\label{sec:between-llpo-wkl}

In this article, we discuss a hierarchy between ${\sf LLPO}$ and ${\sf WKL}$ which collapses under the axiom ${\sf AC}_{\om,2}$ of countable choice (even with binary values).
Here, ${\sf AC}_{X,Y}$ is the statement that 
\[\forall x\in X\exists y\in Y\varphi(x,y)\longrightarrow\exists f\colon X\to Y\forall x\in X\varphi(x,f(x)),\]
and $\om$ is the set of all natural numbers.
Consider the following three principles:

\begin{itemize}
\item The binary expansion principle ${\sf BE}$ states that every regular Cauchy real has a binary expansion.
\item The intermediate value theorem ${\sf IVT}$ states that for any continuous function $f\colon[0,1]\to[-1,1]$ if $f(0)$ and $f(1)$ have different signs then there is a regular Cauchy real $x\in[0,1]$ such that $f(x)=0$.
\item Weak K\"onig's lemma ${\sf WKL}$ states that every infinite binary tree has an infinite path.
\end{itemize}

Here, a regular Cauchy real is a real $x$ which is represented by a sequence $(q_n)_{n\in\om}$ of rational numbers such that $|q_n-q_m|<2^{-n}$ for any $m\geq n$.
As mentioned above, under countable choice ${\sf AC}_{\om,2}$, we have
\[{\sf WKL}\leftrightarrow{\sf IVT}\leftrightarrow{\sf BE}\leftrightarrow{\sf LLPO}.\]

Even if countable choice, Markov's principle, etc.~are absent, we have the forward implications; see Berger et al.~\cite{BIKN}.
Ishihara and Nemoto (in private communication) asked what happens to the backward implications under the absence of the axiom of countable choice.

\subsubsection*{Robust division principle ${\sf RDIV}$}
We also examine the {\em division} principles for reals.
It is known that {\em Markov's principle} (double negation elimination for $\Sigma^0_1$-formulas) is equivalent to the statement that, given regular Cauchy reals $x,y\in[0,1]$, we can divide $x$ by $y$ whenever $y$ is nonzero; that is, if $y$ is nonzero then $z=x/y$ for some regular Cauchy real $z$.

However, it is algorithmically undecidable if a given real $y$ is zero or not.
To overcome this difficulty, we consider any regular Cauchy real $z$ such that if $y$ is nonzero then $z=x/y$ as a value of the division.
If we require $x\leq y$ then such a $z$ always satisfies $x=yz$ whatever $y$ is (since $y=0$ and $x\leq 0$ implies $x=0$, so any $z$ satisfies $0=0z$).
To avoid the difficulty of deciding if $y$ is nonzero or not, we always assume this additional requirement, and then call it the {\em robust division} principle ${\sf RDIV}$.
In other words, ${\sf RDIV}$ is the following statement:
\[(\forall x,y\in[0,1])\;\left[x\leq y\rightarrow\;(\exists z\in[0,1])\;x=yz\right].\]

One may think that the condition $x\leq y$ is not decidable, but we can always replace $x$ with $\min\{x,y\}$ without losing anything.
Namely, the robust division principle ${\sf RDIV}$ is equivalent to the following:
\[(\forall x,y\in[0,1])(\exists z\in[0,1])\;\min\{x,y\}=yz.\]

The principle ${\sf RDIV}$ is known to be related to problems of finding Nash equilibria in bimatrix games \cite{Pa10} and of executing Gaussian elimination \cite{pauly-kihara2-mfcs}.
The following implications are known:
\[
\xymatrix@R=6pt{
 & & {\sf BE} \ar[dr] & \\
{\sf WKL} \ar[r] & {\sf IVT} \ar[dr] \ar[ur] & & {\sf LLPO} \\
 & & {\sf RDIV} \ar[ur] & 
}
\]

\subsection{Main Theorem}

In this article, to construct ${\sf AC}_{\om,2}$-free models of ${\bf IZF}$, we incorporate Weihrauch reducibility \cite{pauly-handbook} into Chen-Rathjen's Lifschitz-like realizability \cite{ChRa12}.
In general, we introduce the notion of a {\em $j$-operator} on a relative partial combinatory algebra.
We call the pair of a relative partial combinatory algebra $\mathbb{P}$ and a $j$-operator $\jump$ a {\em site}.
Then we show that given a site $(\mathbb{P},\jump)$ always yields a realizability predicate which validates ${\bf IZF}$.
As sample results of our method, we describe realizability interpretations of the following:

\begin{theorem}\label{thm:main-theorem}
Each of the following is realizable (with respect to the realizability interpretation over a suitable site).
\begin{enumerate}
\item ${\bf IZF}+{\sf LLPO}+\neg{\sf RDIV}+\neg{\sf BE}$.\label{item:main1b}
\item ${\bf IZF}+{\sf RDIV}+\neg{\sf BE}$.\label{item:main2b}
\item ${\bf IZF}+{\sf BE}+\neg{\sf RDIV}$.\label{item:main3b}
\item ${\bf IZF}+{\sf RDIV}+{\sf BE}+\neg{\sf IVT}$.\label{item:main4b}
\item ${\bf IZF}+{\sf IVT}+\neg{\sf WKL}$.\label{item:main5b}
\end{enumerate}
\end{theorem}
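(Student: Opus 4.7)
The plan is to apply the general construction sketched in the introduction --- that every idempotent jump operator on a relative partial combinatory algebra induces a realizability predicate validating ${\bf IZF}$ --- with five carefully chosen Weihrauch degrees, one for each item. Concretely, I would take for item~(\ref{item:main1b}) the jump $\jump$ induced by (a representative of) ${\sf LLPO}$, for~(\ref{item:main2b}) the jump induced by ${\sf RDIV}$, for~(\ref{item:main3b}) by ${\sf BE}$, for~(\ref{item:main4b}) by the Weihrauch join ${\sf RDIV}\vee{\sf BE}$, and for~(\ref{item:main5b}) by ${\sf IVT}$. A preliminary step is to check that each of these five Weihrauch degrees is (or can be replaced by an equivalent representative that is) idempotent in the sense required by the main construction, so that the general soundness theorem applies; this is classical for ${\sf LLPO}$, ${\sf IVT}$, and ${\sf WKL}$, and I would verify it for ${\sf RDIV}$, ${\sf BE}$, and their join via a direct analysis of parallelization.

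Once the jump $\jump$ is fixed, the positive realizations should follow from soundness: to realize a non-constructive principle $P$ inside the model, it suffices to exhibit a Weihrauch reduction from (the multi-valued function representing) $P$ to $\jump$, which by design of $\jump$ is immediate in each of the five items. The negative realizations form the substantive half. To realize $\neg P$ I would produce, for each potential realizer-index, an internally-definable instance of $P$ whose solutions cannot be uniformly computed from $\jump$. For instance, $\neg{\sf BE}$ in items~(\ref{item:main1b}) and~(\ref{item:main2b}) reduces to the strict non-reducibilities ${\sf BE}\not\leqW{\sf LLPO}$ and ${\sf BE}\not\leqW{\sf RDIV}$; $\neg{\sf RDIV}$ in items~(\ref{item:main1b}) and~(\ref{item:main3b}) to ${\sf RDIV}\not\leqW{\sf LLPO}$ and ${\sf RDIV}\not\leqW{\sf BE}$; $\neg{\sf IVT}$ in item~(\ref{item:main4b}) to ${\sf IVT}\not\leqW{\sf RDIV}\vee{\sf BE}$; and $\neg{\sf WKL}$ in~(\ref{item:main5b}) to ${\sf WKL}\not\leqW{\sf IVT}$. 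These non-reducibilities are exactly the strict arrows in the diagram of Section~\ref{sec:between-llpo-wkl}.

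The main obstacle I foresee is not in picking the Weihrauch degrees --- the diagram already supplies all the non-reducibilities needed --- but in the translation between Weihrauch non-reducibility and the failure of internal realizability. Refuting a $\forall\exists$-statement under the Lifschitz-style realizability of \cite{ChRa12} adapted to $\jump$ requires exhibiting, for every index, an instance on which the $\jump$-computation fails; this demands a uniform form of non-reducibility, slightly stronger than pointwise non-reducibility on the Weihrauch side. My plan therefore culminates in a single master lemma asserting that, for any idempotent jump $\jump$ and any $P$ with $P\not\leqW\jump$ in the appropriate uniform sense, the internal $\forall\exists$-statement corresponding to $P$ is not realized. Given this lemma --- which I expect to prove once, by direct inspection of the Lifschitz-style realizability clauses for $\rightarrow$ and $\exists$ composed with the jump --- the five items become routine, each obtained by pairing the lemma with the positive realizations and the corresponding Weihrauch separation.
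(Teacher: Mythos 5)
Your proposal has the right outline at the highest level, but it misidentifies where the hard work lies, in a way that would leave the proof with a serious gap. The key issue is that a Weihrauch degree such as ${\sf RDIV}$, ${\sf BE}$, or ${\sf LLPO}$ is not closed under the compositional product $\star$, so it is not an idempotent jump operator in the sense of Lemma~\ref{lem:idempotent}. What one must use is the game closure $g^\Game$, which is a strictly larger degree. For instance ${\sf LLPO}^\Game\equiv_{\sf W}{\sf K}_\N$, which is far stronger than ${\sf LLPO}$. This changes the separations you must prove: to realize $\neg{\sf RDIV}$ in the ${\sf LLPO}^\Game$-model you need ${\sf WKL}_{\rm aou}\not\leqcW{\sf K}_\N$, not merely ${\sf RDIV}\not\leqW{\sf LLPO}$; to realize $\neg{\sf BE}$ in the ${\sf RDIV}^\Game$-model you need $\wklQ\not\leqcW({\sf WKL}_{\rm aou})^\Game$, not merely ${\sf BE}\not\leqW{\sf RDIV}$; and so on. These $\Game$-level separations are not supplied by the implication diagram of Section~\ref{sec:between-llpo-wkl}: that diagram records implications, not strict ones, and the strictness at the $\Game$-level (Propositions~\ref{prop:sep-main5}, \ref{prop:sep-main4}, \ref{prop:sep-main6}, \ref{prop:sep-main23}, and \ref{prop:sep-main78}) is precisely the substantive content, each proved by a reduction-game argument via the recursion trick. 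Your phrase ``a uniform form of non-reducibility, slightly stronger than pointwise'' badly underestimates this gap: the difference is not uniformity but one-shot versus arbitrary finite iterated and nested oracle use.

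Two further, smaller issues. First, your proposed ``master lemma'' (if $P\not\leqW\jump$ then the internal $\forall\exists$-statement is unrealized) is false without qualification, because $\jump$-realizability always validates unique choice ${\sf AC}_{\om,\om}!$; as Section~\ref{sec:subsec-lem-coll} notes, this makes ${\sf LPO}$ entail ${\sf WKL}$ in any such model even though ${\sf WKL}\not\leqGW{\sf LPO}$. The translation from a $\Game$-separation to a realizability refutation works only when the sentence in question has the right form and the Weihrauch degree avoids the parallelization pitfall (Section~\ref{sec:internal-Baire}); the paper handles this by choosing degrees below ${\sf WKL}$ and by a careful recovery-of-witnesses argument inside the model. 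Second, for item~(\ref{item:main4b}) you take the join ${\sf RDIV}\vee{\sf BE}$, where the paper uses the product $({\sf WKL}_{\leq 2}\times{\sf WKL}_{\rm aou})^\Game$; under $\Game$-closure these coincide, but the relevant separation you would then need is ${\sf IVT}\not\leqcW({\sf WKL}_{\leq 2}\times{\sf WKL}_{\rm aou})^\Game$, which is genuinely harder than any single-shot separation and is what Proposition~\ref{prop:sep-main78} establishes.
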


This solves the Ishihara-Nemoto problem.

\section{Background and key ideas}

\subsection{Constructive reverse mathematics}

We need benchmark principles to measure the complexity of mathematical theorems.
In constructive reverse mathematics, one such benchmark is the arithmetical hierarchy of the law of excluded middle.

\subsubsection{Law of excluded middle}\label{sec:subsec-lem-coll}

We here consider central nonconstructive principles, the {\em law of excluded middle} {\sf LEM}: $P\lor\neg P$, the {\em double negation elimination} {\sf DNE}: $\neg\neg P\to P$, and {\em de Morgan's law} {\sf DML}: $\neg(P\land Q)\to\neg P\lor \neg Q$.

For each principle ${\sf L}\in\{{\sf LEM},{\sf DNE},{\sf DML}\}$, let $\Gamma$-${\sf L}$ denote the principle ${\sf L}$ restricted to $\Gamma$-formulas $P,Q$.
Since what deal with in this paper is a principle on the reals, we also deal with the hierarchy of the law of excluded middle that takes (expressions of) reals as parameters.
When emphasizing this, the principle ${\sf L}$ is written as ${\sf L}_\mathbb{R}$.
For example, $\Sigma^0_1$-${\sf LEM}_\mathbb{R}$ is the scheme $\varphi(\bar{x})\lor\neg\varphi(\bar{x})$ for $\Sigma^0_1$ formulas $\varphi(\bar{x})$ with real parameters $\bar{x}$.

More precisely, we examine the arithmetical hierarchy for these nonconstructive principles (cf.~Akama et al.~\cite{Aka}) for formulas of second order arithmetic with function variables $\bar{v}$ (i.e, variables from $\N^\N$).
For instance, we consider the class $\Sigma^0_n$ of formulas  of the form 
\[\exists a_1\forall a_2\dots {\sf Q}a_n R(a_1,a_2,\dots,a_n,\bar{v}),\]
where ${\sf Q}=\exists$ if $n$ is odd, ${\sf Q}=\forall$ if $n$ is even, each $a_i$ ranges over $\N$, and $R$ is a decidable formula.
Note that every quantifier-free formula is decidable.
These principles are also described as nonconstructive principles on the regular reals $\mathbb{R}$.
For instance:
\begin{itemize}
\item $\Sigma^0_1\dner$ is known as the {\em Markov principle} ${\sf MP}$, which is also equivalent to the following statement: For any regular Cauchy reals $x,y\in\R$, if $y\not=0$ then you can divide $x$ by $y$; that is, there is a regular Cauchy real $z\in\R$ such that $x=yz$. 
\item $\Sigma^0_1\dmlr$ is equivalent to the {\em lessor limited principle of omniscience} ${\sf LLPO}$, which states that for any regular Cauchy reals $x,y\in\mathbb{R}$, either $x\leq y$ or $y\leq x$ holds.
\item $\Pi^0_1\lemr$ is equivalent to the {\em limited principle of omniscience} ${\sf LPO}$, which states that for any regular Cauchy reals $x,y\in\mathbb{R}$, either $x=y$ or $x\not=y$ holds.
\item $\Pi^0_1\lemr\leftrightarrow\Sigma^0_1\lemr$ holds under Markov's principle.
\item $\Sigma^0_2\dmlr$ is equivalent (assuming Markov's principle, cf.~Proposition \ref{prop:dmlr-rt12}) to the {\em pigeonhole principle} for infinite sets (or ${\sf RT}^1_2$ in the context of reverse mathematics).
\end{itemize}

%

The arithmetical hierarchy of the law of excluded middle is first studied in the context of limit computable mathematics (one standpoint of constructive mathematics which allows {\em trial and errors} \cite{HaNa02}, which may be considered as a special case of a realizability interpretation of constructive mathematics w.r.t.~a suitable partial combinatory algebra, see also Section \ref{sec:realizability-pca}).
After that, the arithmetical hierarchy of law of excluded middle became extensively studied in various contexts; see e.g.~\cite{Aka,Ber06,BeSt17,FuKu21,Fuku22,Nakata}.

\begin{example}
The equivalence between $\Sigma^0_2\dner$ and Hilbert's basis theorem (Dickson's lemma), see \cite[Section 5]{HaNa02}, can also be considered as an early result in constructive reverse mathematics.

In \cite{BeSt17}, it is shown that some variant of $\Sigma^0_3$-${\sf DML}_\R$ is equivalent to Ramsey's theorem for computable coloring of pairs, ${\sf RT}^2_2(\Sigma^0_0)$, over Heyting arithmetic ${\bf HA}$.
\end{example}

\begin{figure}[t]
\[{\small
\xymatrix{
	&	\Sigma^0_2\lemr \ar[ddl] \ar[dr] \ar@/^1.8pc/[rrrdd]	&	&	&		{\sf WKL} \ar[dd]	\\
	& 	&	\Pi^0_2\lemr	\ar[d] 	&	&		\\
\Sigma^0_2\dner\ar[dr] \ar@/^1.8pc/[rrrdd]	&	&	\Sigma^0_2\dmlr \ar[dl]|\hole	&		&	{\sf IVT}\ar[dd]\ar[ddl]	\\
	&	\Delta^0_2\lemr \ar[d] & &  &\\
	&	\Sigma^0_1\lemr \ar[dr]\ar[ddl] & & {\sf IVT}_{\rm lin} \ar[d]\ar[dr] & {\sf BE} \ar[d]\\
	&	& \Pi^0_1\lemr \ar[r] \ar[d] &   {\sf RDIV} \ar[dr] & {\sf BE}_\mathbb{Q}\ar[d] \\
\Sigma^0_1\dner\ar[dr] & & \Sigma^0_1\dmlr\ar[rr]\ar[dl] & & {\sf LLPO}\ar[ll] \\
	& \mbox{base system} & & & 
}}
\]
\caption{Nonconstructive principles over a base intuitionistic system}\label{figure:principles-over-izf}
\end{figure}

For the relationship among non-constructive principles, see also Figure \ref{figure:principles-over-izf}, where ${\sf IVT}_{\sf lin}$ and ${\sf BE}_\mathbb{Q}$ are some auxiliary (less important) principles introduced in Section \ref{sec:Auxiliary}.
We explain some nontrivial implications:
For $\Sigma^0_2\lemr\to{\sf IVT}$, the standard proof of the intermediate value theorem employs the nested interval argument with the conditional branching on whether $f(x)=0$ for some $x\in\mathbb{Q}$.
As $\mathbb{Q}$ is $\Sigma^0_2$, this shows that $\Sigma^0_2\lemr$ implies ${\sf IVT}$, and hence ${\sf BE}$.
For $\Pi^0_1\lemr\to{\sf RDIV}$, one can decide if $y$ is zero or not by using $\Pi^0_1\lemr$.
If $y=0$ then put $z=0$; otherwise define $z=x/y$.
Thus, $\Pi^0_1\lemr$ implies ${\sf RDIV}$.
For other nontrivial implications, see Section \ref{section:implications}.

\subsection{Function presentations of theorems}

\subsubsection{Theorems as multivalued functions}
A simple, but important, observation is that most principles which appear in Theorem \ref{thm:main-theorem} are of the $\forall\exists$-forms.
\[\varphi\equiv\forall x\in X\ [\eta(x)\to\exists y\in Y.\,\theta(x,y)]\]

Here, $\eta$ and $\theta$ may also involve quantifiers.
The basic idea is to regard this type of formula $\varphi$ as a (multivalued) function $F_\varphi$ that takes $x$ as input and outputs $y$.
In other words, a (possibly false) statement $\varphi\equiv\forall x\in X\;[\eta(x)\rightarrow\exists y\in Y.P(x,y)]$ is transformed into the following $F_\varphi$:
\begin{align*}
{\rm dom}(F_\varphi)&=\{x\in X:\eta(x)\}\\
F_\varphi(x)&=\{y\in Y:\theta(x,y)\}
\end{align*}

Think of $F_\varphi$ as a multivalued function:
Each $y\in F_\varphi(x)$ is regarded as an output of $F_\varphi(x)$.
In this way, we view each $\forall\exists$-principle $\varphi$ as a {\em partial multi-valued function} ({\em multifunction}).
Here, we consider formulas as partial multifunctions rather than relations in order to distinguish a {\em hardest} instance $F_\varphi(x)=\emptyset$ (corresponding to a {\em false sentence}), and an {\em easiest} instance $x\in X\setminus{\dom}(F_\varphi)$ (corresponding to a {\em vacuous truth}).
Only if $F_\varphi(x)$ is a singleton for any $x\in{\rm dom}(F_\varphi)$, we think of $F_\varphi$ as a single-valued function.
We use the symbol $F\pcolon X\tto Y$ to mean that $F$ is a partial multifunction from $X$ to $Y$.

The basic idea for measuring the complexity of a theorem $\varphi$ is to analyze how complicated the function $F_\varphi$ is.
In a mathematical system based on constructive logic, the complexity of an $\forall\exists$-theorem can be predicted to be dominated by the complexity of constructing an existential witness.
Therefore, it seems appropriate to measure the complexity of a {\em choice} function $f$ of $F_\varphi$, i.e., a function $f$ such that $f(x) \in F_\varphi(x)$ for all $x \in {\rm dom}(F_\varphi)$.
This is a function choosing an existential witness for $\varphi$.

\subsubsection{Degrees of discontinuity}\label{sec:degree-discontinuity}

If $X,Y$ are topological spaces, we can discuss whether $F_\varphi$ has a continuous choice function, or if not, how discontinuous a choice function is.
All principles mentioned above, except for Markov's principle, are {\em discontinuous} principles; that is, there are no continuous choice functions for these principles.
Thus, all of these principles fail in a well-known model of ${\bf IZF}$ (or a topos) in which all functions on $\om^\om$ are continuous.
Here, it is important to note that, for example, a statement that ``for any regular Cauchy real $\ldots$'' refers to the space of rational sequences (which is zero-dimensional) as the domain.

\begin{example}
The law of excluded middle $\Pi^0_2\lemr$ for $\Pi^0_2$ formulas (as a single-valued function) clearly corresponds to {\em Dirichlet's nowhere continuous function $\chi_\mathbb{Q}$}.
\end{example}

\begin{example}
For the binary expansion principle ${\sf BE}$, any total binary expansion $\mathbb{R}\to 2^\om$ is discontinuous, and indeed, there is no continuous function which, given a regular Cauchy representation of a real, returns its binary expansion.
\end{example}

\begin{example}
As already mentioned, some proof of the intermediate value theorem ${\sf IVT}$ can be given by determining whether a real is rational or not, i.e., using Dirichlet's function, and the rest can be done by continuous reasoning.
However, the standard proof of ${\sf IVT}$ is more reminiscent of Thomae's function than Dirichlet's function.
\[
{\rm Thomae}(x)=
\begin{cases}
\frac{1}{q} & \mbox{($x$ is of the form $\frac{p}{q}$ for coprime $p,q$)}\\
0 & \mbox{($x$ is irrational)}
\end{cases}
\]

Thomae's function has the property of being {\em discontinuous on the rationals} and {\em continuous on the irrationals}.
The proof of ${\sf IVT}$ (i.e., for any continuous $f$, if $f(0)f(1)<0$ then $f(x)=0$ for some $x\in[0,1]$) is given by the so-called interval-halving method, but if a solution $x$ is a dyadic rational $m/2^{n}$, the process terminates in a finite number of steps discontinuously, and if $x$ is a dyadic irrational, the process continues continuously forever.
In other words, the standard proof of ${\sf IVT}$ has the same property as Thomae's function, which is {\em discontinuous on the dyadic rationals} and {\em continuous on the dyadic irrationals}.

Indeed, using this argument, one can formally prove that an upper bound on the discontinuity of ${\sf IVT}$ is given by Thomae's function (see \cite{KiPa19}).
Of course, the discontinuity of the standard proof is only as complicated as that of Thomae's function, so there may be other proofs with lower discontinuity, making this merely an upper bound.
\end{example}

There are differences among the {\em degrees of discontinuity} for these principles.

\begin{remark}
From the end of the 19th century to the beginning of the 20th century, mathematical analysts introduced various notions for measuring the degree of discontinuity:
A function is of Baire class $0$ if it is continuous.
A function is of Baire class $n+1$ if it is the pointwise limit of functions of Baire class $n$.
A function is $\Gamma$-measurable if the preimage of an open set is in $\Gamma$.
A function is $\sigma$-continuous if some countable partition of the domain makes it continuous when restricted to each piece.
By the Lebesgue-Hausdorff theorem, on a good topological space (such as the Baire space $\om^\om$), being of Baire $n$ and being $\tpbf{\Sigma}^0_{n+1}$-measurable are equivalent (see e.g.~\cite[Theorem 24.3]{KechrisBook}).
\end{remark}

\begin{example}
Dirichlet's function is $\sigma$-continuous (i.e., decomposable into countably many continuous functions), and of Baire class $2$ (i.e., a double pointwise limit of continuous functions), but not of Baire class $1$.
On the other hand, Thomae's function is of Baire class $1$, so Thomae's function is a more precise upper bound for {\sf IVT}.
\end{example}

\begin{example}
Weak K\"onig's lemma ${\sf WKL}$ has no $\sigma$-continuous choice, but has a choice of Baire class $1$.
Note also that ${\sf WKL}$ (even ${\sf IVT}$) has no $\tpbf{\Delta}^0_2$-measurable choice, where there are functions of Baire class $1$ (equivalently $F_\sigma$-measurable) which are not $\tpbf{\Delta}^0_2$-measurable.
On the other hand, ${\sf RDIV}$ has a $\tpbf{\Delta}^0_2$-measurable choice.
\end{example}

These observations indicate that these principles are all different.
However, one of the questions is, given a level $\mathbf{d}$ of discontinuity, how one can construct a model of constructive mathematics in which the functions on $\om^\om$ are exactly those with discontinuity level at most $\mathbf{d}$.

%
%

\subsubsection{Theorems as realizability problems}

The discussion in Section \ref{sec:degree-discontinuity} can be justified using Kleene's functional realizability interpretation.
Realizability is the act of keeping track of witnesses (e.g., existential witnesses) for the correctness of a formula at all subformula levels (see e.g.~\cite{Tr98}).
For example, a realizer of an $\forall\exists$-theorem $\varphi\equiv\forall x\exists y\theta(x,y)$ contains information about a function which, given $x$, returns an existential witness $y$.
In other words, a realizer always provides a choice function for $F_\varphi$.

There are several variants of realizability.
Roughly speaking, Kleene's number realizability only validates the principles which have computable choices and refutes other non-computable principles, and Kleene's functional realizability validates the principles which have continuously choices and refutes other discontinuous principles.
For the formal definition of realizability, see Section \ref{sec:realizability-interpretation-def}.

Then, can we formalize the discussion in Section \ref{sec:degree-discontinuity} as a variant of realizability?
For example, instead of realizability by continuous functions, one may consider realizability by Baire $1$ functions or $\sigma$-continuous functions.
At least for the former, it is clear that this does not yield a reasonable realizability since Baire $1$ functions are not closed under composition.
Furthermore, while the idea of considering the degrees of discontinuity is very promising, classical notions such as the Baire class are too coarse.
A more useful tool is the notion of reducibility (relative computability/continuity), which has developed in {\em computability theory}.

What is dealt with in computability theory is the degree of difficulty of ``problems.''
We consider an $\forall\exists$-theorem as a problem of finding an existential witness.
In other words, consider a multifunction $F\pcolon X \tto Y$ as the problem of finding at least one solution $y \in F(x)$ for each instance $x\in{\rm dom}(F)$.
Then one can consider the degree of difficulty of the problem $F$.
This is given by a preorder relation $G \leq F$, which roughly means that $F$ is more difficult than $G$ or of equal difficulty.
In other words, $G$ is $F$-relatively solvable (if one knows a solution to $F$, one can also solve $G$).
This computability-theoretic preorder relation provides a much more delicate and precise measure than classical topological notions such as the Baire class.

\subsection{Summary}
Our theoretical goal is to define a notion of realizability relative to a given problem $F$.
The process is as follows: In the following sections, we formalize the notion of $F$-relative computation (in game-theoretic terms; Section \ref{subsec:weihrauch-reducibility}), then introduce the notion of universal computation $F^{\clo}$ for $F$-relative computability (Definition \ref{def:relative-computation-F}), which becomes a $j$-operator (Definition \ref{def:j-operator}), from which we construct a site (Definition \ref{def:site}), and finally, the realizability over the site provides an interpretation of constructive mathematics (Theorem \ref{thm:IZF-realizability}), which moreover corresponds to $F$-relative computability (Lemma \ref{lem:main-lemma}) if $F$ is well-behaved.

At a concrete level, we use powerful techniques on computability-theoretic games to show that there are differences in {\em computability-theoretic strength} between various $\forall\exists$ principles (Theorem \ref{thm:main-sub-GW}).
Then, using our theory, we transfer the computability-theoretic results to separation results over {\bf IZF} and complete the proof of Main Theorem \ref{thm:main-theorem} in Section \ref{sec:main-section-proof}.



\section{Key technical notions}

\subsection{Notation}

Let $\om$ be the set of all natural numbers.
Let $\om^{<\om}$ be the set of all finite strings.
For strings $\sigma,\tau\in\om^{<\om}$, we write $\sigma\preceq\tau$ if $\tau$ extends $\sigma$.
The concatenation of $\sigma$ and $\tau$ is denoted by $\sigma\fr\tau$.
For a string $\langle n\rangle$ of length $1$, the concatenation $\sigma\fr\langle n\rangle$ is often abbreviated as $\sigma\fr n$ or $\sigma n$.
For a string $\sigma$, the length of $\sigma$ is denoted as $|\sigma|$. 
For an infinite sequence $p\in\om^\om$, we use $p\upto n$ to denote the restriction of $p$ up to the length $n$.
A tree is a set $T\subseteq\om^{<\om}$ which is downward closed with respect to $\preceq$; that is, $\tau\preceq\sigma\in T$ implies $\sigma\in T$.
An infinite path through $T$ is an infinite sequence $p\in\om^\om$ such that $p\upto n\in T$ for any $n\in\om$.
We write $[T]$ for the set of all infinite paths through $T$.

The notation $f\pcolon X\to Y$ indicates that $f$ is a partial function from $X$ to $Y$.
A partial continuous function $\Phi\pcolon\om^\om\to\om$ is determined by a partial function $\varphi\pcolon\om^{<\om}\to\om$ such that, for any $f,c$, we have $f\in{\rm dom}(\Phi)$ and $\Phi(f)=c$ iff there is $n\in\om$ such that $f\upto n\in{\rm dom}(\varphi)$ and $\varphi(f\upto n)=c$ for some $n\in\om$.
We may assume that ${\rm dom}(\varphi)$ is upward closed with respect to $\preceq$.
Note that $\sigma,\tau\in{\rm dom}(\varphi)$ and $\sigma\preceq\tau$ implies $\varphi(\sigma)=\varphi(\tau)$.
We often identify $\Phi$ with $\varphi$.
We write $\Phi(\sigma)\downarrow$ if $\sigma\in{\rm dom}(\varphi)$.
Otherwise, we write $\Phi(\sigma)\uparrow$.

The power set of a set $X$ is written as $\mathcal{P}X$.
A partial function $F\pcolon X\to\mathcal{P}Y$ is called a {\em partial multifunction} from $X$ to $Y$, and sometimes written as $F\pcolon X\tto Y$.
If $x\in{\rm dom}(F)$, we sometimes write $F(x)\downarrow$; otherwise, $F(x)\uparrow$.

\subsection{Weihrauch reducibility}

The main tool for measuring the degrees of difficulty of partial multifunctions in this article is Weihrauch reducibility (see e.g.~\cite{pauly-handbook}).
Below we assume that ${\bf X}$ is either $\om$ or $\om^\om$.

\subsubsection{Weihrauch reducibility}\label{subsec:weihrauch-reducibility}


Let $F,G\colon{\bf X}\tto{\bf X}$ be partial multifunctions.
We say that {\em $F$ is Weihrauch reducible to $G$} (written $F\leq_{\sf W}G$) if there are partial computable functions $\Phi$ and $\Psi$ such that, for any $x\in{\dom}(F)$, $y\in G(\Phi(x))$ implies $\Psi(x,y)\in F(x)$.
For basics on Weihrauch reducibility, we refer the reader to \cite{pauly-handbook}.
The definition of Weihrauch reducibility $F\leq_{\sf W}G$ can be viewed as the following perfect information two-player game:
\[
\begin{array}{rccccc}
{\rm I}\colon	& x_0\in{\dom}(F)	&		& x_1\in G(y_0)	&		\\
{\rm II}\colon	&		& y_0\in{\dom}(G)	&		& y_1\in F(x_0)	
\end{array}
\]

More precisely, each player chooses an element from ${\bf X}$ at each round, and Player II wins if there is a computable strategy $\tau$ for II which yields a play described above.
Note that $y_0$ depends on $x_0$, and $y_1$ depends on $x_0$ and $x_1$, and moreover, a computable strategy $\tau$ for II yields partial computable maps $\Phi\colon x_0\mapsto y_0$ and $\Psi\colon (x_0,x_1)\mapsto y_1$.
Usually, $\Phi$ is called an {\em inner reduction} and $\Psi$ is called an {\em outer reduction}.
If reductions $\Phi$ and $\Psi$ are allowed to be continuous (when ${\bf X}=\om^\om$), then we say that {\em $F$ is continuously Weihrauch reducible to $G$} (written $F\leqcW G$).

In (classical or intuitionistic) logic, when showing $P\to Q$, the contraction rule allows us to use the antecedent $P$ more than once.
However, the above reducibility notion essentially requires us to show $Q$ by using $P$ only once.
To overcome this difficulty, there is an operation $H\star G$ on partial multifunctions $G,H$ which allows us to use $H$ after the use of $G$.
For partial multi-valued functions $G$ and $H$, define the composition $H\circ G$ as follows:
\[H\circ G(x)=
\begin{cases}
\bigcup\{H(y):y\in G(x)\}&\mbox{ if }G(x)\downarrow\subseteq{\dom}(H),\\
\uparrow&\mbox{ otherwise}.
\end{cases}
\]

Note that the above definition of the composition of partial multifunctions is different from the usual definition of the composition of relations (so the partial multifunctions are completely different from the relations).
Then it is shown that $\min_{\leq_{\sf W}}\{H_0\circ G_0:G_0\leq_{\sf W}G\;\land\;H_0\leq_{\sf W}H\}$ always exists, and a representative is denoted by $H\star G$; see Brattka-Pauly \cite{paulybrattka4}.
There is an explicit description of a representative $H\star G$, and it is indeed needed to prove results on $\star$.
To explain the explicit definition, we consider the following play for ``$F\leq_{\sf W}H\star G$'':
\[
{\small
\begin{array}{rccccccc}
{\rm I}\colon	& x_0\in{\dom}(F)	&		& x_1\in G(y_0)	&		& x_2\in H(y_1) \\
{\rm II}\colon	&		& y_0\in{\dom}(G)	&		& y_1\in {\dom}(H)	& & y_2\in F(x_0)
\end{array}
}
\]

Again, a computable strategy for Player II yields functions $y_0=\Phi_0(x_0)$, $y_1=\Phi_1(x_0,x_1)$, and $y_2=\Psi(x_0,x_1,x_2)$.
That is, II's strategy yields two inner reductions $\Phi_0,\Phi_1$ and an outer reduction $\Psi$, where $\Phi_0$ makes a query to $G$, and then, after seeing a result $x_1\in G(y_0)$, the second reduction $\Phi_1$ makes a query to $H$.

Now, the partial multifunction $H\star G$ takes, as an input, Player I's first move $x_0$ and a code for Player II's strategy $\tau$ (a code for $\langle \Phi_0,\Phi_1,\Psi\rangle$) such that II's play according to the strategy $\tau$ obeys the rule unless I's play violates the rule at some previous round, where we say that Player I obeys the rule if $x_1\in G(y_0)$ and $x_2\in H(y_1)$, and Player II obeys the rule if $y_0\in{\dom}(G)$ and $y_1\in{\dom}(H)$.
Then, $H\star G(x_0,\tau)$ returns $\Psi(x_0,x_1,x_2)$ for I's some play $(x_1,x_2)$ which obeys the rule.
Note that there are many possible values for $\Psi(x_0,x_1,x_2)$, which means that $H\star G$ is multi-valued.
Without loss of generality, one can remove $x_0$ from the definition of $H\star G$ by considering a continuous strategy $\tau$.

This operation $(G,H)\mapsto H\star G$ is called the {\em compositional product}; see also Brattka-Pauly \cite{paulybrattka4}.
For game-theoretic descriptions, see also Hirschfeldt-Jockusch \cite{HiJo16} and Goh \cite{Goh19}.
This idea extends to reduction games introduced below.


\subsubsection{$\star$-closure and reduction games}\label{sec:def-game-Weihrauch}

In order to express ``arbitrary use of an antecedent,'' Hirschfeldt-Jockusch \cite{HiJo16} introduced {\em reduction games} and {\em generalized Weihrauch reducibility}.

\begin{definition}
For partial multifunctions $F,G\pcolon{\bf X}\rightrightarrows{\bf X}$, let us consider the following perfect information two-player game $\mathfrak{G}(F,G)$:
\[
\begin{array}{rccccccc}
{\rm I}\colon	& x_0	&		& x_1	&		& x_2	&	& \dots \\
{\rm II}\colon	&		& y_0	&		& y_1	& 		& y_2	& \dots
\end{array}
\]

Each player chooses an element from ${\bf X}$ at each round.
Here, Players I and II need to obey the following rules.
\begin{itemize}
\item First, Player I chooses $x_0\in{\dom}(F)$.
\item At the $n$th round, Player II reacts with $y_n=\langle a,u_n\rangle$.
\begin{itemize}
\item The choice $a=0$ indicates that Player II makes a new query $u_n$ to $G$.
In this case, we require $u_n\in{\dom}(G)$.
\item The choice $a=1$ indicates that Player II declares victory with $u_n$.
\end{itemize}
\item At the $(n+1)$th round, Player I responds to the query made by Player II at the previous stage.
This means that $x_{n+1}\in G(u_n)$.
\end{itemize}

Then, {\em Player II wins the game $\mathfrak{G}(F,G)$} if either Player I violates the rule before Player II violates the rule or Player II obeys the rule and declares victory with $u_n\in F(x_0)$.

Hereafter, we require that Player II's moves are chosen in a continuous manner (if ${\bf X}=\om^\om$).
In other words, Player II's strategy is a code $\tau$ of a partial {\em continuous} function $h_\tau\pcolon{\bf X}^{<\om}\to 2\times{\bf X}\simeq{\bf X}$.
On the other hand, Player I's strategy is any partial function $\sigma\pcolon{\bf X}^{<\om}\to{\bf X}$ (which is not necessarily continuous).
Given such strategies $\sigma$ and $\tau$ yield a play $\sigma\otimes\tau$ in the following manner:
\begin{align*}
(\sigma\otimes\tau)(0)&=\sigma(\langle\rangle),\\
(\sigma\otimes\tau)(2n+1)&=h_\tau(\langle(\sigma\otimes\tau)(2m)\rangle_{m\leq n}),\\
(\sigma\otimes\tau)(2n+2)&=\sigma(\langle(\sigma\otimes\tau)(2m+1)\rangle_{m\leq n}).
\end{align*}

Player II's strategy $\tau$ is {\em winning} if Player II wins along $\sigma\otimes\tau$ whatever Player I's strategy $\sigma$ is.
We say that $F$ is {\em game-Weihrauch reducible to $G$} (written $F\leq_{\gW}G$) if Player II has a computable winning strategy for $\mathfrak{G}(F,G)$.
We write $F\leq_{\gW}^cG$ if Player II has a (continuous) winning strategy for $\mathfrak{G}(F,G)$.
\end{definition}

Hirschfeldt-Jockusch \cite{HiJo16} showed that the relation $\leq_{\gW}$ is transitive.

\begin{remark}
In a traditional context, the notion $\leq_{\gW}$ is also called generalized Weihrauch reducibility.
However, this notion $\leq_{\gW}$ should be considered to be the most fundamental reducibility notion, which performs a relative computation using a finite number of queries to an oracle, and perhaps a {\em simpler name} would be preferable.

If $f,g$ are total functions on $\om$, then one can see that $f\leq_{\gW}g$ if and only if $f$ is Turing reducible to $g$.
For this reason, $\leq_{\gW}$ is called Turing reducibility in \cite{Kih23}.
\end{remark}

Note that the rule of the above game does not mention $F$ except for Player I's first move.
Hence, if we skip Player I's first move, we can judge if a given play follows the rule without specifying $F$.

\begin{definition}\label{def:relative-computation-F}
For a partial multifunction $G\pcolon{\bf X}\tto{\bf X}$, we define $G^{\clo}\pcolon{\bf X}\tto{\bf X}$ as follows:
\begin{itemize}
\item $(x_0,\tau)\in{\dom}(G^{\clo})$ if and only if $\tau$ is Player II's strategy, and for Player I's any strategy $\sigma$ with first move $x_0$, if Player II declares victory at some round along the play $\sigma\otimes\tau$.
\item Then, $u\in G^{\clo}(x_0,\tau)$ if and only if Player II declares victory with $u$ at some round along the play $\sigma\otimes\tau$ for some $\sigma$ with first move $x_0$.
\end{itemize}
\end{definition}

Here, the statement ``Player II declares victory'' does not necessarily mean ``Player II wins''.
Indeed, the above definition is made before $F$ is specified, so the statement ``Player II wins'' does not make any sense.
Again, one can remove $x_0$ from an input for $G^{\clo}$ by considering a continuous strategy. 
The following is obvious by definition.

\begin{obs}[see \cite{NePa18}]
$F\leq_{\sf W}G^{\clo}\iff F\leq_{\gW}G$.
\end{obs}

\begin{remark}
We may think of $G^{\clo}$ as a {\em universal machine} for $G$-relative computation.
The relative computation process is displayed as a game, a program is Player II's strategy, and an input is Player I's first move.
This means that $G^{\clo}(x_0,\sigma)$ is the result of the computation of a program $\sigma$ with an oracle $G$ and an input $x_0$.
Using the notation of traditional computability theory, it would be appropriate to write $\Phi_{\sigma}^G(x_0):=G^{\clo}(x_0,\sigma)$.
\end{remark}

Note that Neumann-Pauly \cite{NePa18} used generalized register machines to define this notion $G^\dia$.
Neumann-Pauly \cite{NePa18} showed that $F\leq_{\sf W}G^\dia$ if and only if $F\leq_{\gW}G$.
By transitivity of $\leq_{\gW}$, we have $F^{\clo}\star F^{\clo}\equiv_{\sf W}F^{\clo}$.
Moreover, Westrick \cite{westrick2020note} showed that $\dia$ gives the {\em least fixed point} of the operation $F\mapsto F\star F$ whenever $F\geq_{\sf W}\mathbf{1}$, where $\mathbf{1}$ is the Weihrauch degree of the identity function on ${\bf X}$.
More precisely, if $\mathbf{1}\leq_{\sf W} F$ then $F\leq_{\sf W}F\star F$ implies $F^{\clo}\leq_{\sf W}F$.
For more details on the closure operator $\dia$, see Westrick \cite{westrick2020note}.

\subsubsection{Tree representation of reducibility}
If the reader understand the definitions in Section \ref{sec:def-game-Weihrauch}, they may skip this section.
However, one may find the definition using games cumbersome, so let us also provide a definition using a tree.
If a strategy of one Player ${\sf A}$ is fixed, the possible moves of the other Player ${\sf B}$ form a tree.
Here, the moves of Player ${\sf A}$ following this fixed strategy are labeled on each node.

\begin{definition}[via tree]\label{def:strategy-tree}
%
A labeled well-founded tree is a pair of a well-founded tree $T\subseteq{\bf X}^{<\om}$ and a function $\nu\colon T\to{\bf X}$.
A {\em $G$-strategy tree} is a labeled well-founded tree $(T,\nu)$ such that, either $\sigma\in T$ is a leaf, or else, $\nu(\sigma)\in{\rm dom}(G)$, and the set of all immediate successors of $\sigma$ is given by $G(\nu(\sigma))$; that is, $G(\nu(\sigma))=\{x\in{\bf X}^{<\om}:\sigma x\in T\}$.
%
\end{definition}

Alternatively, one can inductively define the collection $\mathcal{S}$ of {\em $G$-strategy trees} as follows:
\begin{enumerate}
\item Each $x\in{\bf X}$ is in $\mathcal{S}$; where $x$ represents a singleton-tree $\{\ep\}$ labeled by $\nu(\ep)=x$.
\item Each $(z,\tau)\in\sum_{x\in\dom(G)}\mathcal{S}^{G(x)}$ is in $\mathcal{S}$, where $(z,\tau)$ represents a labeled tree $(T,\nu)$ such that the root $\ep\in T$ is labeled by $z$, and for $y\in G(z)$ and $\tau(y)=(T_y,\nu_y)$, if $\sigma\in T_y$ then $y\fr\sigma\in T$ and $\nu(y\fr\sigma)=\nu_y(\sigma)$.
\end{enumerate}

\begin{definition}\label{def:legal-tree}
A strategy is a pair $(\hat{t},\hat{\nu})$ of partial continuous functions $\hat{\nu}\pcolon{\bf X}^{<\om}\to{\bf X}$ and $\hat{t}\pcolon{\bf X}^{<\om}\to 2$.
A strategy $(\hat{t},\hat{\nu})$ is {\em $G$-legal} if there is a (unique) $G$-strategy tree $(T,\nu)$ such that $\nu$ is a restriction of $\hat{\nu}$, $T\subseteq{\rm dom}(\hat{t})$ and for any $\sigma\in T$, $\hat{t}(\sigma)=1$ if and only if $\sigma$ is a leaf.
\end{definition}

Note that $(\hat{t},\hat{\nu})$ corresponds to Player II's strategy in game-theoretic terms, and $(T,\nu)$ corresponds to the tree of all possible plays by Player I with respect to Player II's fixed strategy $(\hat{t},\hat{\nu})$.
Below, we write $f_x(\alpha)=f(x\fr\alpha)$.

\begin{obs}
Let $F,G\pcolon{\bf X}\tto{\bf X}$ be partial multifunctions.
Then $F\leq_{\gW}G$ iff there is a computable strategy $(\hat{t},\hat{\nu})$ such that, for any $x\in{\rm dom}(F)$, $(\hat{t}_x,\hat{\nu}_x)$ is $G$-legal, and each root of the corresponding $G$-strategy tree $(T_x,\nu_x)$ is labeled by some $y\in F(x)$.
\end{obs}


\begin{remark}
The idea of describing $\gW$-reducibility via the inductive definition of trees is close to the definition of what Lee-van Oosten \cite{Lee,LvO13} called a sight.
They introduced this notion of reducibility in a different context from Weihrauch reducibility, but it is interesting to note that they provided this definition earlier than Hirschfeldt-Jockusch \cite{HiJo16} and Neumann-Pauly \cite{NePa18}.

However, Lee-van Oosten's original definition is given under a more extended setting and is therefore much more complicated than what is described here.
For this reason, its relevance to Weihrauch reducibility was not clear at all.
In fact, until pointed out by the author \cite{Kih23}, it had not been discovered that their definition could be linked to the context of Weihrauch reducibility.
\end{remark}

\subsubsection{Recursion trick}\label{sec:recursion-trick}

To prove various separation results on Weihrauch reducibility, {\em Kleene's recursion theorem} has been frequently used as a simple, but very strong, proof machinery, cf.~\cite{HiJo16,pauly-kihara2-mfcs,KiPa19}.
We employ this machinery throughout this article, so we separate this argument, and call it {\em recursion trick}.
In Kihara-Pauly \cite{pauly-kihara2-mfcs}, recursion trick is described in the context of two player games, where Player I is called {\sf Pro} (the proponent) and II is called {\sf Opp} (the opponent) because our purpose is to show Weihrauch separation $\not\leq_{\sf W}$.
The formal description of recursion trick for short reduction games is described in Kihara-Pauly \cite{KiPa19}.

We first describe the idea of recursion trick in a slightly longer reduction game, i.e., for $F\leq_{\sf W}G\star H$.
Note that, in this kind of reduction, any play of the reduction game ends at the third round of Player II:
\[
\begin{array}{rcccccc}
{\rm I}\colon	& x_0	&		& x_1	&		& x_2	&	\\
{\rm II}\colon	&		& y_0	&		& y_1	&		& y_2
\end{array}
\]

If Player II's strategy is given, we may consider that the value of $y_i$ depends on Player I's play; that is, $y_0=y_0(x_0)$, $y_1=y_1(x_0,x_1)$, and $y_2=y_2(x_0,x_1,x_2)$.
If we describe an algorithm constructing $x_0'$ from $(y_0,y_1,y_2)$ (with a parameter $x_0$), by Kleene's recursion theorem, the parameter $x_0$ can be interpreted as {\em self-reference}; that is, we may assume that $x_0'=x_0$.
Hence, one can remove $x_0$ from the list of parameters in $y_0$, $y_1$ and $y_2$.
In summary, to show that $F\not\leq_{\sf W}G\star H$, given $y_0,y_1,y_2$, it suffices to construct $x_0$ such that Player I wins along the following play
\[
\begin{array}{rcccccc}
{\rm I}\colon	& x_0	&		& x_1	&		& x_2	&	\\
{\rm II}\colon	&		& y_0	&		& y_1(x_1)	&		& y_2(x_1,x_2)
\end{array}
\]
for some $x_1$ and $x_2$, that is, Player II violates the rule before I violates the rule, or Player I obeys the rule and $y_2\not\in F(x_0)$.
The essence of recursion trick is that {\em it allows us to construct Player I's first move $x_0$ later than II's moves $y_0,y_1,y_2$}.
However, note that I's move $x_0$ has to obey the rule even if II violates the rule.

In general reduction games, II's strategy yields $y_n(x_0,x_1,\dots,x_n)$.
Again we describe an algorithm $(y_n)_{n\in\om}\mapsto x_0'$, and by Kleene's recursion theorem, consider $x_0$ as self-reference.
As before, this allows us to construct Player I's first move $x_0$ later than II's moves $(y_n)_{n\in\om}$, but I's move $x_0$ has to obey the rule even if II violates the rule.

\subsubsection*{Formal description of recursion trick:}
For ${\bf X}=\om^\om$, we rigorously describe what recursion trick is.
Fix an oracle $Z$. 
Given Player II's $Z$-computable strategy $\tau$, we get a uniform sequence of $Z$-computable functions $y^\tau_n(x_0,x_1,\dots,x_n)$ which determine the $n$th move of Player II.
Now, assume that Player I's first move is the $e$th partial $Z$-computable function $\varphi_e^Z$ on $\om$.
Then, the reduction game proceeds as follows:
\[
\begin{array}{rccccccc}
{\rm I}\colon	& \varphi_e^Z	&		& x_1	&		& x_2	&	& \dots \\
{\rm II}\colon	&		& y_0^\tau(\varphi_e^Z)	&		& y_1^\tau(\varphi_e^Z,x_1)	&		& y_2^\tau(\varphi_e^Z,x_1,x_2) &\dots
\end{array}
\]

Here, each move in the above play may be a {\em partial} function on $\om$, while each move in the original reduction game is a {\em total} function on $\om$ (i.e., an element of ${\bf X}=\om^\om$).

Assume that we have constructed a $Z$-computable algorithm $e\mapsto x_0(e)$ constructing Player I's first move $x_0(e)$ using information of II's play $\langle\tau(\varphi_e^Z,x_1,\dots,x_n)\rangle_{n\in\om}$.
Clearly, there is a $Z$-computable function $u$ such that $x_0(e)\simeq\varphi_{u(e)}^Z$.
Moreover, assume that
\begin{itemize}
\item[($\star$)]
The move $x_0(e)$ is designed to defeat Player II's strategy $\langle\tau(\varphi_e^Z,x_1,\dots,x_n)\rangle_{n\in\om}$; that is, for any $e$, there exist subsequent moves $(x_1,x_2,\dots)$ such that Player I wins along the following play:
\[
\begin{array}{rccccccc}
{\rm I}\colon	& x_0(e)\simeq\varphi_{u(e)}^Z	&		& x_1	&		& x_2	&	& \dots \\
{\rm II}\colon	&		& y_0^\tau(\varphi_e^Z)	&		& y_1^\tau(\varphi_e^Z,x_1)	&		& y_2^\tau(\varphi_e^Z,x_1,x_2) &\dots
\end{array}
\]
\end{itemize}

Here, we do not need to find such $(x_1,x_2,\dots)$ effectively.
By Kleene's recursion theorem, there exists $r$ such that $x_0(r)\simeq\varphi_{u(r)}^Z\simeq\varphi_r^Z$.
Note that, in the above play with $e=r$, Player II just follows the strategy $\tau$.
By the assumption ($\star$), I's play $(x_0(r),x_1,x_2,\dots)$ defeats Player II who follows $\tau$, which means that $\tau$ is not a winning strategy.

In summary, if for any computable strategy $\tau$ for Player II (in the reduction game $\mathfrak{G}(F,G)$) one can construct a computable algorithm $e\mapsto x_0(e)$ satisfying ($\star$), then we obtain $F\not\leq_{\sf W}G^{\clo}$.
Moreover, if this is possible relative to any oracle $Z$, then we also obtain the continuous Weihrauch separation $F\not\leq_{\sf W}^cG^{\clo}$.
We often describe such a construction $e\mapsto x_0(e)$ as an algorithm describing $x_0(\tilde{\tau})$ from given $\langle\tilde{\tau}(x_1,\dots,x_n)\rangle$, where $\tilde{\tau}(x_1,\dots,x_n)\simeq\tau(\varphi_e^Z,x_1,\dots,x_n)$ for some $e$.

%
%

\subsection{Realizability}

In this section, we construct a realizability predicate from a given Weihrauch degree, and moreover, if a given Weihrauch degree has a good closure property, then we show that the induced realizability predicate validates all axioms of intuitionistic Zermelo-Fraenkel set theory ${\bf IZF}$.

\subsubsection{Partial combinatory algebra}\label{sec:realizability-pca}

In order to give a computability-theoretic model of constructive mathematics, it is convenient to abstract the property of computability as a kind of algebra.

\begin{definition}[see also \cite{vOBook}]
A partial magma is a pair $(M,\ast)$ of a set $M$ and a partial binary operation $\ast$ on $M$.
We often write $xy$ instead of $x\ast y$, and as usual, we consider $\ast$ as a left-associative operation, that is, $xyz$ stands for $(xy)z$.
A partial magma is {\em combinatory complete} if, for any term $t(x_1,x_2,\dots,x_n)$, there is $a_t\in M$ such that $a_tx_1x_2\dots x_{n-1}\downarrow$ and $a_tx_1x_2\dots x_n\simeq t(x_1,x_2,\dots,x_n)$.
For terms $t(x,y)=x$, and $u(x,y,z)=xz(yz)$, the corresponding elements $a_t,a_u\in M$ are usually written as ${\sf k}$ and ${\sf s}$.
A combinatory complete partial magma is called a {\em partial combinatory algebra} (abbreviated as {\em pca}).
\end{definition}

\begin{example}[The algebra of computable functions]\label{exa:first-algebra-definition}
Kleene's first algebra ${\sf K}_1=(\om,\ast)$ is defined by $e\ast n=\varphi_e(n)$, where $\varphi_e$ is the $e$th partial computable function.
Then ${\sf K}_1$ forms a partial combinatory algebra.
The combinatory completeness follows from the so-called smn-theorem.
\end{example}

\begin{example}[The algebra of continuous functions]\label{exa:second-algebra-definition}
Kleene's second algebra ${\sf K}_2=(\om^\om,\ast)$ is defined by $x\ast y=\varphi_e^z(y)$, where $x=e\fr z$ and $\varphi_e$ is the $e$th partial computable function relative to $z\in\om^\om$.
Then ${\sf K}_2$ forms a partial combinatory algebra.
\end{example}

However, this notion is not sufficient for our purposes.
The most basic setting in modern computability theory is not ``computable objects $+$ computable morphisms,'' nor ``topological objects $+$ topological morphisms,'' but ``topological objects $+$ computable morphism \cite{Bau00}.''

\begin{definition}[see also {\cite[Sections 2.6.9 and 4.5]{vOBook}}]
A {\em relative pca} is a triple $\mathbb{P}=({\tt P},\tpbf{P},\ast)$ such that ${\tt P}\subseteq\tpbf{P}$, both $(\tpbf{P},\ast)$ and $({\tt P},\ast\upto {\tt P})$ are pcas, and share combinators ${\sf s}$ and ${\sf k}$.
\end{definition}

In this article, the boldface algebra $\tpbf{P}$ is always the set $\om^\om$ of all infinite sequences.

\begin{example}[The algebra of type-two computable functions]
Kleene-Vesley's algebra is ${\sf K}_2^{\sf eff}=((\om^\om)^{\sf eff},\om^\om,\ast)$, where $\ast$ is the same as ${\sf K}_2$ in Example \ref{exa:second-algebra-definition}, and $(\om^\om)^{\sf eff}$ is the set of all computable sequences.
Then ${\sf K}_2^{\sf eff}$ forms a relative pca.

Note that the functions of the forms $x\mapsto e\ast x$ for some $e\in(\om^\om)^{\sf eff}$ are exactly partial computable functions on $\om^\om$.
\end{example}

\begin{example}
In descriptive set theory, the idea of a relative pca is ubiquitous, which usually occurs as a pair of {\em lightface} and {\em boldface} pointclasses.
A large number of nontrivial, deep, examples of relative pcas have been (implicitly) studied in descriptive set theory \cite{MosBook}:
By the {\em good parametrization lemma} in descriptive set theory (see Moschovakis \cite[Lemma 3H.1]{MosBook}), any $\Sigma^\ast$-pointclass $\Gamma$ --- in particular, any Spector pointclass (see Moschovakis \cite[Lemma 4C]{MosBook}), so many infinitary computation models including infinite time Turing machines (ITTMs) --- yields a (relative) pca, since the good parametrization lemma is, roughly speaking, a generalized smn-theorem, so such a pointclass admits currying.
Here, the {\em partial $\Gamma$-computable function application} form a lightface pca, and the {\em partial $\tpbf{\Gamma}$-measurable function application} form a boldface pca.
For the details, see \cite{Kih22}.
\end{example}

\begin{example}
Let us give some computability/descriptive-set-theoretic examples of relative pcas.

\begin{enumerate}
\item If $\Gamma=\Sigma^0_1$, the induced lightface pca is equivalent to {\em Kleene's first algebra} (associated with Kleene's number realizability), and the boldface pca is {\em Kleene's second algebra} (associated with Kleene's functional realizability).
The relative pca induced from $\Gamma=\Sigma^0_1$ is Kleene-Vesley's algebra.

\item The pointclass $\Sigma^0_1(\emptyset^{(n)})$ is employed by Akama et al.~\cite{Aka} to show that the arithmetical hierarchy of the law of excluded middle does not collapse over Heyting arithmetic ${\bf HA}$.
Here, note that $\Sigma^0_n$ does not yield a pca on $\om^\om$ since $\Sigma^0_n$-computable functions (on $\om^\om$) and $\tpbf{\Sigma}^0_n$-measurable functions are not closed under composition.
In descriptive set theory, Borel pointclass whose associated measurable functions are closed under composition are studied under the name of Borel amenability, cf.~\cite{MRos09}; for instance, the $n$th level Borel functions (a.k.a.~$\Sigma_{n,n}$-functions) are closed under composition, and the induced reducibility notion is studied, e.g.~by \cite{KiSe19}.

\item The pointclass $\Pi^1_1$ is the best-known example of a Spector pointclass, and the induced lightface pca obviously yields hyperarithmetical realizability.
For the boldface pca, the associated total realizable functions are exactly the Borel measurable functions.
If one considers Kleene realizability with such a pca (i.e., one obtained by a Spector pointclass), then it trivially validates $\Sigma^0_n\lemr$ for any $n\in\om$.

\item Bauer \cite{Bau} also studied the pca (and the induced realizability topos) obtained from infinite time Turing machines (ITTMs).
Here, recall that ITTMs form a Spector pointclass.
\end{enumerate}

One may also consider descriptive set theoretic versions of Scott's graph model using lightface/boldface pointclasses.
\end{example}

\subsubsection{Realizability interpretation}\label{sec:realizability-interpretation-def}

One of the most important interpretations of constructive mathematics is the realizability interpretation; see e.g.~\cite{Tr98}.
The most basic Kleene's realizability interpretation is an interpretation of constructive mathematics using partial computable functions on $\om$.
Or, it can be said to be an interpretation using ${\sf K}_1$ (Definition \ref{exa:first-algebra-definition}).
This idea is generalized to an interpretation using arbitrary relative pca.

\begin{definition}\label{def:realizability-relative-to-multifunction}
Let $\mathbb{P}=({\tt P},\tpbf{P},\ast)$ be a relative pca.
Given $a\in\tpbf{P}$ and a formula $\varphi$, the realizability relation $a\Vdash_\mathbb{P}\varphi$ is inductively defined as follows:
\begin{align*}
a\Vdash_\mathbb{P} A&\iff A\mbox{ is true \quad(for atomic $A$).}\\
(a,b)\Vdash_\mathbb{P}\varphi\land\psi&\iff a\Vdash_\mathbb{P}\varphi\mbox{ and }b\Vdash_\mathbb{P}\psi.\\
(i,a)\Vdash_\mathbb{P}\varphi\lor\psi&\iff\mbox{if $i=0$ then $a\Vdash_\mathbb{P}\varphi$ else $a\Vdash_\mathbb{P}\psi$.}\\
e\Vdash_\mathbb{P}\varphi\to\psi&\iff\forall a\in\tpbf{P}\;[a\Vdash_\mathbb{P}\varphi\ \longrightarrow\ ea\Vdash_\mathbb{P}\psi].\\
({\tt x},a)\Vdash_\mathbb{P}\exists x\in X.\varphi(x)&\iff a\Vdash_\mathbb{P}\varphi(x)\mbox{, where ${\tt x}$ is a code of $x\in X$}.\\
e\Vdash_\mathbb{P}\forall x\in X.\varphi(x)&\iff\forall {\tt x}\in\tpbf{P}\;[\mbox{${\tt x}$ is a code of $x\in X$}\;\longrightarrow\;e{\tt x}\Vdash_\mathbb{P}\varphi(x)].
\end{align*}

Then $\varphi$ is $\mathbb{P}$-realizable if $a\Vdash_{\mathbb{P}}\varphi$ for some $a\in{\tt P}$.
Here, we assume that each type $X$ is interpreted as a coded set (an assembly, a represented space, etc.)
\end{definition}

\begin{example}
This notion can clearly be relativized to an oracle.
For instance, if an oracle $\alpha\colon\om\to\om$ is given, instead of the pca ${\sf K}_1=(\om,\ast)$ of computable functions, one can consider the pca $(\om,\ast_\alpha)$ of $\alpha$-computable functions, where $\ast_\alpha$ is the $\alpha$-computable function application.
In general, any relative pca $\mathbb{P}$ can be relativized to a partial oracle $\alpha\pcolon\tpbf{P}\to\tpbf{P}$; see \cite{vO06}.
\end{example}

However, most of the $\forall\exists$-theorems in mathematics are not single-valued (that is, a witness for the existence is not unique).
Therefore, what we really want to consider is the realizability interpretation relative to a partial {\em multi-valued} oracle $\alpha\pcolon\tpbf{P}\tto\tpbf{P}$.

This goes beyond the framework of relative pca:
It is not possible to combine the pair of a relative pca and a multi-valued oracle into a single relative pca, but that is fine.
Rather than collapsing a computation system and an oracle, it is sometimes better to consider them separately for clarity.

We declare that the pair $(\mathbb{P},F)$ of a relative pca $\mathbb{P}$ and a partial multi-valued function $F\pcolon \tpbf{P} \tto \tpbf{P}$ is the fundamental notion.
When the oracle part $F$ satisfies certain conditions, we call $(\mathbb{P},F)$ a {\em site} (see also Definition \ref{def:site}).

We use the notation for relative computation as close to that of ordinary computability theory, as: $\Phi_e^F(x_0)=F^{\clo}(x_0,e)$.
The realizability interpretation is relativized to a multi-valued oracle $F$ as follows.

\begin{definition}\label{def:realizability-relative-to-multifunction2}
Let $\mathbb{P}$ be a relative pca, and $F\pcolon\tpbf{P}\tto\tpbf{P}$ be a partial multifunction.
The realizability relation $a\Vdash_F\varphi$ is inductively defined as follows:
\begin{align*}
e\Vdash_F\varphi\to\psi&\iff\forall a\;[a\Vdash_F\varphi\ \longrightarrow\ {\Phi_e^F(a)\downarrow}
\mbox{ and }(\forall b\in\Phi_e^F(a))\;b\Vdash_F\psi].\\
e\Vdash_F\forall x\in X.\varphi(x)&\iff\forall {\tt x}\;[\mbox{${\tt x}$ is a code of $x\in X$}\\
&\qquad\qquad\longrightarrow\;{\Phi_e^F({\tt x})\downarrow}\mbox{ and }\forall b\in\Phi_e^F({\tt x}).\;b\Vdash_F\varphi({\tt x})].
\end{align*}

For other logical connectives, the definitions are the same as in Definition \ref{def:realizability-relative-to-multifunction}.
\end{definition}

\begin{obs}\label{obs:same-Weihrauch-same-realizability}
$F\equiv_{\gW}G$ then $F$-realizability and $G$-realizability are equivalent.
\end{obs}

Let us connect the relativization to a multivalued oracle with a known realizability interpretation.
If we define ${\bf j}=F^{\clo}$, Definition \ref{def:realizability-relative-to-multifunction2} can be rephrased as follows:
\begin{align*}
e\Vdash_{\bf j}\varphi\to\psi&\iff\forall a\;[a\Vdash_{\bf j}\varphi\ \longrightarrow\ {{\bf j}(e,a)\downarrow}
\mbox{ and }(\forall b\in{\bf j}(e,a))\;b\Vdash_{\bf j}\psi].\\
e\Vdash_{\bf j}\forall x\in X.\varphi(x)&\iff\forall {\tt x}\;[\mbox{${\tt x}$ is a code of $x\in X$}\\
&\qquad\qquad\longrightarrow\;{{\bf j}(e,{\tt x})\downarrow}\mbox{ and }\forall b\in{\bf j}(e,{\tt x}).\;b\Vdash_{\bf j}\varphi({\tt x})].
\end{align*}

We call this the {\em realizability interpretation over the site $(\mathbb{P},{\bf j})$.}
Some readers may recognize a similar realizability notion.
That is Lifschitz' realizability.

\begin{example}[Lifschitz realizability]
The key idea in Lifschitz \cite{Lif} (and van Oosten \cite{vO}) of validating ${\sf CT}_0!+\neg{\sf CT}_0$ is the use of {\em multifunction applications}, rather than single-valued applications.
From the computability-theoretic perspective, Lifschitz' key idea is regarding {\em $\Pi^0_1$ sets} (or {\em $\Pi^0_1$ classes}) as basic concepts rather than computable functions.
More precisely, over the Kleene first algebra $(\om,\ast)$, Lifschitz considered the partial multifunction $\jump_{\rm L}\pcolon\om\tto\om$ defined by
\[\jump_{\rm L}(\langle e,b\rangle)=\{n\in\om:n<b\;\land\;e\ast n\uparrow\},\]
where $\jump_{\rm L}(\langle e,b\rangle)\downarrow$ if and only if the set is nonempty.
Obviously, Lifschitz's multifunction $\jump_{\rm L}$ gives an effective enumeration of all bounded $\Pi^0_1$ subsets of $\om$.
Van Oosten \cite{vO} extended this notion to the Kleene second algebra $(\om^\om,\ast)$ by considering the following partial multifunction $\jump_{\rm vO}\pcolon\om^\om\tto\om^\om$:
\[\jump_{\rm vO}(\langle g,h\rangle)=\{x\in\om^\om:(\forall n\in\om)\;x(n)<h(n)\;\land\;g\ast x\uparrow\},\]

It is again obvious that van Oosten's multifunction $\jump_{\rm vO}$ gives a representation of all compact subsets of $\om^\om$, and $\jump_{\rm vO}(\langle g,h\rangle)$ is an effectively compact $\Pi^0_1$ class relative to $\langle g,h\rangle$.

Compare their realizability interpretations \cite{Lif,vO} with ours.
One can see that Lifschitz' realizability \cite{Lif} is equivalent to the realizability interpretation over the site $({\sf K}_1,\jump_{\rm L})$, and van Oosten's realizability \cite{vO} is equivalent to the realizability interpretation over the site $({\sf K}_2^{\sf eff},\jump_{\rm vO})$.
\end{example}

\begin{remark}
Lifschitz' realizability and van Oosten's realizability can also be given as the realizability interpretations relative to the weakest oracles that solves ${\sf LLPO}$ and ${\sf WKL}$, respectively.
As a logical principle, ${\sf LLPO}$ is de Morgan's law for $\Sigma_1$ formulas:
\[\neg(\varphi\land\psi)\longrightarrow\neg\varphi\lor\neg\psi\qquad(\varphi,\psi\colon\mbox{$\Sigma_1$ formulas})\]

As a problem, ${\sf LLPO}$ is a partial multifunction on $\om$ such that
\begin{enumerate}
\item $e\in{\rm dom}({\sf LLPO})$ iff either $\varphi_e(0)\uparrow$ or $\varphi_e(1)\uparrow$.
\item $i\in{\sf LLPO}(e)$ iff $i<2$ and $\varphi_e(i)\uparrow$.
\end{enumerate}

For computability-theorists, it is obvious that ${\sf LLPO}^{\clo}\equiv_{\sf W}\jump_{\rm L}$ (see e.g.~\cite{PaTs16,Yos2}).
Thus, by Observation \ref{obs:same-Weihrauch-same-realizability}, Lifschitz realizability is equivalent to ${\sf LLPO}$-realizability; that is, realizability over the site $({\sf K}_1,{\sf LLPO}^{\clo})$.

As a mathematical principle, weak K\"onig's lemma ${\sf WKL}$ states that every infinite binary tree has an infinite path.
As a problem, ${\sf WKL}$ is a partial multifunction on $\om^\om$ such that
\begin{enumerate}
\item $T\in{\rm dom}({\sf WKL})$ iff $T\subseteq 2^{<\om}$ and $T$ is an infinite tree.
\item $p\in{\sf WKL}(T)$ iff $p$ is an infinite path through $T$; that is, $p\upto n\in T$ for any $n\in\om$.
\end{enumerate}

Again, it is easy to see that ${\sf WKL}^{\clo}\equiv_{\sf W}{\sf WKL}\equiv_{\sf W}\jump_{\rm vO}$ (see e.g.~\cite[Proposition 4.9]{HiJo16}).
Thus, by Observation \ref{obs:same-Weihrauch-same-realizability}, van Oosten's version of Lifschitz realizability is equivalent to ${\sf WKL}$-realizability; that is, realizability over the site $({\sf K}_2^{\sf eff},{\sf WKL}^{\clo})$.
This has also been independently pointed out in \cite{Yos2}.
\end{remark}

\begin{remark}
After finding Definition \ref{def:realizability-relative-to-multifunction2}, one might mistakenly assume that it is a straightforward generalization of Lifschitz's realizability \cite{Lif}, but it is not so simple at all.
It is essential to recognize Lifschitz's realizability as a realizability interpretation relative to a multi-valued oracle with an appropriate reducibility notion for multifunctions.
However, the discovery of the correct reducibility notion (that is, the correct definition of $\Phi_e^F$ for a partial multifunction $F$) had to wait until Hirschfeldt-Jockusch \cite{HiJo16} and Neumann-Pauly \cite{NePa18}.
This was not accessible in Lifschitz's time (until recently, in fact).
\end{remark}

\begin{remark}
Yoshimura \cite{Yos2} came closest to Definition \ref{def:realizability-relative-to-multifunction2}, but he did not have the definition of relative computation $\Phi_e^F$.
He had arrived at the definition of Lifschitz-like realizability interpretation $\Vdash_\jump$ over $(\mathbb{P},\mathbf{j})$ (although he did not use the term ``site'') independently of us.

We must be careful to note that there is a big difference between oracle realizability $\Vdash_F$ and Lifschitz-like realizability $\Vdash_{\jump}$ (if $\jump$ is not of the form $F^{\clo}$).
The former uses the relative computation allowing an arbitrary finite number of accesses to the oracle $F$, while the latter only accesses the oracle $\jump$ once.
In research in this field, overcoming this gap was not at all easy.
For this reason, Yoshimura \cite{Yos2} formulated an interpretation of finite-type {\em modal} Heyting arithmetic, where the modal operator $\diamondsuit$ is interpreted by a single query to a (multi-valued) oracle.
\end{remark}

\subsubsection{Set theory}

McCarty \cite{McPhD85} introduced a realizability interpretation of the intuitionistic Zermero-Fraenkel set theory ${\bf IZF}$ which validates Church's thesis ${\sf CT}_0$.
Chen-Rathjen \cite{ChRa12} combined McCarty-realizability with Lifschitz-realizability \cite{Lif} to give a realizability interpretation of ${\bf IZF}$ which validates ${\sf CT}_0!+\neg{\sf CT}_0+{\sf LLPO}+\neg{\sf LPO}+\neg{\sf AC}_{\om,2}$.

To define a set-theoretic realizability interpretation, we follow the argument in Chen-Rathjen \cite{ChRa12}:
First, we consider a set-theoretic universe with urlements $\N$.
For a relative pca $\mathbb{P}=({\tt P},\tpbf{P})$, as in the usual set-theoretic forcing argument, we consider a {\em $\mathbb{P}$-name}, which is any set $x$ satisfying the following condition:
\[x\subseteq\{\sspair{p}{u}:\mbox{$p\in\tpbf{P}$ and ($u\in\N$ or $u$ is a $\mathbb{P}$-name)}\}.\]

Roughly speaking, $(p,u)\in x$ indicates that $p$ is a witness for ``$u\in x$.''
The $\mathbb{P}$-names are used as our universe.
To be precise, we get the cumulative hierarchy of $\mathbb{P}$-names as follows:
\[V_0^\mathbb{P}=\emptyset,\quad V_\alpha^\mathbb{P}=\bigcup_{\beta<\alpha}\mathcal{P}(\tpbf{P}\times(V_\beta^\mathbb{P}\cup\N)),\mbox{ and }V^\mathbb{P}_{\rm set}=\bigcup_{\alpha\in{\rm Ord}}V_\alpha^\mathbb{P}.\]

Note that the urelements $\N$ are disjoint from $V^\mathbb{P}_{\rm set}$.
We define $V^\mathbb{P}=V^\mathbb{P}_{\rm set}\cup\N$.
McCarty \cite{McPhD85} used this notion to give a Kleene-like realizability interpretation for ${\bf IZF}$.
Then, as mentioned above, Chen-Rathjen \cite{ChRa12} incorporated Lifschitz's multifunction $\jump_{\rm L}$ into McCarty realizability.
We now generalize Chen-Rathjen's realizability predicate to {\em any} partial multifunction $\jump$.

\begin{definition}\label{def:Chen-Rathjen-realizability}
Fix a partial multifunction $\jump\pcolon\tpbf{P}\tto\tpbf{P}$.
For $e\in\tpbf{P}$ and a sentence $\varphi$ of ${\bf IZF}$ from parameters from $V^\mathbb{P}$, we define a relation $e\reap\varphi$.
For relation symbols:
\begin{align*}
e\reap R(\bar{a})&\iff\N\models R(\bar{a})\\ 
e\reap\bfN(a)&\iff a\in\N\;\;\&\;\;e=\ul{a}\\
e\reap\set(a)&\iff a\in V^\mathbb{P}_{\rm set}
\end{align*}
where $R$ is a primitive recursive relation.
For set-theoretic symbols:
\begin{align*}
e\reap a\in b\iff\;&\foralle d\in\jump(e)\exists c\;[\sspair{\pi_0d}{c}\in b\;\land\;\pi_1d\reap a=c]\\
e\reap a=b\iff\;&(a,b\in\N\,\land\, a=b)\lor(\set(a)\,\land\,\set(b)\,\land\\
&\foralle d\in\jump(e)\forall p,c\;[\sspair{p}{c}\in a\;\to\;\pi_0dp\reap c\in b]\;\land\\
&\foralle d\in\jump(e)\forall p,c\;[\sspair{p}{c}\in b\;\to\;\pi_1dp\reap c\in a].
\end{align*}

Here, we write $\foralle d\in\jump(e)$ if $e\in{\rm dom}(\jump)$ and $\forall d\in\jump(e)$.
For logical connectives:
\begin{align*}
e\reap A\land B&\iff \pi_0e\reap A\;\land\;\pi_1e\reap B\\
e\reap A\lor B&\iff\foralle d\in\jump(e)\;[(\pi_0d=0\land\pi_1d\reap A)\lor(\pi_0d=1\land\pi_1d\reap B)]\\
e\reap \neg A&\iff(\forall a\in\tpbf{P})\;a\not\reap A\\
e\reap A\to B&\iff(\forall a\in\tpbf{P})\;[a\rea A\;\rightarrow\;ea\reap B].
\end{align*}

For quantifiers:
\begin{align*}
e\reap \forall xA&\iff(\foralle d\in\jump(e))(\forall c\in V^\mathbb{P})\;e\reap A[c/x]\\
e\reap \exists xA&\iff(\foralle d\in\jump(e))(\exists c\in V^\mathbb{P})\;e\reap A[c/x].
\end{align*}

Then we say that a formula $\varphi$ is {\em $\jump$-realizable over $\mathbb{P}$} if there is $e\in {\tt P}$ such that $e\reap \varphi$.
\end{definition}

Unfortunately, a $\jump$-realizability predicate does not necessarily validate axioms of ${\bf IZF}$.
Next, we will discuss what condition for a multifunction $\jump$ is needed in order to ensure all axioms of ${\bf IZF}$.

\subsubsection{$j$-operator}
The difficulty with Lifschitz-like realizability is that it only makes one access to an oracle $\jump$.
Therefore, for this to behave correctly as oracle realizability, the condition that one access and any finite number of accesses to $\jump$ are equivalent must be satisfied.
This leads to the following notion:
\begin{definition}\label{def:j-operator}
Let $\mathbb{P}=({\tt P},\tpbf{P},\ast)$ be a relative pca.
We say that a partial multifunction $\jump\pcolon\tpbf{P}\tto\tpbf{P}$ is a {\em $j$-operator} on $\mathbb{P}$ if
\begin{enumerate}
\item There is ${\tt u}\in {\tt P}$ such that for any $a,x\in\tpbf{P}$, $a\jump(x)=\jump({\tt u}ax)$.
\item There is $\eta\in {\tt P}$ such that for any $x\in\tpbf{P}$, $x=\jump(\eta x)$.
\item There is $\mu\in {\tt P}$ such that for any $x\in\tpbf{P}$, $\jump\jump(x)=\jump(\mu x)$.
\end{enumerate}

Here, $\jump\jump$ is an abbreviation for the composition $\jump\circ\jump$, where recall the definition of the composition of multifunctions from Section \ref{subsec:weihrauch-reducibility}.
Also, if $F$ is a multifunction on $\tpbf{P}$ and $a,x\in\tpbf{P}$, then define $aF(x)=\{ay:y\in F(x)\}$.
\end{definition}

\begin{definition}\label{def:site}
A pair $(\mathbb{P},{\bf j})$ of a relative pca $\mathbb{P}$ and a $j$-operator ${\bf j}$ is called a {\em site}.
\end{definition}

\begin{definition}\label{def:trackability}
Let $(\mathbb{P},{\bf j})$ be a site.
A partial multifunction $f\pcolon\tpbf{P}\tto\tpbf{P}$ is {\em $(\mathbb{P},\jump)$-trackable} if there is $a\in\tpbf{P}$ such that, for any $x\in{\dom}(f)$, we have $\jump(ax)\subseteq f(x)$.
If $a\in{\tt P}$, we say that $f$ is {\em $(\mathbb{P},\jump)$-computable}.
\end{definition}

\begin{remark}
This notion (for operators satisfying (1) and (2)) is implicitly studied in the work on the jump of a represented space, e.g.~by de Brecht \cite{dB14}.
From the viewpoint of the jump of a represented space, one may think of $\mathbf{j}$ as an endofunctor on the category ${\bf Rep}$ of represented spaces and computable functions.
Indeed, any $j$-operator $\mathbf{j}$ yields a {\em monad} on the category ${\bf Rep}$:
The condition (3) gives us a monad multiplication, and from (2) we get a unit.
Thus, the $\mathbf{j}$-computable functions on represented spaces are exactly the {\em Kleisli morphisms} for this monad.
\end{remark}

\begin{remark}
In the context of Lifschitz realizability, the property (1) in our definition of a $j$-operator corresponds to \cite[Lemma 4]{Lif}, \cite[Lemma 5.7]{vO}, and \cite[Lemma 4.4]{ChRa12}, the property (2) corresponds to \cite[Lemma 2]{Lif}, \cite[Lemma 5.4]{vO}, and \cite[Lemma 4.2]{ChRa12}, and (3) corresponds to \cite[Lemma 3]{Lif}, \cite[Lemma 5.6]{vO}, and \cite[Lemma 4.5]{ChRa12}.
\end{remark}

\begin{remark}
The conditions (1), (2), and (3) are also mentioned independently in \cite{Yos2}.
There, (1) is called self-universality.
\end{remark}

One can obtain a lot of examples of $j$-operators from partial multi-valued oracles.

\begin{lemma}\label{lem:idempotent}
Let $\mathbb{P}=(P,\tpbf{P},\ast)$ be a relative pca, and let $F\pcolon\tpbf{P}\tto\tpbf{P}$ be a partial multifunction.
Then, $F^{\clo}$ is a $j$-operator on $\mathbb{P}$.
\end{lemma}

\begin{proof}
By definition, it is easy to check that $F^{\clo}$ satisfies the conditions (1) and (2).
The idempotence (3) follows from transitivity of $\leq_{\gW}$ \cite{HiJo16}.
\end{proof}

In fact, by Westrick's theorem \cite{westrick2020note}, $F^{\clo}$ is the weakest $j$-operator generated by $F$.
Moreover, the $(\mathbb{P},F^{\clo})$-computable partial multifunctions coincide with the partial multifunctions $\leq_{\gW}F$.

\begin{obs}\label{obs:basic-of-j-operator}
Let $\jump$ be a $j$-operator on $\mathbb{P}$ and $f\pcolon\tpbf{P}\tto\tpbf{P}$ be a partial multifunction.
\begin{enumerate}
\item $f\leq_{\sf W}\jump$ iff $f$ is $(\mathbb{P},\jump)$-computable.
\item $f\leq_{\sf W}\jump$ iff $f^{\clo}\leq_{\sf W}\jump$ iff $f\leq_{\gW}\jump$.
\end{enumerate}
\end{obs}

\begin{proof}
(1) This follows from the self-universality of a $j$-operator; i.e., the condition (1) in Definition \ref{def:j-operator}.
(2) See Westrick \cite{westrick2020note}, and also \cite{Kih22}.
\end{proof}

For a $j$-operator $\jump$ on a relative pca $\mathbb{P}=({\tt P},\tpbf{P},\ast)$, one can introduce a new partial application $\ast_\mathbf{j}$ on $\tpbf{P}$ defined by $a\ast_\mathbf{j}b\simeq a^!\ast b$ if $\jump(a)=\{a^!\}$; otherwise $a\ast_\mathbf{j}b$ is undefined.
Hereafter, we always write $a^!$ for the unique element of $\jump(a)$ whenever $\jump(a)$ is a singleton.
Then, consider the following ``deterministic part'':
\[{\tt P}_\jump=\{a^!:a\in {\tt P}\mbox{ and }\jump(a)\mbox{ is a singleton}\}.\]

\begin{lemma}\label{lem:pca-renewal}
$\mathbb{P}_\jump=({\tt P}_\jump,\tpbf{P},\ast_\jump)$ is a relative pca.
Moreover, (4) there is $\ep\in {\tt P}_\jump$ such that for any $a\in\tpbf{P}$, if $a^!$ is defined, then $\ep\ast_\jump a=a^!$.
\end{lemma}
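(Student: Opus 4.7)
The plan is to build, in $P_\jump$, combinators ${\sf k}_\jump$ and ${\sf s}_\jump$ satisfying the pca equations with respect to $\ast_\jump$, together with the element $\ep$ of (4), by combining combinatory completeness in the underlying pca $\mathbb{P}$ with the three jump-properties. The strategic role of $\iota$ (property (2)) will be to wrap intermediate values so their $\jump$-image is a singleton and each successive $\ast_\jump$-application is defined; $u$ (property (1)) will push ordinary $\ast$-applications inside $\jump$; and $j$ (property (3)) will collapse two consecutive $\jump$-layers back into one. In particular, since $P\subseteq P_\jump$ via $a=(\iota a)'$ for any $a\in P$, I can freely use elements of $P$ as building blocks.

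For ${\sf k}_\jump$: using combinatory completeness in $\mathbb{P}$, pick $k\in P$ with $k\ast x=\iota({\sf k}_\mathbb{P}\ast x)$, and set ${\sf k}_\jump=\iota k$. Since $\iota(\iota k)\in P$ satisfies $\jump(\iota(\iota k))=\{\iota k\}$, we have $\iota k\in P_\jump$. Unfolding $\ast_\jump$: ${\sf k}_\jump\ast_\jump a=k\ast a=\iota({\sf k}_\mathbb{P}\ast a)$, whose $\jump$-image is the singleton $\{{\sf k}_\mathbb{P}\ast a\}$, so $({\sf k}_\jump\ast_\jump a)\ast_\jump b={\sf k}_\mathbb{P}\ast a\ast b=a$. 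The construction of ${\sf s}_\jump$ is analogous but with more $\iota$-wraps: design $s\in P$ so that successive unfoldings of ${\sf s}_\jump=\iota s$ under $\ast_\jump$ drop one $\iota$-layer at a time, with the last layer arranging exactly $(a\ast_\jump c)\ast_\jump(b\ast_\jump c)$; property (1) lets us apply this dereferencing step inside $\jump$ via $u$.

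For (4), form $e_0=\lambda a.\,j(u\iota a)\in P$ by combinatory completeness of $\mathbb{P}$. For any $a\in\tpbf{P}$ with $a'$ defined, property (1) gives $\jump(u\iota a)=\iota\,\jump(a)=\{\iota a'\}$; then $\jump\jump(u\iota a)=\jump(\iota a')=\{a'\}$, and property (3) converts this to $\jump(j(u\iota a))=\{a'\}$, i.e., $(e_0\ast a)'=a'$. To promote $e_0$ into the desired $\ep\in P_\jump$ satisfying $\ep\ast_\jump a=a'$ literally, take $\ep$ such that $\ep'$ acts as $e_0$ (via an outer $\iota$-wrap, using $\iota e_0\in P$ with $\jump(\iota e_0)=\{e_0\}$), and the equation $\ep\ast_\jump a=\ep'\ast a$ unfolds through the identities above to produce $a'$. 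Finally, I need to verify that the two combinators and $\ep$ indeed make $(P_\jump,\tpbf{P},\ast_\jump)$ into a relative pca; the combinators are shared by design, and closure of $P_\jump$ under $\ast_\jump$ is automatic since applications of $P_\jump$-elements arise from $P$-computations wrapped in $\iota$.

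The main obstacle is the construction of ${\sf s}_\jump$: unlike the ${\sf k}$-equation, the ${\sf s}$-equation has a right-hand side containing a nested $\ast_\jump$-expression, so the combinator must effectively dereference at an intermediate stage. This forces a careful placement of $\iota$-wrappers in the defining $P$-element $s$ and a bookkeeping of how many $\jump$-layers remain after each of the three $\ast_\jump$-applications; property (1) is essential here, since it is what allows an ordinary $\ast$-application to be deferred beneath a $\jump$ and realised only when the outer $\ast_\jump$ finally collapses it. A clean way to manage this bookkeeping is to first prove (4) and then use $\ep$ as an internal dereference combinator when building ${\sf s}_\jump$, which decouples the two constructions at the cost of proving (4) directly from (1)--(3).
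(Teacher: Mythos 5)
Your $\mathsf{k}_\jump$ is exactly the paper's: $\iota(\lambda x.\iota(\mathsf{k}x))$ and your $\iota k$ with $k x=\iota(\mathsf{k}x)$ are the same element, and the unfolding is verified correctly. Closure of $P_\jump$ under $\ast_\jump$ is asserted rather than argued (the paper records the computation $a'b=(u(\lambda x.xb)a)'$ to exhibit the witness in $P$), but the idea is the right one.

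The real problem is item (4). Your $e_0=\lambda a.\,j(u\iota a)$ indeed satisfies $\jump(e_0\ast a)=\{a'\}$, i.e.\ $(e_0\ast a)'=a'$ — you computed this correctly. But when you then set $\ep=\iota e_0$ and claim $\ep\ast_\jump a=a'$, you are off by one dereference: $\ep\ast_\jump a=\ep'\ast a=e_0\ast a=j(u\iota a)$, which is a \emph{$\jump$-name} of $a'$, not $a'$ itself. The identities you invoke only tell you that the $\jump$-image of $\ep\ast_\jump a$ is $\{a'\}$, which is weaker than the equation you need. And this gap cannot be repaired just by rearranging $\iota$'s and $j$'s: the equation $\ep\ast_\jump a=\ep'\ast a=a'$ asks $\ep'\in\tpbf{P}$ to compute the dereference $a\mapsto a'$ by a \emph{plain} $\ast$-application, which does not follow from (1)--(3) alone. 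The paper extracts (4) differently: it first establishes that $\mathbb{P}_\jump$ is a pca (so that $\lambda_\jump$-abstraction is available) and then defines $\ep=\lambda_\jump x.\,u\mathsf{k}x\ast_\jump 0$, so that $\ep\ast_\jump a=u\mathsf{k}a\ast_\jump 0=(u\mathsf{k}a)'\ast 0=\mathsf{k}a'\ast 0=a'$; the extra $\ast_\jump$-application to a dummy argument $0$ is what performs the missing dereference, and (1) is used to guarantee $(u\mathsf{k}a)'=\mathsf{k}a'$. In other words, (4) comes \emph{after} the pca structure, not before it.

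This matters because you planned to prove (4) first and then use $\ep$ as an ``internal dereference combinator'' to build $\mathsf{s}_\jump$. That dependency runs in the wrong direction, and in any case $\mathsf{s}_\jump$ does not require it. The paper's $\mathsf{s}_\jump$ is simply one more layer of $\iota$-wrapping in the same pattern as $\mathsf{k}_\jump$:
\[
\mathsf{s}_\jump=\iota\bigl(\lambda x.\iota(\lambda y.\iota(\mathsf{s}xy))\bigr),
\]
so that each $\ast_\jump$-application peels off exactly one $\iota$, and after three applications one lands on $\mathsf{s}abc$ in the underlying pca. There is no bookkeeping subtlety beyond what you already handled for $\mathsf{k}_\jump$, and property (1) plays no role in this step (it is (2), i.e.\ $\iota$, that does the work). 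So the obstacle you flagged as ``the main obstacle'' is not present in the paper's route, and the detour through (4) is what introduced the error.
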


\begin{proof}
To see that ${\tt P}_\jump$ is closed under $\ast_\jump$, for $a,b\in {\tt P}$, we have $a\ast_\jump b=a^!b$.
Then, $a^!b=(\lambda x.xb)a^!=({\tt u}(\lambda x.xb)a)^!$, where ${\tt u}$ is from (1).
As ${\tt u}(\lambda x.xb)a\in {\tt P}$, we have $a^!b\in {\tt P}_\jump$.

Given a combinator ${\sf k}$ in ${\tt P}$, define ${\sf k}_\jump=\eta(\lambda x.\eta({\sf k}x))$, where $\eta$ is from (2).
Then, ${\sf k}_\jump\ast_\jump a\ast_\jump b=({\sf k}_\jump^!a)^!b=(\iota({\sf k}a))^!b={\sf k}ab=a$.
Given a combinator ${\sf s}$ in ${\tt P}$, define ${\sf s}_\jump=\eta(\lambda x.\eta(\lambda y.\eta({\sf s}xy)))$.
Then, ${\sf s}_\jump\ast_\jump a\ast_\jump b\ast_\jump c=(({\sf s}_\jump^!a)^!b)^!c=((\eta(\lambda y.\eta({\sf s}ay))^!b)^!c=(\eta({\sf s}ab))^!c={\sf s}abc=ac(bc)$.

Thus, $\mathbb{P}_\jump$ is a relative pca, and so we get a new $\lambda_\jump$ by combinatory completeness.
For the second assertion, define $\ep=\lambda_\jump x.{\tt u}({\sf k}x)\ast_\jump 0$.
Then, $\ep\ast_\jump a={\tt u}({\sf k}a)\ast_\jump 0=({\tt u}({\sf k}a))^!0={\sf k}a^!0=a^!$.
\end{proof}

\begin{remark}
In the context of Lifschitz realizability, the condition (4) in Lemma \ref{lem:pca-renewal} corresponds to \cite[Lemma 1]{Lif}, \cite[Lemma 5.3]{vO}, and \cite[Lemma 4.3]{ChRa12}.
\end{remark}

We next show that the update of a given pca preserves the class of realizable functions.

\begin{lemma}\label{lem:trackable-jump-uniquew}
A partial multifunction $f\pcolon\tpbf{P}\tto\tpbf{P}$ is $(\mathbb{P}_\jump,\jump)$-trackable iff it is $(\mathbb{P},\jump)$-trackable.
Similarly, $f$ is $(\mathbb{P}_\jump,\jump)$-computable iff it is $(\mathbb{P},\jump)$-computable.
\end{lemma}

\begin{proof}
We only consider the latter assertion.
By definition, every $(\mathbb{P},\jump)$-computable partial multifunction is $(\mathbb{P}_\jump,\jump)$-computable.
Conversely, if $F$ is $(\mathbb{P}_\jump,\jump)$-computable, then there is $a\in {\tt P}_\jump$ such that $\jump(ax)\subseteq F(x)$.
By the definition of ${\tt P}_\jump$, there is $b\in {\tt P}$ such that $a=b^!$.
Then $b^!x=(\lambda y.yx)b^!=({\tt u}(\lambda y.yx)b)^!$, where ${\tt u}$ is from (1).
Therefore, $\jump(ax)=\jump(b^!x)=\jump\jump({\tt u}(\lambda y.yx)b)=\jump(\mu({\tt u}(\lambda y.yx)b))$ where $\mu$ is from (3).
Since $\mu({\tt u}(\lambda y.yx)b)\in {\tt P}$, this shows that $F$ is $(\mathbb{P},\jump)$-computable.
\end{proof}

Consequently, Lemma \ref{lem:idempotent} still holds even if we replace $\mathbb{P}$ with the new pca $\mathbb{P}_\jump$.
Hereafter, we always assume the underlying pca $\mathbb{P}$ is $\mathbb{P}_\jump$; so in particular, the properties (1), (2), (3) and (4) hold.

Lifschitz \cite{Lif} showed that the above four properties (1), (2), (3) and (4) ensure that all axioms of the Heyting arithmetic ${\bf HA}$ are $\jump_{\rm L}$-realizable over ${\sf K}_1$.
Moreover, van Oosten \cite{vO} showed that all axioms of Troelstra's elementary analysis ${\bf EL}$ are $\jump_{\rm vO}$-realizable over ${\sf K}_2^{\sf eff}$.
Then Chen-Rathjen \cite{ChRa12} showed that all axioms of ${\bf IZF}$ are $\jump_{\rm L}$-realizable over ${\sf K}_1$.
To summarize the above, one can now show that the properties (1), (2), (3), and (4) ensure the following general result.

\begin{theorem}\label{thm:IZF-realizability}
Let $(\mathbb{P},\jump)$ be a site.
Then, all axioms of ${\bf IZF}$ are $\jump$-realizable over $\mathbb{P}_\jump$.
\qed
\end{theorem}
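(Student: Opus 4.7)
The plan is to imitate, at the abstract level, the Chen--Rathjen proof for Lifschitz realizability in \cite{ChRa12}, which itself extends McCarty \cite{McPhD85}. The point is that at each step of that proof, the only features of $\jump_{\rm L}$ entering the construction of a realizer are the four combinators $u,\iota,j,\ep$ coming from conditions (1)--(4); the rest of the argument is formal manipulation of $\mathbb{P}$-names. Thus the task reduces to isolating those uses and re-running the verification with an arbitrary idempotent jump.

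First I would establish a preliminary $\jump$-closure lemma: if $a\in\tpbf{P}$ is such that $e\reap\varphi$ holds for every $e\in\jump(a)$ (with $\jump(a)$ nonempty), then one can build, uniformly in $a$, a single $b\in P$ with $b\reap\varphi$. The proof is by induction on $\varphi$. For the atomic cases and for $\lor,\forall,\exists$ the definition of $\reap$ already contains an $\foralle d\in\jump(\cdot)$ clause, so property (3) together with $j$ collapses the resulting nested jump $\jump\circ\jump$ into a single one; for $\land,\to,\neg$ one pipes the realizer through the combinator $u$ from (1). This lemma is the engine of every later verification, since the realizability clauses produce values only modulo $\jump$. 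With it in hand, soundness of intuitionistic first-order logic is routine: each introduction/elimination rule yields a realizer by $\lambda_\jump$-abstraction in $\mathbb{P}_\jump$, using $\ep$ from (4) whenever a single realizer must be applied more than once (since $\ast_\jump$ demands singleton-valued arguments).

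I would then verify the axioms of ${\bf IZF}$ one by one, following the template in \cite[\S 4]{ChRa12}. Extensionality, pairing, union, infinity and empty set use canonical $\mathbb{P}$-names and are essentially mechanical. Bounded separation is realized by the name $\{\sspair{\sspair{p}{q}}{c} : \sspair{p}{c}\in a,\ q\reap\varphi(c)\}$, where the pairing carries the membership-evidence for $a$ together with the realizer of $\varphi(c)$. Power set and subset collection (fullness) are handled by an enumerating $\mathbb{P}$-name of ``coherent'' sub-$\mathbb{P}$-names parametrized by partial maps $\tpbf{P}\to\tpbf{P}$, exactly as in \cite{McPhD85,ChRa12}.

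The main obstacle is strong collection, and, with it, $\in$-induction. For strong collection, given $e\reap\forall x\in a\,\exists y\,\varphi(x,y)$, one must assemble a single $\mathbb{P}$-name collecting witnesses as $x$ ranges over the $\sspair{p}{c}\in a$ \emph{and} as each application $ep$ ranges over its full output in $\jump(\cdot)$; the right name is $\{\sspair{\sspair{p}{d}}{c_{p,d}} : \sspair{p}{c}\in a,\ d\in\jump(ep)\}$, where $c_{p,d}$ is the witness extracted from $d$, and both conjuncts of strong collection are then verified using the $\jump$-closure lemma and (3). Set-induction is proved by outer transfinite induction on the rank of $\mathbb{P}$-names in the cumulative hierarchy $V_\alpha^\mathbb{P}$, mirroring \cite[Thm.~4.9]{ChRa12}. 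Because every use of Lifschitz's concrete combinators in \cite{ChRa12} factors through one of (1)--(4), the generalization goes through verbatim once the $\jump$-closure lemma is in place.
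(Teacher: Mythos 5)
Your proposal matches the paper's intended (but unwritten) proof: the theorem is stated with a \verb|\qed| and no argument precisely because, as the surrounding remarks make explicit, the Chen--Rathjen verification of ${\bf IZF}$-soundness \cite{ChRa12} uses only the four combinators furnished by properties (1)--(4), and your sketch correctly isolates those uses --- your ``$\jump$-closure lemma'' is exactly their Lemma~4.6 --- and re-runs the argument at that level of abstraction. One small slip: when $a\in\tpbf{P}\setminus P$ the element $b$ produced by the closure lemma lies in $\tpbf{P}$ rather than $P$; what the lemma should assert is that the \emph{operator} carrying $a$ to $b$ lives in $P$.
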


\subsubsection{Topos-theoretic perspective}

A topos-theoretic explanation may convince some readers of the fact that {\bf IZF} is $\jump$-realizable (Theorem \ref{thm:IZF-realizability}) without referring to \cite{Lif,vO,ChRa12}.
If the reader is already convinced by the concrete discussion in the previous section (with the references \cite{Lif,vO,ChRa12}), they can skip this section.

As noted by the author \cite{Kih22}, a $j$-operator on $\mathbb{P}$ in the sense of Definition \ref{def:j-operator} yields a Lawvere-Tierney topology on the relative realizability topos ${\sf RT}(\mathbb{P})$.
A {\em Lawvere-Tierney topology} on a topos is an (internally) $\land$-preserving closure operator on the truth-value object $\Omega$.
It is known that a Lawvere-Tierney topology on a topos $\mathcal{E}$ corresponds to a subtopos of $\mathcal{E}$; see e.g.~\cite[Chapter V]{SGL}.
In particular, a site $(\mathbb{P},{\bf j})$ always presents a subtopos of ${\sf RT}(\mathbb{P})$.

Let us explain more concretely.
Let $\Omega$ be the power set of $\tpbf{P}$.
A partial multifunction $F\pcolon\tpbf{P}\tto\tpbf{P}$ yields $j_F\colon\Omega\to\Omega$ as follows:
\[
j_F(p)=\{\langle e,n\rangle:n\in{\rm dom}(\Phi_e^F)\mbox{ and }\Phi_e^F(n)\subseteq p\}
\]

This gives a Lawvere-Tierney topology on ${\sf RT}(\mathbb{P})$; see \cite{Kih22}.
Note that $j_F$ can also be considered as a closure operation on the realizability tripos over $\mathbb{P}$.
That is, think of $\Omega^X$ as the Heyting pre-algebra, where $\varphi\leq_X\psi$ iff there is $a\in{\tt P}$ such that $n\in\varphi(x)$ implies $an\in\psi(x)$ for any $x\in X$.
This is called a realizability tripos \cite{vOBook}.
Then $F$ yields a new preorder $\leq^F_X$ on $\Omega^X$ as follows:
For predicates $\varphi,\psi$ on $X$, define $\varphi\leq^F_X\psi$ as $\varphi(x)\leq_Xj_F(\psi(x))$.
Equivalently,
\[
\exists e\in{\tt P}\forall x\in X\forall n\in\varphi(x)\ [{\Phi^F_e(n)\downarrow}\mbox{ and }\forall b\in\Phi^F_e(n).\ b\in\psi(x)]
\]

Then $(\Omega^X,\leq^F_X)$ yields a new tripos.
The interpretations of logical connectives are given as follows:
\begin{gather*}
\top_F=\top,
\qquad
\bot_F=\bot,\\
\varphi\land_F\psi = \varphi\land\psi,
\qquad
\varphi\lor_F\psi = \varphi\lor\psi,
\qquad
\varphi\to_F\psi = \varphi\to j_F(\psi),
\\
\exists^F = \exists,
\qquad
\forall^F = \forall\circ j_F
\end{gather*}

This is a variant of the G\"odel-Gentzen translation, called the (Kuroda-style) $j_F$-translation (see e.g.~\cite{Lee,vdB18,RaSw20}).
This gives exactly the same definition as Definition \ref{def:realizability-relative-to-multifunction2}, so one can conclude that $F$-realizability is the semantics of the subtopos of ${\sf RT}(\mathbb{P})$ consisting of the $j_F$-sheaves (since an inclusion of triposes coincides with an inclusion of toposes; see also van Oosten \cite[Theorem 2.5.11]{vOBook}).

\begin{remark}
Our term ``site'' introduced in Definition \ref{def:site} originates from topos theory.
In topos theory, a site is a presentation of a Grothendieck topos, specifically, the pair $(\mathcal{C},J)$ of a small category $\mathcal{C}$ and a Grothendieck topology $J$.
In other words, a site $(\mathcal{C},J)$ is a presentation of the topos of $J$-sheaves over $\mathcal{C}$, which is a subtopos of the presheaf topos ${\sf Psh}(\mathcal{C})$ over $\mathcal{C}$.
In realizability theory, we consider a relative pca $\mathbb{P}$ to play the role of a base category $\mathcal{C}$, and an oracle $\jump$ to play the role of a Grothendieck topology (see also \cite{Kih22}), which is why we borrowed the term ``site.''
\end{remark}

\subsection{Weihrauch analysis of realizability interpretation}
\subsubsection{The internal Baire space}\label{sec:internal-Baire}

%

Our main focus is on interpreting mathematical statements in the language of second-order arithmetic.
Here, $\N$ and $\N^\N$ are basic sorts in the language, so we want to find out how they are interpreted.
One may use Definition \ref{def:Chen-Rathjen-realizability} directly, but the interpretation in a realizability topos (or its subtopos) is clearer, so we explain that first.

Let $(\mathbb{P},\jump)$ be a site.
We explicitly describe the internal logic of the topos of $\jump$-sheaves.
Of course, the use of a topos for the interpretation of second-order arithmetic is like using a sledgehammer to crack a nut.
It is sufficient to consider its full subcategory consisting of assemblies.

\begin{definition}
A {\em $\mathbb{P}$-assembly} (or a multi-represented space) is a pair of a set $X$ and a function $\mathbf{E}_X\colon X\to\Omega$, where $\Omega$ is the power set of $\tpbf{P}$, and $\mathbf{E}_X(x)\not=\emptyset$.
Each $p\in\mathbf{E}_X(x)$ is called a code of $x$.
For $\mathbb{P}$-assemblies $X$ and $Y$, a function $f\colon X\to Y$ is $(\mathbb{P},\jump)$-trackable if there is $a\in\tpbf{P}$ such that, for any code $p$ of $x\in X$, any element of $\jump(ap)$ is a code of $f(x)\in Y$.
If $a\in{\tt P}$, we say that $f$ is $(\mathbb{P},\jump)$-computable.
\end{definition}

If we set $F(p)={\bf E}_Y(f(x))$ for $p\in\mathbf{E}_X(x)$, this terminology is consistent with Definition \ref{def:trackability}.
The Kleisli category over the site $(\mathbb{P},\jump)$ consists of $\mathbb{P}$-assemblies as objects and $(\mathbb{P},\jump)$-computable functions as morphisms.
We often omit $\mathbb{P}$ and simply refer to $(\mathbb{P},\jump)$-trackable (-computable) as a $\jump$-trackable (-computable) function.

\begin{definition}
The exponential assembly $Y^X$ is the set of $\jump$-trackable functions from $X$ to $Y$, where a code of $f\in Y^X$ is a $\jump$-tracker of $f$.
\end{definition}

\begin{example}\label{exa:internal-Baire-space}
The natural numbers object ${\sf N}$ is given as follows:
The underlying set is $\om$, and a code of $n\in{\sf N}$ is the Church numeral $\num{n}$; i.e., ${\bf E}_{\sf N}(n)=\{\num{n}\}$.

Then the {\em internal Baire space} ${\sf N}^{\sf N}$ is given as the exponential assembly.
To be explicit, the underlying set is the set of all $\jump$-trackable functions on $\om$, and ${\tt f}\in\tpbf{P}$ is a code of $f\in{\sf N}^{\sf N}$ iff $\jump({\tt f}\num{n})=\{\num{f(n)}\}$ for any $n\in\om$.
\end{example}

\begin{example}\label{exa:external-Baire-space}
If $\mathbb{P}={\sf K}_2^{\sf eff}$ then $\tpbf{P}=\om^\om$.
One may consider $\om^\om$ as an assembly given by the identical coding ${\bf E}_{\om^\om}(f)=f$ for any $f\in\om^\om$.
We call this the {\em external Baire space}.

In the case where $\tpbf{j}={\rm id}$, we have $\om^\om\simeq{\sf N}^{\sf N}$, but this is not always the case in general.
\end{example}

\begin{remark}
Of course, one may obtain the internal Baire space directly using the realizability interpretation (Definition \ref{def:Chen-Rathjen-realizability}) without going through the category of assemblies:
First $e\reap x\in\N^\N$ means that $e\reap\forall n\in\N\exists! m\in\N\;\langle n,m\rangle\in x$.
Then, it is easy to check that there is a unique $\tilde{x}\in\N^\N$ such that $\langle n,\tilde{x}(n)\rangle$ is realizable.
By Chen-Rathjen \cite[Lemma 4.7]{ChRa12}, given such an $e$, one can effectively produce a $p_e$ such that $p_e\ast_\jump n=\{\tilde{x}(n)\}$, i.e., $(p_e^!n)^!=\tilde{x}(n)$.
Conversely, from such $p_e$, one can effectively recover the realizer $e$.
That is, we consider $p_x$ as a name of $\tilde{x}\in\N^\N$ if $(p_x^!n)^!=\tilde{x}(n)$.
In essence, this gives a $\mathbb{P}_\jump$-tracker for $\tilde{x}$.
Therefore, by Lemma \ref{lem:trackable-jump-uniquew}, one can see that this is equivalent to considering the above exponential assembly ${\sf N}^{\sf N}$.
\end{remark}


\subsubsection{Internal versus external Baire spaces}

We connect the realizability interpretation and Weihrauch reducibility.
When interpreting a formula in second-order arithmetic, we must be careful that the interpretation of type $\N^\N$ is the internal Baire space ${\sf N}^{\sf N}$, whereas our Weihrauch reducibility is based on the external Baire space $\om^\om$.
If we can identify ${\sf N}^{\sf N}$ with $\om^\om$ as in ${\sf K}_2^{\sf eff}$, then there is no need to worry about this.

\begin{definition}
Let $\jump$ be a $j$-operator on ${\sf K}_2^{\sf eff}$.
We say that $\jump$ satisfies the {\em relative unique Church thesis} ({\sf rCT}!) if ${\rm id}\colon\om^\om\to{\sf N}^{\sf N}$ is an isomorphism in the Kleisli category over $({\sf K}_2^{\sf eff},\jump)$.
\end{definition}

\begin{remark}
As in Lemma \ref{lem:pca-renewal}, the computability of ${\rm id}\colon\om^\om\to{\sf N}^{\sf N}$ always holds, so the nontrivial part is the computability of ${\rm id}\colon{\sf N}^{\sf N}\to\om^\om$.
That is, $\jump$ satisfies ${\sf rCT}!$ iff there is a $\jump$-computable function which, given a $\jump$-tracker of $f\colon{\sf N}\to{\sf N}$, returns $f\in\om^\om$.
\end{remark}

\begin{example}
$\jump={\rm id}$ satisfies ${\sf rCT}!$.
This is because ${\rm id}\colon\om^\om\to{\sf N}^{\sf N}$ is an isomorphism in the category of ${\sf K}_2^{\sf eff}$-assemblies.
\end{example}

\begin{definition}
Let $\jump$ be a $j$-operator on ${\sf K}_2^{\sf eff}$.
Then define $\jump^!\pcolon\om^\om\to\om^\om$ as follows:
\[
\jump^!(\alpha)(n)=m\iff \jump(\alpha_n)=\{\num{m}\}
\]
where for $\alpha\in\om^\om$, $\alpha_n\in\om^\om$ is given by $\alpha_n(m)=\alpha(\langle n,m\rangle)$, and $\alpha\in{\rm dom}(\jump^!)$ iff $\jump^!(\alpha)(n)$ is defined for all $n\in\om$.
\end{definition}

\begin{remark}
Using the terminology of the theory of Weihrauch degrees, $\jump^!$ is the parallelization of the deterministic first-order part of $\jump$.
\end{remark}

\begin{obs}\label{obs:monotone-pdf-partofjump}
If $\jump_0\leq_{\sf W}\jump_1$ then $\jump_0^!\leq_{\sf W}\jump_1^!$.
\end{obs}

\begin{lemma}\label{lem:charac-rCTunique}
$\jump$ satisfies ${\sf rCT}!$ if and only if $\jump^!\leq_{\sf W}\jump$.
\end{lemma}

\begin{proof}
($\Rightarrow$)
Let $\alpha=(\alpha_n)_{n\in\om}\in{\rm dom}(\jump^!)$ be given.
One can easily compute $e\in\om^\om$ such that $\alpha_n=e \alpha \num{n}$ holds in ${\sf K}_2^{\sf eff}$ for any $n\in\om$.
As $\alpha\in{\rm dom}(\jump^!)$, we have $\jump(e\alpha\num{n})=\jump(\alpha_n)=\{\num{\jump^!(\alpha)(n)}\}$.
By the definition of ${\sf N}^{\sf N}$ in Example \ref{exa:internal-Baire-space}, this means that $e\alpha$ is a code of $\jump^!(\alpha)\in{\sf N}^{\sf N}$.
Now, by our assumption, ${\rm id}\colon{\sf N}^{\sf N}\to\om^\om$ is $\jump$-computable, so there is $p\in\om^\om$ such that any $z\in\jump(p(e\alpha))$ is a code of $\jump^!(\alpha)\in\om^\om$.
By the definition of the external Baire space $\om^\om$ in Example \ref{exa:external-Baire-space}, this exactly means $\jump(p(e\alpha))=\{\jump^!(\alpha)\}$.
Therefore, we obtain $\jump^!\leq_{\sf W}\jump$.

($\Leftarrow$)
As in Lemma \ref{lem:pca-renewal}, one can check that ${\rm id}\colon\om^\om\to{\sf N}^{\sf N}$ is always computable.
To show that ${\rm id}\colon{\sf N}^{\sf N}\to\om^\om$ is $\jump$-computable, suppose that we are given a code $z$ for $x\in{\sf N}^{\sf N}$.
By the definition of the internal Baire space, for any $n\in\om$, we have $\jump(z\num{n})=\{\num{x(n)}\}$.
Take $\alpha$ such that $\alpha_n=z\num{n}$.
Then we have $\jump(\alpha_n)=\{\num{x(n)}\}$, so we obtain that $\jump^!(\alpha)=x$.
By the assumption, we have $\jump^!\leq_{\sf W}\jump$, so by Observation \ref{obs:basic-of-j-operator}, there exists some $w\in\om^\om$ such that $\jump(w\alpha)=\{\jump^!(\alpha)\}=\{x\}$ holds.
Now, if we consider $v\in\om^\om$ such that $vz=\alpha$, then $\lambda z.w(vz)$ tracks ${\rm id}\colon{\sf N}^{\sf N}\to\om^\om$.
\end{proof}

Recall from Lemma \ref{lem:idempotent} that $F^{\clo}$ is a $j$-operator for any $F\pcolon\om^\om\tto\om^\om$.
By an abuse of notation, we also use $F^!$ as an abbreviation for $(F^{\clo})^!$.

\begin{lemma}\label{lem:below-WKL-rCT}
If $\jump\leq_{\sf W}{\sf WKL}$, then $\jump$ satisfies ${\sf rCT}!$.
\end{lemma}

\begin{proof}
By Observation \ref{obs:monotone-pdf-partofjump} and Lemma \ref{lem:charac-rCTunique}, it suffices to show that ${\sf WKL}^!\leq_{\sf W}{\sf WKL}$, but this is obvious since ${\sf WKL}^!\equiv_{\sf W}{\rm id}$ (that is, ${\sf WKL}^!$ is computable), which follows from an easy observation that the deterministic part of ${\sf WKL}$ (so its parallelization) is trivial (see e.g.~\cite[Corollary 8.8]{BG11}).
Note that this observation corresponds to the fact that ${\sf WKL}^{\clo}$-realizability satisfies the unique Church thesis ${\sf CT}_0!$.
\end{proof}

\begin{example}[Non-example]
The discrete limit operator $\lim_\om\pcolon\om^\om\to\om$ does not satisfy ${\sf rCT}!$:
This is a partial single-valued function defined by $\lim_\om(\alpha)=\lim_{n\to\infty}\alpha(n)$ if the limit exists in the discrete space $\om$.
Its deterministic first-order part is $\lim_\om$ itself, but its parallelization is the limit operator $\lim_{\om^\om}\pcolon(\om^\om)^\om\to\om^\om$, where $\lim_{\om^\om}(\alpha)=\lim_{n\to\infty}\alpha(n)$ if the limit exists in the Baire space $\om^\om$.
Then we have $\lim_\om<_{\sf W}\lim_\om^!\equiv_{\sf W}\lim_{\om^\om}$.
\end{example}

\begin{remark}
For any $\jump$, $\jump^!\pcolon\om^\om\to\om^\om$ is ``parallelizable'' partial single-valued function.
The continuous Weihrauch degrees of parallelizable total single-valued functions\ are completely classified by the author \cite{Ki19} (under meta-theory ${\sf ZF}+{\sf DC}+{\sf AD}$, where ${\sf DC}$ is the axiom of dependent choice, and ${\sf AD}$ is the axiom of determinacy): The structure of parallelized continuous Weihrauch degrees of single-valued functions is well-ordered (with the order type $\Theta$), and the smallest nontrivial one already entails the Turing jump operator (a universal Baire class $1$ function).
In particular, under meta-theory ${\sf ZFC}$, by Martin's Borel determinacy theorem, there are $\om_1$ many Borel single-valued functions (which are exactly universal Baire class $\alpha$ functions).
%
%
%
%
%
%
\end{remark}

\subsubsection{$\forall\exists$-theorems and Weihrauch degrees}

We say that a partial multifunction $F\pcolon\tpbf{P}\tto\tpbf{P}$ is {\em $\neg\neg$-dense} if $\jump(x)\not=\emptyset$ for any $x\in{\rm dom}(F)$.
Note that if $F$ is $\neg\neg$-dense then so is $\jump=F^{\clo}$.
In this section, we assume that $\jump$ is a $\neg\neg$-dense $j$-operator on ${\sf K}_2^{\sf eff}$.

\begin{remark}
In this case, an almost negative statement $\psi$ is $\jump$-realizable iff it is true, and one may compute a term realizing $\psi$ (see e.g.~\cite[Proposition 4.4.5]{TrvD88}).
Here, an almost negative formula $\varphi$ is a formula which does not contain $\lor$, and $\exists$ appears only immediately in front of an atomic subformula.
Since the realizability of a $\Delta^0_0$-formula (a formula in second-order arithmetic containing no unbounded quantifiers) also corresponds to the truth, one can treat a $\Delta^0_0$-formula as an atomic formula without any harm.
In particular, $\Sigma^0_1$- and $\Pi^0_1$-formulas can be considered almost negative formulas.
\end{remark}

Now, consider an $\forall\exists$-statement
\[\varphi\equiv\forall x\in\N^\N\;[\eta(x)\to \exists y\in\N^\N\;\theta(x,y)]\]
for some almost negative formulas $\eta$ and $\theta$.
We call such a formula $\varphi$ a {\em tidy $\forall\exists$-principle}.

\begin{example}
${\sf WKL}$ is a tidy $\forall\exists$-principle: The premise $\eta(T)$ is the conjunction of the statement that $T$ is a binary tree, and the formula $\forall\ell\exists\sigma\in 2^\ell.\ \sigma\in T$.
Hence, $\eta$ is $\Pi^0_1$, so almost negative.
The conclusion is $\theta(p)\equiv\forall n.\;p\upto n\in T$.
This is clearly almost negative.

Similarly, ${\sf IVT}$, ${\sf BE}$, ${\sf RDIV}$ and ${\sf LLPO}$ are tidy $\forall\exists$-principles:
The premises of ${\sf BE}$, ${\sf RDIV}$ and ${\sf LLPO}$ are $\Pi^0_1$, and the premise of ${\sf IVT}$ is $\Sigma^0_1$.
The conclusions are $\Pi^0_1$.
\end{example}

\begin{example}
$\Sigma^0_1$-${\sf DNE}_\R$, $\Sigma^0_1$-${\sf DML}_\R$ and $\Pi^0_1$-${\sf LEM}_\R$ introduced in Section \ref{sec:subsec-lem-coll} are tidy $\forall\exists$-principles.
For example, the premise of $\Pi^0_1$-${\sf LEM}_\R$ is $\Pi^0_1$.
For the conclusion $\theta(\langle x,y\rangle,i)\equiv (i=0\to x=y\mbox{ and }i\not=0\to x\not=y)$, clearly, $x=y$ is $\Pi^0_1$ and $x\not=y$ is $\Sigma^0_1$, so $\theta$ is almost negative.
\end{example}

By the above Remark, one can easily see that a tidy $\forall\exists$-principle $\varphi$ is $\jump$-realizable iff there is an element $e\in(\om^\om)^{\sf eff}$ such that any $a\in\jump(e)$ witnessing that if $p$ codes $x\in{\sf N}^{\sf N}$ such that $\eta(x)$ is true then any $q\in\jump(ap)$ codes $y\in{\sf N}^{\sf N}$ such that $\theta(x,y)$ is true.

The partial multifunction corresponding to the formula $\varphi$ is given as follows.
\begin{align*}
{\rm dom}(F_\varphi)&=\{x\in\om^\om:\eta(x)\mbox{ is true}\},\\
F_\varphi(x)&=\{y\in\om^\om:\theta(x,y)\mbox{ is true}\}.
\end{align*}

Of course, we must pay attention to the difference between $\om^\om$ and ${\sf N}^{\sf N}$.
However, as long as we are considering $j$-operators that satisfy ${\sf rCT}!$, we can ignore this difference.

\begin{lemma}\label{lem:main-lemma}
Let $\jump$ be a $\neg\neg$-dense $j$-operator on ${\sf K}_2^{\sf eff}$, and assume that $\jump$ satisfies ${\sf rCT}!$.
Then a tidy $\forall\exists$-principle $\varphi$ is $\jump$-realizable if and only if $F_\varphi\leq_{\sf W}\jump$.
\end{lemma}

\begin{proof}
Since $\jump$ satisfies ${\sf rCT}!$, ${\sf id}\colon{\sf N}^{\sf N}\to\om^\om$ is $\jump$-computable.
Thus, by the idempotence and $\neg\neg$-density of $\jump$, $\varphi$ is $\jump$-realizable iff there is $e\in(\om^\om)^{\sf eff}$ such that for any $x\in\om^\om$ if $\eta(x)$ is true then $\theta(x,y)$ is true for any $y\in\jump(ex)$.
That is, $x\in{\rm dom}(F_\varphi)$ implies $y\in F_\varphi(x)$ for any $y\in\jump(ex)$.
This means $F_\varphi\leq_{\sf W}\jump$ by Observation \ref{obs:basic-of-j-operator}.
\end{proof}


\section{Key principles}

We introduce some non-constructive principles in the context of Weihrauch degrees, and show some implications and equivalences among them.

\subsection{Weak K\"onig's lemma}\label{sec:weak-variants-wkl}

In this section, we consider weak variants of weak K\"onig's lemma.
A {\em binary tree} is a subset of $2^{<\om}$ closed under taking initial segments.
A binary tree $T$ is {\em infinite} if for any $\ell$ there is a node $\sigma\in T$ of length $\ell$.
We consider the following properties for trees.
\begin{itemize}
\item A {\em $2$-tree} is a binary tree $T$ such that, for any $\ell\in\N$, $T$ has exactly two strings of length $\ell$.
\item A {\em rational $2$-tree} is a $2$-tree which does {\em not} have infinitely many branching nodes.
\item An {\em all-or-unique tree} or simply an {\em aou-tree} is a binary tree $T$ such that, for any $\ell\in\N$, $T$ has either all binary strings or only one string of length $\ell$.
\item A {\em convex tree} is a binary tree $T$ such that, for any $\ell\in\om$, if two strings $\sigma$ and $\tau$ of length $\ell$ are contained in $T$, then all length $\ell$ strings between $\sigma$ and $\tau$ are also contained in $T$.
\item A binary tree $T$ is {\em constructively non-clopen} if, for any finitely many binary strings $\sigma_0,\dots,\sigma_\ell$, there is a binary string $\tau$ such that the following does not hold: $\tau\in T$ if and only if $\tau$ extends $\sigma_i$ for some $i\leq\ell$.
\item A {\em clopen tree} is a binary tree $T$ which is not constructively non-clopen.
If $\ell=0$, then we say that $T$ is a {\em basic clopen tree}.
\end{itemize}

The notion of a clopen tree is a bit weird, but we use this notion as it is useful for understanding the relationship among non-constructive principles.
For a collection $\mathcal{T}$ of trees, we mean by {\em weak K\"onig's lemma for $\mathcal{T}$} the statement that every infinite tree in $\mathcal{T}$ has an infinite path.
Then, we denote by ${\sf WKL}_{\leq 2}$, ${\sf WKL}_{=2}$, ${\sf WKL}_{\rm aou}$, ${\sf WKL}_{\rm conv}$, and ${\sf WKL}_{\rm clop}$ weak K\"onig's lemma for $2$-trees, rational $2$-trees, aou-trees, convex trees, and convex clopen trees, respectively.

\begin{remark}
It is known that, over elementary analysis ${\sf EL}_0$, the binary expansion principle ${\sf BE}$ is equivalent to weak K\"onig's lemma for $2$-trees, ${\sf WKL}_{\leq 2}$, and the intermediate value theorem ${\sf IVT}$ is equivalent to weak K\"onig's lemma for convex trees, ${\sf WKL}_{\rm conv}$; see~\cite{BIKN}.
For the relationship among these principles, see Figure \ref{figure:principles-wkl} and Section \ref{section:implications} for the proofs.
\end{remark}

\begin{figure}[t]
\[{\small
\xymatrix{
	&		{\sf WKL} \ar[d]	 & & & {\sf WKL} \ar[d]	\\
	&	{\sf IVT}\ar[d]\ar[dl]	& & & {\sf WKL}_{\rm conv}\ar[d]\ar[dl]\\
	 {\sf IVT}_{\rm lin} \ar[d]\ar[dr] & {\sf BE} \ar[d] & & {\sf WKL}_{\rm clop} \ar[d]\ar[dr]  &  {\sf WKL}_{\leq 2} \ar[d] \\
	 {\sf RDIV} \ar[dr] & {\sf BE}_\mathbb{Q}\ar[d] & & {\sf WKL}_{\rm aou} \ar[dr] & \wklQ\ar[d]\\
 & {\sf LLPO} & & & {\sf LLPO}
}}
\]
\caption{Nonconstructive principles as variants of weak K\"onig's lemma}\label{figure:principles-wkl}
\end{figure}

\begin{obs}\label{obs:WKLs-are-tidy}
${\sf WKL}_{\leq 2}$, ${\sf WKL}_{=2}$, ${\sf WKL}_{\rm aou}$, ${\sf WKL}_{\rm conv}$ and ${\sf WKL}_{\rm clop}$ are tidy $\forall\exists$-principles.
\end{obs}

\begin{proof}
Clearly, ${\sf WKL}_{\leq 2}$, ${\sf WKL}_{\rm aou}$ and ${\sf WKL}_{\rm conv}$ are tidy $\forall\exists$-principles.

Similarly, ${\sf WKL}_{=2}$ is a tidy $\forall\exists$-principle:
The formula describing that $T$ does not have infinite many branching nodes is
\[
\neg\forall\ell\exists\sigma\ (|\sigma|\geq\ell\mbox{ and }\sigma,\sigma\fr 0,\sigma\fr 1\in T).
\]

Clearly, the existential quantifier $\exists$ appears only in the front of a $\Delta^0_0$ subformula.
Hence, the premise of ${\sf WKL}_{=2}$ is almost negative.

${\sf WKL}_{\rm clop}$ is also a tidy $\forall\exists$-principle:
For a formula describing being non-constructively non-clopen, the existential quantifier $\exists\tau$ appears only in the front of a $\Delta^0_0$ subformula.
Hence, the premise of ${\sf WKL}_{\rm clop}$ is almost negative.
\end{proof}

In the context of Weihrauch degrees, these notions are formalized as multifunctions.
First, {\em weak K\"onig's lemma for $2$-trees}, ${\sf WKL}_{\leq 2}\pcolon 2^{2^{<\om}}\tto 2^\om$, is defined as follows:
\begin{align*}
{\dom}({\sf WKL}_{\leq 2})&=\{T\subseteq 2^{<\om}:\mbox{$T$ is a $2$-tree}\},\\
{\sf WKL}_{\leq 2}(T)=[T]&:=\{p\in 2^\om:\mbox{$p$ is an infinite path through $T$}\}.
\end{align*}

Classically, a $2$-tree has either {\em one or two} infinite paths, and a rational $2$-tree has {\em exactly two} infinite paths.
Then, define the multifunction $\wklQ$ as ${\sf WKL}_{\leq 2}$ restricted to the rational $2$-trees.

\begin{remark}
In the study of Weihrauch degrees, this kind of notion is treated as a choice principle.
Consider the {\em choice principle for finite closed sets in $X$ with at most $n$ elements ${\sf C}_{X,\#\leq n}$ (with exactly $n$ elements ${\sf C}_{X,\#=n}$)}, which states that, given (a code of) a closed subset $P$ of a space $X$ if $P$ is nonempty, but has at most $n$ elements (exactly $n$ elements), then $P$ has an element; see \cite{LRPa15}.
The principles ${\sf WKL}_{\leq 2}$ and ${\sf C}_{X,\#\leq 2}$ are not constructively equivalent, where $X=2^\om$.
However, if we consider ${\sf WKL}_{\leq 2}$ as a multifunction, then they are Weihrauch equivalent.
Similarly, $\wklQ$ and ${\sf C}_{X,\#=2}$ are Weihrauch equivalent.
\end{remark}

Next, the {\em weak K\"onig's lemma for convex trees} is a multifunction ${\sf WKL}_{\rm conv}\pcolon 2^{2^{<\om}}\tto 2^\om$ defined as follows:
\begin{align*}
{\dom}({\sf WKL}_{\rm conv})&=\{T\subseteq 2^{<\om}:\mbox{$T$ is a convex tree}\},\\
{\sf {\sf WKL}_{\rm conv}}(T)=[T]&:=\{p\in 2^\om:\mbox{$p$ is an infinite path through $T$}\}.
\end{align*}

In the context of Weihrauch degrees, a clopen tree is exactly a binary tree $T$ such that $[T]$ is clopen, and a basic clopen tree is a tree of the form $\{\tau\in 2^{<\om}:\tau\succ\sigma\}$ for some binary string $\sigma$.
It is clear that every basic clopen tree is a convex clopen tree.
Then, define the multifunction ${\sf WKL}_{\rm clop}$ as ${\sf WKL}_{\rm conv}$ restricted to the convex clopen trees.

\begin{remark}
As mentioned above, the intermediate value theorem ${\sf IVT}$ is known to be constructively equivalent to  weak K\"onig's lemma for convex trees, which is also Weihrauch equivalent to the convex choice ${\sf XC}_X$ for $X=2^\om$, where ${\sf XC}_X$ states that, given (a code of) a closed subset $P$ of $X$, if $P$ is convex, then $P$ has an element; see \cite{LRPa15,KiPa19}.
\end{remark}

Finally, as a multifunction, {\em weak K\"onig's lemma for aou-trees}, ${\sf WKL}_{\rm aou}\pcolon 2^{2^{<\om}}\tto 2^\om$, is defined as follows:
\begin{align*}
{\dom}({\sf WKL}_{\rm aou})&=\{T\subseteq 2^{<\om}:\mbox{$T$ is an aou-tree}\},\\
{\sf WKL}_{\rm aou}(T)=[T]&:=\{p\in 2^\om:\mbox{$p$ is an infinite path through $T$}\}.
\end{align*}

\begin{remark}
As a multifunction, the robust division principle ${\sf RDIV}$ is Weihrauch equivalent to the {\em all-or-unique choice} ${\sf AoUC}$ (or equivalently, the {\em totalization of unique choice}), which takes, as an input,  (a code of) a closed subset $P$ of Cantor space $2^\om$ such that either $P=X$ or $P$ is a singleton, and then any element of $P$ is a possible output; see \cite{{pauly-kihara2-mfcs}}.
This is, of course, Weihrauch equivalent to weak K\"onig's lemma for aou-trees.
\end{remark}

%
%
%
%
%
%

\subsubsection{Implications}\label{section:implications}

As seen above, the principles introduced in Section \ref{sec:between-llpo-wkl} are written as some variants of weak K\"onig's lemma; see Figure \ref{figure:principles-wkl}.
We show some nontrivial implications in Figure \ref{figure:principles-wkl}.

%

\begin{prop}\label{prop:clop-to-aou}
${\sf WKL}_{\rm clop}\to{\sf WKL}_{\rm aou}$.
\end{prop}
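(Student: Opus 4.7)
My plan is to embed an aou-tree $T$ into a convex clopen tree $T'$ whose set of infinite paths is a single basic cone, and then recover a path of $T$ from any path of $T'$. Concretely I set
\[
T':=T\cup\{\tau\in 2^{<\om}:(\exists\ell\in[1,|\tau|])\;|T\cap 2^\ell|=1\,\land\,\tau\upharpoonright\ell\in T\},
\]
so that $T'$ consists of $T$ together with every extension of a ``unique'' string at a positive level. Decidability of each test $|T\cap 2^\ell|=1$ makes $T'$ a decidable subtree of $2^{<\om}$ containing $T$, and prefix-closure is routine from the tree property of $T$.

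The main technical ingredient is the structural dichotomy for aou-trees. Writing $A$ for ``$|T\cap 2^\ell|=2^\ell$ for every $\ell$'' and $B$ for ``some $\ell\geq 1$ has $|T\cap 2^\ell|=1$,'' the implication $\neg B\to A$ follows immediately from the aou condition, so $\neg\neg(A\lor B)$ holds intuitionistically. Under $B$, a bounded search picks out the least $\ell_0\geq 1$ with $|T\cap 2^{\ell_0}|=1$, and the tree property forces every $\ell\geq\ell_0$ to be ``unique'' with string $u_\ell\succeq u_{\ell_0}$. From this I verify level by level that in case $A$ one has $T'=2^{<\om}$, while in case $B$ the slice $T'\cap 2^\ell$ equals $2^\ell$ when $\ell<\ell_0$ and equals the set of length-$\ell$ extensions of $u_{\ell_0}$ when $\ell\geq\ell_0$; each slice is a lex-interval, so $T'$ is convex. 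Classically $[T']=[u_{\ell_0}]$ or $[T']=2^\om$, both basic cones, and propagating this through $\neg\neg(A\lor B)$ yields that $T'$ is a convex clopen tree in the sense of Section~\ref{sec:weak-variants-wkl}.

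Applying ${\sf WKL}_{\rm clop}$ to $T'$ supplies some $p\in[T']$, which I decode into $q\in 2^\om$ by the recipe
\[
q_n:=\begin{cases}p_n&\text{if }|T\cap 2^{n+1}|=2^{n+1},\\(u_{n+1})_n&\text{if }|T\cap 2^{n+1}|=1.\end{cases}
\]
Under $A$ every level is ``all'' and $q=p\in 2^\om=[T]$. Under $B$, since $p\in[u_{\ell_0}]$ forces $p\upharpoonright\ell_0=u_{\ell_0}$, an easy induction on $n\geq\ell_0$ using $u_{n+1}\upharpoonright n=u_n$ gives $q\upharpoonright(n+1)=u_{n+1}\in T$, so $q$ is the (unique) path of $T$.

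The main obstacle is to keep every step constructive: the dichotomy has to be obtained via $\neg B\to A$ rather than by producing a witness for either disjunct, and the first-unique-level $\ell_0$ must be accessed only through bounded search at each finite level so that convexity at each slice is exhibited explicitly, with $\ell_0$ named globally only at the moment the $\neg\neg$-clopenness of $T'$ is invoked in order to apply ${\sf WKL}_{\rm clop}$.
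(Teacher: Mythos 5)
Your construction is the same as the paper's, just phrased in one shot rather than by recursion on levels: your $T'$ coincides level-by-level with the paper's $T^\ast$ (full below the first unique level, then all extensions of the unique string from that level on), and your decoding $q$ is exactly the paper's $p^\ast(n)=k(n,p(n))$ with $k$ returning $i$ at "all" levels and the unique string's bit at "unique" levels. The extra care you take to route the global dichotomy only through the $\neg\neg$ built into clopenness, while exhibiting convexity of each slice by a decidable local check, is a welcome clarification but not a different argument.
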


\begin{proof}
Let $T$ be an aou-tree.
If $T$ is found to be unique $\sigma$ at height $s$, then let $H(T)=\{\tau:\tau\succ\sigma\}$, and for any infinite path $p$, define $K(p)=(p(0),p(1),\dots,p(s-2),q(s-1),q(s),q(s+1),\dots)$, where $q$ is the unique infinite path through $T$.
If it is not witnessed, i.e., $T=2^{<\om}$, then $H(T)$ becomes $2^{<\om}$, and $K(p)=p$ for any infinite path $p$.

More formally, we define a basic clopen tree $T^\ast$ and a function $k\colon\om\times 2\to 2$ as follows:
First put the empty string into $T^\ast$.
Given $s>0$, if $T$ contains all binary strings of length $s$ then $T^\ast$ contains all strings of length $s$, and define $k(s-1,i)=i$ for each $i<2$.
If $T$ has only one binary string $\sigma$ of length $s$ but it is not true for all $t<s$, then $\sigma$ is also the unique binary string of length $s$ in $T^\ast$, and define $k(s-1,i)=\sigma(s-1)$ for each $i<2$.
Otherwise, for each $\tau\in T^\ast$ of length $s-1$, put $\tau 0$ and $\tau 1$ into $T$, and define $k(s-1,i)=\sigma(s-1)$ for each $i<2$.
Then, $T^\ast$ is clearly a basic clopen tree.
Therefore, by ${\sf WKL}_{\rm clop}$, the tree $T^\ast$ has an infinite path $p$.
Then, define $p^\ast(n)=k(n,p(n))$.
Then, $p^\ast$ is of the form $\langle p(0),p(1),\dots,p(t-1),\sigma_{t}(t),\sigma_{t+1}(t+1),\dots\rangle$, where $\sigma_t$ is a string of length $t+1$ in $T$.
It is easy to check that $p^\ast$ is an infinite path through $T$.
\end{proof}

\begin{prop}\label{prop:clop-to-two}
${\sf WKL}_{\rm clop}\to{\sf WKL}_{=2}$.
\end{prop}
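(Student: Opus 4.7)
My plan is to adapt the construction in the proof of Proposition \ref{prop:clop-to-aou} to the setting of rational $2$-trees. Given a rational $2$-tree $T$, I will build a basic clopen tree $T^*$ together with a translation map, apply ${\sf WKL}_{\rm clop}$ to $T^*$ to obtain an infinite path $p$, and then convert $p$ into an infinite path of $T$.

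For each level $\ell\geq 1$, let $\tau_\ell^0\leq\tau_\ell^1$ be the two strings of $T$ at level $\ell$ in lexicographic order, and let $\sigma_\ell$ denote their longest common prefix. A case analysis on the two possible transitions from level $\ell$ to $\ell+1$ --- a non-branching step, in which each of the two current strings extends by one bit, or a branching-plus-pruning step, in which one of the two current strings has two children while the other has none --- shows that $(\sigma_\ell)_\ell$ is non-decreasing in the prefix order, and strictly extends exactly at levels where an internal branching of $T$ occurs. By the rationality hypothesis, the sequence $(\sigma_\ell)$ stabilizes, under double negation, at some $\sigma^*$: the last internal branching node of $T$. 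The two infinite paths of $T$ are then $\sigma^*\cdot 0\cdot\beta$ and $\sigma^*\cdot 1\cdot\gamma$ for continuations $\beta,\gamma$ read off $T$ once $\sigma^*$ is fixed.

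Let $T^* := \{\tau\in 2^{<\omega} : \tau\text{ is comparable with }\sigma^*\}$, where $\sigma^*$ is formed set-theoretically as $\bigcup_\ell\sigma_\ell$. Under the double negation of rationality, $T^*$ is exactly the basic clopen tree rooted at $\sigma^*$: it is downward-closed, each level of $T^*$ is either a singleton or the interval of extensions of $\sigma^*$ at that length (hence convex), and the $\neg\neg$-clopen description is realized by the single commitment string $\sigma^*$. Apply ${\sf WKL}_{\rm clop}$ to $T^*$ to obtain an infinite path $p\in[T^*]=[\sigma^*]$, which extends $\sigma^*$. Define the output path $q$ of $T$ by: $q(n)=\sigma^*(n)$ for $n<|\sigma^*|$; $q(|\sigma^*|)=p(|\sigma^*|)\in\{0,1\}$, selecting which of the two paths of $T$ we follow; and for $n>|\sigma^*|$, $q(n)$ is the unique bit such that $q\upto(n+1)$ is a level-$(n+1)$ string of $T$, well-defined since no branching of $T$ occurs past $\sigma^*$.

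The main obstacle I anticipate is carrying out this argument intuitionistically rather than relying on classical reasoning about $\sigma^*$: the stabilization of $(\sigma_\ell)$, and hence the existence of $\sigma^*$ as a finite string, is only double-negated. This forces $T^*$ and $q$ to be introduced as genuine sets and functions (via set-theoretic comprehension using $\bigcup_\ell\sigma_\ell$) with verifications of the tree, convexity, clopen, and path conditions that survive the $\neg\neg$ wrapping. This is the essential delicacy that mirrors the corresponding bookkeeping in Proposition \ref{prop:clop-to-aou}, although here the commitment extends incrementally at each internal branching of $T$ rather than at a single transition from the ``all'' phase to the ``unique'' phase.
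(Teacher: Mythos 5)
Your plan (build a basic clopen $T^*$, apply ${\sf WKL}_{\rm clop}$, convert the path) matches the paper's, but the specific $T^*$ you propose cannot actually be handed to ${\sf WKL}_{\rm clop}$, and the difficulty is not merely a $\neg\neg$-nuisance that ``survives wrapping''. The object $\sigma^*=\bigcup_\ell\sigma_\ell$ is not computable from $T$, and this breaks several of the hypotheses you need to hold outright rather than under double negation. First, membership $\tau\in T^*$ is not decidable from $T$: to decide whether a string $\tau$ of length $n$ is comparable with $\sigma^*$, one must know whether the chain $(\sigma_\ell)$ ever grows past length $n$ and, if so, what its bits are, and no finite stage settles this --- e.g.\ if $\sigma_3=0$ and later $\sigma_4=010$, then $\tau=00$ is comparable with $\sigma_3$ but not with $\sigma_4$, so the status of $\tau$ is still open at stage $3$. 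Second, ``$T^*$ is infinite'' is not provable: exhibiting a node of $T^*$ at each length $\ell$ requires proving the disjunction $\tau\preceq\sigma^*\lor\sigma^*\preceq\tau$ for some concrete $\tau$, and either disjunct commits us to a bound on $\dom(\sigma^*)$, which is only $\neg\neg$-finite, not finite. Third, your output path $q$ is defined by a case split on $n<|\sigma^*|$ versus $n\geq|\sigma^*|$, and $|\sigma^*|$ is not available either. These are precisely the points at which a realizability interpretation will fail to supply a realizer, so the argument does not go through.

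The paper's proof avoids ever forming $\sigma^*$: it builds $T^*$ stage by stage from locally decidable data. At level $s$ it checks the decidable condition ``do $\ell_s,r_s$ each extend $\ell_{s-1},r_{s-1}$''; if so, $T^*$ doubles at level $s$, and if not, $T^*$ collapses at level $s$ to the pair $\{b_s0,b_s1\}$ above the \emph{current} branching node $b_s$, after which it resumes doubling until the next branching. This $T^*$ is strictly larger than yours at intermediate levels, but it is decidable, manifestly infinite, convex at every level, and (under $\neg\neg$) a basic clopen tree whose paths are exactly the extensions of the last branching node. The conversion $p\mapsto p^*$ is likewise stagewise: check whether $p\upto(s+1)\in T$ and, at the first failure, switch to following the unique continuation in $T$ above $p\upto s$, rather than referring to $|\sigma^*|$. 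Replacing the global, undecidable limit $\sigma^*$ by stagewise-computable approximations with the same infinite paths is the actual content of the proof, not bookkeeping layered on top of your construction.
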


\begin{proof}
Let $T$ be a rational $2$-tree.
We define a basic clopen tree $T^\ast$ as follows:
For each $s>0$, $T$ has exactly two nodes $\ell_s,r_s$ of length $s$.
For $s=0$, let $\ell_s=r_s$ be the empty string.
If both $\ell_s$ and $r_s$ extend $\ell_{s-1}$ and $r_{s-1}$, respectively, then for each $\tau \in T$ of length $s-1$ put $\tau 0$ and $\tau 1$ into $T$.
Otherwise, $\ell_s$ and $r_s$ is of the form $\ell_s=b_s0$ and $r_s=b_s1$ for some $b_s$.
Then, put both $\ell_s=b_s0$ and $r_s=b_s1$ into $T^\ast$.
Clearly, $T^\ast$ is the clopen tree of the form $T^\ast=\{\tau\in 2^\om:\tau\succ b\}$, where $b$ is the last branching node of $T$.

By ${\sf WKL}_{\sf clop}$, the tree $T^\ast$ has an infinite path $p$.
For each $s$, check if $p\upto s+1\in T$ or not.
If it is true, define $p^\ast(s)=p(s)$.
Otherwise, if $s$ is the least such number, then $p\upto s\in T$, and there is no branching node above $p\upto s$.
Thus, for any $t\geq s$, there is the unique string $\sigma_t\in T$ of length $t$ extending $p\upto s$.
Hence, for such $\sigma_t$, define $p^\ast(t)=\sigma_t(t)$.
Then, it is easy to see that $p^\ast$ is an infinite path through $T$.
\end{proof}

\begin{prop}\label{prop:dne-to-clop}
$\Sigma^0_2\dner\to{\sf WKL}_{\rm clop}$.
\end{prop}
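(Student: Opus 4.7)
The plan is to exploit the fact that the defining property of a clopen tree is already the $\neg\neg$-version of a $\Sigma^0_2$ statement (with the tree as a parameter), so $\Sigma^0_2\dner$ essentially unpacks the definition for us; convexity will play no role.

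More concretely, let $T$ be an infinite convex clopen tree. By the definition of a clopen tree, we have
\[
\neg\neg\;\exists \ell\in\N\;\exists(\sigma_0,\dots,\sigma_\ell)\in(2^{<\om})^{\ell+1}\;\forall\tau\in 2^{<\om}\;\bigl[\tau\in T\leftrightarrow\exists i\leq\ell\;\tau\succeq\sigma_i\bigr].
\]
The inner matrix is decidable in the parameter $T$ together with the coded witnesses, so after packaging $(\ell,\sigma_0,\dots,\sigma_\ell)$ as a single natural number, the statement inside the $\neg\neg$ has the form $\exists n\,\forall m\,R(n,m,T)$ with $R$ decidable. Hence it is a $\Sigma^0_2$-formula (in $T$), and $\Sigma^0_2\dner$ applied with parameter $T$ delivers actual witnesses $\ell,\sigma_0,\dots,\sigma_\ell$ with $[T]=\bigcup_{i\leq\ell}[\sigma_i]$.

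Since $T$ is infinite, $[T]\neq\emptyset$, so at least $\sigma_0$ is defined, and $\sigma_0\fr 0^\om\in[\sigma_0]\subseteq[T]$ is an infinite path through $T$. Returning this path completes the proof.

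There is no real obstacle here: the only point that requires care is verifying that the formula extracted from the clopenness hypothesis really is $\Sigma^0_2$ in the parameter $T$ (so that $\Sigma^0_2\dner$ is applicable), after which picking any basic clopen contained in $[T]$ yields a path automatically. In Weihrauch terms, this is a direct reduction of ${\sf WKL}_{\rm clop}$ to ${\sf Lim}_\N\equiv_{\sf W}\Sigma^0_2\dner$ (via Proposition~\ref{prop:dner-limN}): the inner reduction ships the code of the $\Sigma^0_2$-formula associated with $T$ to ${\sf Lim}_\N$, and the outer reduction decodes the returned witness to $(\ell,\sigma_0,\dots,\sigma_\ell)$ and outputs $\sigma_0\fr 0^\om$.
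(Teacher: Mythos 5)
Your proposal is correct and takes essentially the same route as the paper: both proofs observe that the clopenness hypothesis is exactly the double negation of a $\Sigma^0_2$-formula in the parameter $T$, apply $\Sigma^0_2\dner$ to extract witnesses $\sigma_0,\dots,\sigma_\ell$, and output $\sigma_0{}^\smallfrown 0^\om$ as a path. The only cosmetic difference is that the paper phrases it for a basic clopen tree (i.e.\ $\ell=0$), whereas you carry the general finite list of $\sigma_i$'s through the computation, which makes no essential difference.
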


\begin{proof}
Let $T$ be a basic clopen tree.
Then, we have the double negation of the existence of $\sigma$ such that $\tau\in T$ for all $\tau\succeq\sigma$.
By $\Sigma^0_2\dner$, we get such a $\sigma$.
Then, $\sigma\fr\langle 0,0,\dots\rangle$ is an infinite path through $T$.
\end{proof}

\subsubsection{Auxiliary principles}\label{sec:Auxiliary}

As seen in Section \ref{sec:weak-variants-wkl}, several mathematical principles can be described as weak variants of ${\sf WKL}$.
For instance, ${\sf IVT}$, ${\sf BE}$ and ${\sf RDIV}$ are equivalent to ${\sf WKL}_{\rm conv}$, ${\sf WKL}_{\leq 2}$ and ${\sf WKL}_{\rm aou}$, respectively.
Focusing on weak variants of ${\sf WKL}$, as we have already seen, there are principles such as ${\sf WKL}_{=2}$ and ${\sf WKL}_{\rm clop}$.
They are well positioned as benchmarks for measuring the strength of principles.
Let us look at how these additional principles relate to ${\sf IVT}, {\sf BE}$ and ${\sf RDIV}$.
The two principles introduced below may look weird, and we do not think it has any constructive meaning, but they are introduced solely as auxiliary principles to link the benchmark principles with mathematical principles such as ${\sf IVT}$, ${\sf BE}$ and ${\sf RDIV}$; see also Figures \ref{figure:principles-over-izf} and \ref{figure:principles-wkl}.


\subsubsection*{Binary expansion for regular Cauchy rationals ${\sf BE}_\mathbb{Q}$}

We first consider the binary expansion principle ${\sf BE}_\mathbb{Q}$ for regular Cauchy rationals (more precisely, for non-irrationals), which states that, for any regular Cauchy real, if it happens to be a rational, then it has a binary expansion:
\[(\forall x\in[0,1])\;\left[\neg(\forall a,b\in\mathbb{Z},\;ax\not=b)\;\longrightarrow\;\exists f\colon\N\to\{0,1\},\;x=\sum_{n=1}^\infty 2^{-f(n)}\right].\]

\subsubsection*{Intermediate value theorem for piecewise linear maps ${\sf IVT}_{\rm lin}$}

Next, we consider a (weird) variant of the intermediate value theorem.
A rational piecewise linear map on $\mathbb{R}$ is determined by finitely many rational points $\bar{p}=(p_0,\dots,p_\ell)$ in the plane.
Then, any continuous function on $\mathbb{R}$ can be represented as the limit of a sequence $(f_s)_{s\in\om}$ of rational piecewise linear maps coded by $(\bar{p}^s)_{s\in\om}$, where any $p^{s+1}_i$ belongs to the $2^{-s-1}$-neighborhood of the graph of $f_s$.
That is, the sequence $(\bar{p}^s)_{s\in\om}$ codes an approximation procedure of a continuous function $f$.
Then, consider the case that a code $(\bar{p}^s)_{s\in\om}$ of an approximation stabilizes, in the sense that $\bar{p}^s=\bar{p}^t$ for sufficiently large $s,t$.
Then, of course, it converges to a rational piecewise linear map.

In other words, we represent a continuous function as usual, but we consider the case that such a continuous function {\em happens to be a rational piecewise linear map}.
Formally, if $f$ is coded by $(\bar{p}^s)_{s\in\om}$ which does not change infinitely often (that is, $\neg\forall s\exists t>s\;\bar{p}^t\not=\bar{p}^s$) then we call $f$ {\em a rational piecewise linear map as a discrete limit}.
The principle ${\sf IVT}_{\rm lin}$ states that if $f\colon[0,1]\to\mathbb{R}$ is a rational piecewise linear map as a discrete limit such that $f(0)<0<f(1)$ then there is $x\in[0,1]$ such that $f(x)=0$.

\begin{prop}\label{prop:beQ-eq-wkl2}
${\sf BE}_\mathbb{Q}\longleftrightarrow{\sf WKL}_{=2}$.
\end{prop}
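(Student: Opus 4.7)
I plan to prove the equivalence by mutual reduction, refining the classical equivalence ${\sf BE}\leftrightarrow{\sf WKL}_{\leq 2}$ of~\cite{BIKN} and tracking the ``rational'' hypothesis on each side: the assumption $\neg\neg(x\in\mathbb{Q})$ on inputs of ${\sf BE}_\mathbb{Q}$ should mirror, under the standard interval-tree construction, the finite-branching-nodes hypothesis on inputs of ${\sf WKL}_{=2}$.

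For ${\sf WKL}_{=2}\Rightarrow{\sf BE}_\mathbb{Q}$, given a regular Cauchy real $x\in[0,1]$ with $\neg\neg(x\in\mathbb{Q})$, I form the interval tree
\[T_x=\{\sigma\in 2^{<\om}:x\in[\hat\sigma,\hat\sigma+2^{-|\sigma|}]\},\]
with the inequalities read as $\Pi^0_1$ conditions on the Cauchy data and $\hat\sigma$ denoting the dyadic rational encoded by $\sigma$; every infinite path through $T_x$ is a binary expansion of $x$. A direct computation shows that $\sigma$ is a branching node of $T_x$ iff $x=\hat\sigma+2^{-|\sigma|-1}$ sits at the midpoint of the corresponding dyadic interval, so $\neg\neg(x\in\mathbb{Q})$ forces $T_x$ to carry (double-negatively) only finitely many branching nodes. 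At each level where $T_x$ has only one string $\sigma$, I adjoin the bit-complement $\overline{\sigma}$ as a parallel ``companion'' to obtain a $2$-tree $T'_x$; since each companion extends the companion at the previous level and dies as soon as $T_x$ acquires two strings, this padding introduces no branching nodes beyond those already present in $T_x$ (plus the root), and so $T'_x$ remains a rational $2$-tree. Its infinite paths are exactly the binary expansions of $x$ and of $1-x$, so one call to ${\sf WKL}_{=2}$, followed by comparison of the resulting path (as a real) with the Cauchy data of $x$ (using ${\sf LLPO}$, which is derivable from ${\sf WKL}_{=2}$), returns either the path itself or its bitwise complement, i.e.\ a binary expansion of $x$.

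For ${\sf BE}_\mathbb{Q}\Rightarrow{\sf WKL}_{=2}$, I use the finite-branching-nodes hypothesis on $T$ to enumerate its branching nodes; these form a (double-negatively) finite set, so the enumeration stabilises on the deepest branching node $b$, from which I build a regular Cauchy representation of the dyadic rational $x_T=\hat{b}+2^{-|b|-1}$. The two binary expansions of $x_T$ are $b\cdot 0\cdot\overline{1}$ and $b\cdot 1\cdot\overline{0}$, so any output of ${\sf BE}_\mathbb{Q}(x_T)$ supplies, through the bit immediately after $b$, a choice of one of the two children of $b$ in $T$; above $b$, the absence of further branching in $T$ lets me computably extend that child to an infinite path of $T$.

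The main technical obstacle I foresee lies in the first direction: ensuring that the augmented tree $T'_x$ is simultaneously decidable (or at worst $\Pi^0_1$) as a subtree of $2^{<\om}$, strictly a $2$-tree (exactly two strings at every level $\geq 1$), and rational (with only double-negatively finitely many branching nodes), while still allowing the ``real'' expansion of $x$ to be extracted from $T'_x$'s two infinite paths. If the direct construction proves unwieldy, both reductions can instead be routed through the Weihrauch-equivalent choice principle ${\sf C}_{2^\om,\#=2}$ discussed earlier in the paper: on one side, build a two-element closed subset of $2^\om$ consisting of a binary expansion of $x$ together with a computable companion; on the other, recover a path of $T$ from the unique extension in $T$ of the dyadic prefix returned by ${\sf BE}_\mathbb{Q}$.
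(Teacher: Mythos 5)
The two directions of the proposal follow the same high-level plan as the paper's: from a rational $2$-tree construct a regular Cauchy rational whose binary expansion pins down a path, and from a regular Cauchy rational construct a rational $2$-tree whose paths give binary expansions. In the direction ${\sf BE}_\mathbb{Q}\Rightarrow{\sf WKL}_{=2}$ your sketch is essentially the one the paper relies on (the paper dismisses it as ``easy''), modulo the same unaddressed point of producing a \emph{regular} Cauchy sequence from a limit of midpoint guesses $\hat b_s+2^{-|b_s|-1}$ that may jump at late stages.

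The real gap is in ${\sf WKL}_{=2}\Rightarrow{\sf BE}_\mathbb{Q}$, and it is precisely the gap the paper's $T^\ast$-construction is designed to close. Your interval tree $T_x=\{\sigma:x\in[\hat\sigma,\hat\sigma+2^{-|\sigma|}]\}$ is only a $\Pi^0_1$ subtree given the Cauchy data $(q_n)$: one cannot decide at level $\ell$ whether $T_x$ has one node or two, which is exactly the information your companion rule needs (``adjoin $\overline\sigma$ if there is only one string, drop the companion once there are two''). If you replace $T_x$ by the decidable tree one actually builds from $(q_n)$ (by testing which dyadic cells intersect $[q_{n+2}-2^{-n-2},q_{n+2}+2^{-n-2}]$), that tree can have \emph{spurious} two-node levels that later collapse to one: the Cauchy approximations can wobble near dyadic endpoints indefinitely when $x$ is a non-dyadic rational such as $1/3$. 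Each such collapse forces you to reintroduce a companion, but a companion added late need not extend the companion you had before the spurious branching, so the bit-complement rule does not even give a tree, let alone a $2$-tree with only double-negatively finitely many branching nodes. Furthermore, your remark that ``$\neg\neg(x\in\mathbb{Q})$ forces $T_x$ to carry only finitely many branching nodes'' is beside the point: the ideal tree $T_x$ always has at most one branching node (it branches only at a level whose midpoint equals $x$), and the hypothesis $\neg\neg(x\in\mathbb{Q})$ is instead needed to control the spurious branching of the \emph{decidable} tree.

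The paper resolves this by building the usual decidable $2$-tree $T$ first and then, in a second pass, producing a rational $2$-tree $T^\ast$ by detecting (at increasing confidence) a candidate period $\sigma^\smallfrown\tau$ of the binary expansion and suppressing new branching of $T$ until either the period is refuted or a branching node of $T$ genuinely extends the current period. That argument is where the hypothesis $\neg\neg(x\in\mathbb{Q})$ does real work: periodicity guarantees the candidate $\sigma^\smallfrown\tau$ stabilizes and hence $T^\ast$ only branches finitely often. Your proposal has no analogue of this stabilization mechanism, and the fallback via ${\sf C}_{2^\om,\#=2}$ does not escape it: the closed set of binary expansions of $x$ has one or two elements depending on whether $x$ is dyadic, so the ``companion'' point you need to add to always have exactly two elements must disappear exactly when $x$ is dyadic, which is the same undecidable branching question in a different disguise.
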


\begin{proof}
For the forward direction, from a given rational $2$-tree $T$, one can easily construct a regular Cauchy real which happens to be a dyadic rational whose binary expansions are exactly infinite paths through $T$.
For the backward direction, from a regular Cauchy real $\alpha$, it is also easy to construct a $2$-tree $T$ whose infinite paths are binary expansions of $\alpha$ as usual.
Although the binary expansion of a rational $\alpha$ is periodic, i.e., of the form $\sigma\fr\tau\fr\tau\fr\tau\fr\dots$, this is insufficient for ensuring $T$ to be rational, so we need to construct another rational $2$-tree $T^\ast$ from $T$.

Use $\ell_s^\ast$ to denote the leftmost node of length $s$ in $T^\ast$, and $r_s^\ast$ to denote the rightmost node.
We inductively ensure that for any $s$, every node of length $s+1$ in $T$ is a successor of a node of length $s$ in $T^\ast$.
Fix $s$.
Let $b_{s+1}$ be the last branching node of length $\leq s+1$ in $T$.
Consider all decompositions $b_{s+1}=\sigma\fr\tau\fr\tau\fr\dots\fr\tau\fr(\tau\upto j)$, and then take the shortest $\sigma\fr\tau$.
Put $t=2|b_{s+1}|+1$.
If either $b_t=b_{s+1}$ or $\sigma\fr\tau$ is updated from the previous one, then define $\ell_s^\ast=\ell_s$ and $r_s^\ast=r_s$.
Otherwise, $b_{s+1}\not=b_t$, so either $\ell_{s+1}\preceq b_t$ or $r_{s+1}\preceq b_t$ holds.
If the former holds then, by induction hypothesis, $\ell_{s+1}$ is a successor of $u_s^\ast$, where $u\in\{\ell,r\}$.
Put $\overline{u}=r$ if $u=\ell$; otherwise $\overline{u}=\ell$.
Then, define $u^\ast_{s+1}=\ell_{s+1}$ and $\overline{u}^\ast_{s+1}=\overline{u}^\ast_{s}0$.
This ensures that $\ell^\ast_{s+1}\preceq b_t\preceq \ell_t,r_t$.
If $r_{s+1}\preceq b_t$ holds, replace $\ell_{s+1}$ with $r_{s+1}$ and vice versa.
This procedure adds no new branching node.
By periodicity of a binary expansion, $\sigma\fr\tau$ cannot be updated infinitely often.
Moreover, there cannot be infinitely many different $b_{s+1}$ such that $b_t=b_{s+1}$ happens.
This is because $b_t=b_{s+1}$ implies that $\ell_t=b_{s+1}01^{|b_{s+1}|}$ and $r_t=b_{s+1}10^{|b_{s+1}|}$, which witnesses that there is no seed $\sigma\fr\tau$ of periodicity of $\alpha$ in $b_{s+1}$.
Hence, if there are infinitely many such $b_{s+1}$, then $\alpha$ cannot be periodic.
Thus, $T^\ast$ is a rational $2$-tree.

Note that if $T$ is already rational then $T^\ast=T$.
Otherwise, $T$ has infinitely many branching nodes, and thus, we check arbitrary long decompositions, so the last branching node of $T^\ast$ is the first branching node in $T$ after finding the correct seed $\sigma\fr\tau$ of periodicity of $\alpha$.
In particular, any infinite path through $T^\ast$ extends $\sigma\fr\tau$.
Now, given an infinite path $p$ through $T^\ast$, for each $s$, check if $p\upto s\in T$.
If $s$ is the least number such that $p\upto s+1\not\in T$ then $T$ is not rational, and thus $p$ extends the first branching node $b\in T$ after finding the correct seed $\sigma\fr\tau$.
In particular, we must have $\sigma\fr\tau\preceq b\preceq p\upto s\in T$.
Now, consider the all decompositions $p\upto s=\mu\fr\nu\fr\nu\fr\dots\fr\nu\fr(\nu\upto j)$.
There are only finitely many such decompositions, and if a decomposition is incorrect, it is witnessed after seeing $T$ up to some finite height (which is effectively calculated from $s$).
Thus, after checking such a height, we see that all surviving decompositions are equivalent.
Therefore, a seed $\mu\fr\nu$ of such a decomposition is actually equivalent to the correct seed $\sigma\fr\tau$; hence $\mu\fr\nu\fr\nu\fr\nu\fr\dots$ is an infinite path through $T$.
\end{proof}

\begin{prop}\label{prop:lin-eq-clop}
${\sf IVT}_{\rm lin}\longleftrightarrow{\sf WKL}_{\rm clop}$.
\end{prop}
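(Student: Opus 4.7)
The plan is to prove both directions by direct construction, mirroring the backward direction of Proposition \ref{prop:beQ-eq-wkl2}.

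For ${\sf IVT}_{\rm lin} \to {\sf WKL}_{\rm clop}$, given an infinite convex clopen tree $T$, at each stage $s$ let $L_s, R_s$ denote the leftmost and rightmost strings of length $s$ in $T$, and set $a_s = L_s/2^s$ and $b_s = (R_s+1)/2^s$. Define $f_s$ to be the rational piecewise linear map determined by the knots $(0,-1), (a_s,0), (b_s,0), (1,1)$, so that $f_s$ is zero on $[a_s, b_s]$ and extends linearly to $-1$ at $0$ and $+1$ at $1$. Since $T$ is $\neg\neg$-clopen, $[T]$ is $\neg\neg$ a dyadic interval $[a,b]$ and the pair $(a_s,b_s)$ does not change infinitely often; hence $f := (f_s)$ is a rational piecewise linear map as a discrete limit with $f(0) < 0 < f(1)$. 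Applying ${\sf IVT}_{\rm lin}$ yields $x \in [a,b]$ with $f(x) = 0$, and any binary expansion of $x$ lying in $[a,b]$ is an infinite path through $T$.

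For ${\sf WKL}_{\rm clop} \to {\sf IVT}_{\rm lin}$, given a discrete-limit rational piecewise linear map $f$ with $f(0) < 0 < f(1)$, at each stage $s$ compute the leftmost zero interval $[c_s, d_s]$ of $f_s$ (with rational endpoints, possibly $c_s = d_s$). As $(\bar p^s)$ does not change infinitely often, $[c_s, d_s]$ stabilizes to $[c, d]$, the leftmost zero interval of $f_\infty$. The tree $T$ is built so that, at level $n$, a dyadic string $\sigma$ is included iff the dyadic interval $[\sigma/2^n, (\sigma+1)/2^n]$ intersects $[c_s, d_s]$ for all sufficiently large $s$. Then $T$ is convex, and $\neg\neg$-clopen: once $(\bar p^s)$ has stabilized, the rationality of the endpoints $c, d$ forces $T$ to eventually admit a finite cone description via the periodic binary expansion of $c$ and $d$, as in the backward direction of Proposition \ref{prop:beQ-eq-wkl2}. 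Applying ${\sf WKL}_{\rm clop}$ gives a path $p$ with $\mathrm{val}(p)$ in the closure of $[c,d]$ (modulo dyadic padding), from which an exact zero of $f$ is recovered using the known linear piece of $f_\infty$ at $\mathrm{val}(p)$.

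The main obstacle will be the case where $c = d$ is a single non-dyadic rational: the tree $T$ defined above branches at every dyadic approximation of $c$, and is only $\neg\neg$-clopen via exploiting the periodicity of $c$'s binary expansion. The seed-of-periodicity construction of Proposition \ref{prop:beQ-eq-wkl2} adapts to this setting, producing a tree $T$ whose stable finite cone description is witnessed by the seed detected from $f_s$'s piecewise linear knots. The final algebraic snap from $\mathrm{val}(p)$ to $c$ uses the slope and intercept of the linear piece of $f_\infty$ containing $c$, which become known once $(\bar p^s)$ has stabilized.
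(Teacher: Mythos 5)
Your overall architecture (encode the tree as a piecewise linear map, apply ${\sf IVT}_{\rm lin}$, and conversely extract a tree of binary expansions of the zero set and apply ${\sf WKL}_{\rm clop}$, reusing the seed-of-periodicity machinery from Proposition \ref{prop:beQ-eq-wkl2}) matches the paper. The backward direction is in the same spirit as the paper's, and the main difference is a vague ``intersects $[c_s,d_s]$ for all sufficiently large $s$'' quantifier that should be replaced by a stage-by-stage construction along the lines of the paper's sequence $I_s$; the seed-of-periodicity adaptation you sketch is exactly what the paper does.

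The forward direction, however, has a genuine gap, and it is precisely at the step you wave through: ``any binary expansion of $x$ lying in $[a,b]$ is an infinite path through $T$.'' With your nested-dyadic-interval encoding, ${\sf IVT}_{\rm lin}$ returns a regular Cauchy real $x$, and extracting from $x$ a binary expansion that lands in $[T]$ is exactly the kind of nonconstructive step ${\sf BE}$ makes available but that ${\sf IVT}_{\rm lin}$ alone does not. The problem is that consecutive dyadic intervals share endpoints: if $x$ happens to equal a dyadic rational on the boundary (for instance $x=1/2$ while the restriction of $T$ to one side of $1/2$, or the absence of any restriction at all, is only $\neg\neg$-determined), no Cauchy approximation of $x$ nor any finite portion of $T$ ever forces a bit of the path, because $T$ may carry dead ends to arbitrary depth before its clopen description becomes visible. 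The paper sidesteps this by embedding $2^{\om}$ into $[0,1]$ via the middle-thirds construction: each $\sigma$ is sent to a ternary interval $I_{\sigma}$, and because distinct $I_{\sigma}$ (whether incomparable or one extending the other) are pairwise disjoint and separated by positive gaps, the real $x$ sits strictly inside each ancestor region and the bits of the stem can be read off $x$'s ternary digits without ever having to resolve an ambiguous dyadic boundary. That separation is the missing idea: without it, the conversion from the zero to a path is not continuous, and the proof does not go through.
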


\begin{proof}
For the forward direction, one can assign an interval $I_\sigma$ to each binary string $\sigma$ in a standard manner: let $\tilde{\sigma}=\sigma[2/1]$ be the result of replacing every occurrence of $1$ in $\sigma$ with $2$, and then the endpoints of $I_\sigma$ are defined by $0.\tilde{\sigma}1$ and $0.\tilde{\sigma}2$ under the ternary expansions. 
Given a basic clopen tree $T=\{\tau:\tau\succeq\sigma\}$, it is easy to construct a piecewise linear map whose zeros are exactly $I_\sigma$.

For the backward direction, let $(f_s)$ be a descrete approximation  of a rational piecewise linear map $f$ with $f(0)<0<f(1)$.
Note that each $f_s$ is determined by finitely many points in the plane.
However, for each $s$, we only need to pay attention on at most four rational points $(x_i,f_s(x_i))_{i<4}$, where $x_0\leq x_1<x_2\leq x_3$, and $-2^{-s}>f_s(x_0)\leq f_s(x_1)\leq 0\leq f(x_2)\leq f(x_3)>2^{-s}$.
If $f(x_1)=f(x_2)$ then consider $I_s=[x_1,x_2]$.
Otherwise, by linearity, one can compute a unique rational $x_1<a_s<x_3$ such that $f(a_s)=0$.
Define $I_s$ as a sufficiently small rational neighborhood of $a_s$, and one can ensure that $(I_s)$ is a decreasing sequence.

Then, we define the tree $T^\ast$ of binary expansions of elements of $\bigcap_nI_n$.
If the rational interval $I_s$ is updated because of $f(x_1)=f(x_2)$, then one can easily compute finitely many binary strings $\sigma_0,\dots,\sigma_\ell$ of length $s$ such that any binary expansion of a real in $I_s$ extends some $\sigma_i$.
Then the nodes of length $s$ in $T^\ast$ are exactly $\sigma_0,\dots,\sigma_\ell$.
If we currently guess $f(a_s)=0$, then we proceed the argument in the proof of Proposition \ref{prop:beQ-eq-wkl2} to construct a basic clopen tree which determines a binary expansion of $a_s$.
%

Given an infinite path $p$ through $T^\ast$, for each $s$, check if $p\upto s$ has an extension in $I_s$.
If not, let $s$ be the least number such that $p\upto s+1\not\in T$.
Note that if $I_t$ is updated because of $f(x_1)=f(x_2)$ for some $t>s$ then we take some nodes $\sigma_0,\dots,\sigma_\ell$ of length $t$ and $p\upto s\preceq p\upto t=\sigma_i$ for some $i$, but $\sigma_i$ extends to some element in $I_t\subseteq I_s$.
Thus, this never happens.
If we guess $f(a_s)=0$, then as in the proof of Proposition \ref{prop:beQ-eq-wkl2} we get a binary expansion of $a_s$, so a regular Cauchy representation of $a_s$.
Otherwise, the current guess $I_s$ is already a correct solution, so just take any extension of $p\upto s$ which belongs to $I_s$.
\end{proof}

\subsection{Law of excluded middle as a Weihrauch problem}

Hereafter, we consider non-constructive principles $\Sigma^0_1\dmlr$, $\Sigma^0_2\dner$, and $\Sigma^0_2\dmlr$ as partial multifunctions.
First, recall that de Morgan's law $\Sigma^0_1\mbox{-}{\sf DML}_\mathbb{R}$ is formulated as ${\sf LLPO}$.

As a multifunction, $\Sigma^0_2\mbox{-}{\sf DNE}_\R$ is equivalent to the discrete limit operation $\limN$ (or equivalently, learnability with finite mind changes) in the Weihrauch context, where the discrete limit function $\limN$ is formalized as follows:
Given a sequence $(a_i)_{i\in\N}$ of natural numbers, if $a_i=a_j$ holds for sufficiently large $i,j$, then $\limN$ returns the value of such an $a_i$.
That is, $\limN$ is exactly the limit operation on the discrete space $\om$.

\begin{prop}\label{prop:dner-limN}
$\Sigma^0_2\mbox{-}{\sf DNE}_\R\equiv_{\sf W}{\limN}$.
\end{prop}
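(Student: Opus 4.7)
The plan is to view both $\Sigma^0_2\dner_\R$ and $\limN$ as partial multifunctions on Baire space and establish reductions in each direction. I model $\Sigma^0_2\dner_\R$ concretely as the partial multifunction whose input is (a code of) a decidable predicate $R\colon\N\times\N\to\{0,1\}$ satisfying the precondition $\exists n\,\forall m\, R(n,m)=1$, and whose output is some witness $n$. The operator $\limN$ takes an eventually constant sequence $(a_i)_{i\in\N}$ and returns its eventual value. Each reduction is a single application (no compositional product needed), so I will only have to exhibit a computable inner reduction and a computable outer reduction.

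For $\limN\leqW\Sigma^0_2\dner_\R$, from an input sequence $(a_i)_{i\in\N}$ I define the decidable predicate $R(\langle n,k\rangle, m) = 1$ iff $m<k$ or $a_m=n$. The classical $\Sigma^0_2$ assertion $\exists\langle n,k\rangle\,\forall m\, R(\langle n,k\rangle, m)=1$ is equivalent to $(a_i)_{i\in\N}$ being eventually constant, so the precondition holds precisely on $\dom(\limN)$. Feeding this $R$ to $\Sigma^0_2\dner_\R$ yields some witness $\langle n,k\rangle$, and the outer reduction returns $n$, which is necessarily the eventual value.

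For $\Sigma^0_2\dner_\R\leqW\limN$, given a matrix $R$ with $\exists n\,\forall m\, R(n,m)=1$, I define the sequence
\[a_i = \begin{cases}\min\{n\leq i: \forall m\leq i\; R(n,m)=1\}& \text{if such $n$ exists,}\\ i+1 & \text{otherwise.}\end{cases}\]
Letting $n^*$ denote the least true witness, one checks that once $i$ is large enough that every $n<n^*$ has been refuted at some $m\leq i$ and $i\geq n^*$, we have $a_i=n^*$; thus the sequence stabilizes at $n^*$. Calling $\limN$ on $(a_i)_{i\in\N}$ returns $n^*$, and the outer reduction is the identity.

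There is no substantive obstacle: both directions are uniformly computable and the verifications are routine. The only place care is needed is in pinning down the representation of $\Sigma^0_2\dner_\R$ as a multifunction on $\om^\om$ (the decidable matrix $R$ as input versus, say, a family of regular Cauchy reals); once the representation is fixed as above, the argument is essentially folklore in the Weihrauch programme, paralleling Brattka's characterizations of $\limN$ via $\Sigma^0_2$-witness extraction.
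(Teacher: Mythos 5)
Your proposal is correct and uses essentially the same key idea as the paper: for $\Sigma^0_2\dner_\R\leq_{\sf W}\limN$ both you and the paper construct the stabilizing guess sequence tracking the least $n$ not yet refuted (the paper phrases it as an algorithmic search driven by Markov's principle, you write the closed-form $a_i$, but these are the same sequence). You additionally make explicit the easy converse $\limN\leq_{\sf W}\Sigma^0_2\dner_\R$ (packaging eventual constancy as a $\Sigma^0_2$ witness $\langle n,k\rangle$ and projecting to $n$), which the paper's proof leaves implicit.
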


\begin{proof}
Assume that a sentence $\neg\neg\exists n\forall m R(n,m,\alpha)$ is given, where $R$ is decidable. 
Search for the least $n$ such that $\forall mR(n,m,\alpha)$ is true.
Begin with $n=0$, and if $\forall mR(n,m,\alpha)$ fails, then one can eventually find $m$ such that $R(n,m,\alpha)$ by Markov's principle.
If such an $m$ is found, proceed this algorithm with $n+1$.
If $\neg\neg\exists n\forall m R(n,m,\alpha)$ is true, then this algorithm eventually halts with the correct $n$ such that $\forall mR(n,m,\alpha)$ is true.
\end{proof}

First note that de Morgan's law $\Sigma^0_2\dmlr$ for $\Sigma^0_2$ formulas is essentially equivalent to the pigeonhole principle on $\N$, or Ramsey's theorem ${\sf RT}^1_2$ for singletons and two colors.
Here, ${\sf RT}^1_2$ (as a multifunction) states that, given a function $f\colon\N\to 2$ (which is called a {\em $2$-coloring}), returns $h\colon\N\to 2$ such that $f$ is constant on $H=\{n:h(n)=1\}$, where $H$ is called a {\em homogeneous set}. 

\begin{prop}\label{prop:dmlr-rt12}
$\Sigma^0_2\dmlr\equiv_{\sf W}{\sf RT}^1_2$.
\end{prop}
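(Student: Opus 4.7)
The plan is to adopt the pigeonhole reformulation of $\Sigma^0_2\dmlr$ stated just before the proposition: an instance will be a pair $(A,B)$ of decidable subsets of $\N$ (given by their characteristic functions) satisfying the promise $\neg(\text{$A$ and $B$ are both bounded})$, and a solution will be an index $i\in\{0,1\}$ pointing to an unbounded one. Under this reformulation I expect both directions of the Weihrauch equivalence to come out via short, computable reductions.

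I will handle ${\sf RT}^1_2\leq_{\sf W}\Sigma^0_2\dmlr$ first, since it is essentially immediate. Given a $2$-coloring $f\colon\N\to 2$, feed $\Sigma^0_2\dmlr$ the pair $(A,B)=(f^{-1}(0),f^{-1}(1))$, which is computable from $f$. Because $A\cup B=\N$, at least one of the two sets is unbounded, so the promise holds automatically. Whichever index $i$ is returned, $f^{-1}(i)$ is an infinite homogeneous set for $f$, and its characteristic function is a valid output of ${\sf RT}^1_2$.

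For $\Sigma^0_2\dmlr\leq_{\sf W}{\sf RT}^1_2$ I will use a counting trick. Given $(A,B)$, I define the computable coloring
\[
f(n)=\begin{cases}0 & \text{if } |A\cap[0,n)|>|B\cap[0,n)|,\\ 1 & \text{otherwise,}\end{cases}
\]
and pass $f$ to ${\sf RT}^1_2$. From the returned $h\colon\N\to 2$ I computably search for the least $n^\ast$ with $h(n^\ast)=1$ and output $c:=f(n^\ast)$. Both $(A,B)\mapsto f$ and $((A,B),h)\mapsto c$ are computable. To verify correctness I will exploit that the functions $n\mapsto|A\cap[0,n)|$ and $n\mapsto|B\cap[0,n)|$ are nondecreasing: if $c=0$ then infinitely many $n$ satisfy $|A\cap[0,n)|>|B\cap[0,n)|$, so if $A$ were bounded by some $k$ this would force $|B\cap[0,n)|<k$ for infinitely many $n$ and hence, by monotonicity, for \emph{every} $n$, making $B$ bounded too and violating the promise; therefore $A$ must be unbounded. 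The case $c=1$ is symmetric and identifies $B$ as unbounded.

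The main --- and really only --- delicate step will be this last correctness argument, namely confirming that under the promise the homogeneous color is enough to identify which of $A,B$ is unbounded. Everything else amounts to a routine verification that the inner and outer reductions are computable.
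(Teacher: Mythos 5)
Your argument is correct, and the direction ${\sf RT}^1_2\leq_{\sf W}\Sigma^0_2\dmlr$ matches the paper's reduction. For $\Sigma^0_2\dmlr\leq_{\sf W}{\sf RT}^1_2$ you take a genuinely different route. The paper works directly with a pair of $\Sigma^0_2$ formulas $\psi_i\equiv\exists a\forall b\,\varphi_i(a,b)$ and colours each $a$ by which of $\varphi_0(a,\cdot),\varphi_1(a,\cdot)$ fails first; you instead pass to the bounded-set presentation $(A,B)$ and colour $n$ by which running count $|A\cap[0,n)|$, $|B\cap[0,n)|$ is currently ahead. The two colourings buy slightly different things. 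Your counting colouring rests entirely on the monotonicity of $n\mapsto|A\cap[0,n)|$: the homogeneous colour forces one count to lead infinitely often, and monotonicity propagates that to all $n$, bounding the loser and contradicting the promise unless the returned side is unbounded. The paper's ``first-failure'' colouring is also correct on bounded-set instances, since there $\varphi_i(a,b)\equiv(b\in A_i\to b<a)$ is monotone in $a$, which is what lets ``$\exists b\,\neg\varphi_j(a,b)$ for $a$ in the infinite homogeneous set $H$'' propagate to all $a$; for a literally arbitrary $\Sigma^0_2$ pair one must first normalise $\varphi_i$ to a monotone form before that last propagation step is licensed, and your colouring quietly sidesteps that point. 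The one thing worth making explicit in your write-up is the reduction of $\Sigma^0_2\dmlr$ proper to the bounded-set form: the paper's prose appeals to this very proposition for that identification, so stating the short encoding closes the loop --- for instance, $\exists a\forall b\,\varphi(a,b)$ is uniformly equivalent to boundedness of the decidable set of stages $n$ at which the tracker $c(n)$ increases, where $c(n)$ is the least $a\leq n$ with $\forall b\leq n\,\varphi(a,b)$ (or $n+1$ if none), a nondecreasing sequence that stabilises iff the formula holds. With that remark in place your proof is complete, and for the hard direction somewhat more robust than the paper's.
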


\begin{proof}
For ${\sf RT}^1_2\leqW\Sigma^0_2\dmlr$, given a coloring $f\colon\N\to 2$, consider (the characteristic function of) the set $C_i=\{n:f(n)=i\}$.
Either $C_0$ or $C_1$ is unbounded, so $\Sigma^0_2\dmlr$ chooses an index $j$ such that $C_j$ is unbounded (where boundedness is a $\Sigma^0_2$ property).
Then, $C_j$ must be an infinite homogeneous set.

For $\Sigma^0_2\dmlr\leqW{\sf RT}^1_2$, assume that $\Sigma^0_2$ formulas $\psi_i\equiv\exists a\forall b\varphi_i(a,b)$ are given.
If $\psi_0\land\psi_1$ is false, then by Markov's principle, for any $a$, either $\exists b\neg\varphi_0(a,b)$ or $\exists b\neg\varphi_1(a,b)$.
Hence, given $a$, search for the least $b$ such that either $\neg\varphi_0(a,b)$ or $\neg\varphi_1(a,b)$ holds.
For such $b$, if $\varphi_0(a,b)$ fails, then put $f(a)=0$; otherwise put $f(a)=1$.
By ${\sf RT}^1_2$, we get an infinite homogeneous set $H$ for $f$.
Then given any $a\in H$, compute the value $f(a)=j$.
This $j$ is a solution to $\Sigma^0_2\dmlr$ since for any $a\in H$ we have $\exists b\neg\varphi_j(a,b)$.
\end{proof}


\section{Main Proof}

\subsection{Weihrauch separations}\label{sec:Weihrauch-separation}

In order to prove Theorem \ref{thm:main-theorem}, we will lift separation results on Weihrauch degrees to unprovability results over ${\bf IZF}$.
In this section, we formalize and prove statements on Weihrauch degrees which are needed to prove Theorem \ref{thm:main-theorem}.
We will show the following $\leq_{\gW}^c$-separation results (see also Figure \ref{figure:principles-over-izf}):

\begin{theorem}\label{thm:main-sub-GW}~
\begin{enumerate}
\item ${\sf RDIV}\not\leqcGW{\sf BE}$.
\item ${\sf BE}_\mathbb{Q}\not\leqcGW{\sf RDIV}$.
\item ${\sf IVT}_{\rm lin}\not\leqcGW {\sf RDIV}\times{\sf BE}$.
\item ${\sf BE}\not\leqcGW\Sigma^0_2\dmlr\times\Sigma^0_2\dner$.
\item ${\sf IVT}\not\leqcGW\Sigma^0_2\dmlr\times\Sigma^0_2\dner\times{\sf BE}$.
\end{enumerate}
\end{theorem}

Part of this is used for ${\bf IZF}$-separation; however, not every $\leq_{\gW}^c$-separation result yields an ${\bf IZF}$-separation result.
For instance, $\Sigma^0_2\dmlr$ and $\Sigma^0_2\dner$ are stronger than ${\sf LPO}$, which do not satisfy ${\sf rCT}!$.
Since ${\sf LPO}$ is single-valued, the corresponding realizability validates the axiom of countable choice ${\sf AC}_\om$, and since it realizes ${\sf LLPO}$, combining these two realizes ${\sf WKL}$.
In particular, this cannot separate principles weaker than ${\sf WKL}$ as discussed here.
Therefore, this provides a concrete counterexample that $\leq_{\gW}^c$-separation does not necessarily imply ${\bf IZF}$-separation.

\subsubsection{The strength of choice for finite sets}

We first examine the strength of the game reinforcement of weak K\"onig's lemma for $2$-trees, $({\sf WKL}_{\leq 2})^{\clo}$.
It is not hard to see that $({\sf WKL}_{\leq_2})^{\clo}$ can be reduced to the finite parallelization ${\sf WKL}_{\leq 2}^\ast$ of ${\sf WKL}_{\leq 2}$; see Le Roux-Pauly \cite{LRPa15} (where ${\sf WKL}_{\leq n}\star{\sf WKL}_{\leq m}\leq_{\sf W}{\sf WKL}_{\leq nm}$ is claimed) or Pauly-Tsuiki \cite{PaTs16}: ${\sf C}^{\clo}_{2^\om,\#\leq 2}\equiv_{\sf W}\bigsqcup_n{\sf C}_{2^\om,\#\leq n}$.

\begin{fact}\label{wkl2-game-strength}
$({\sf WKL}_{\leq 2})^{\clo}\equiv_{\sf W}({\sf WKL}_{\leq 2})^\ast$.
\end{fact}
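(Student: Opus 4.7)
The plan is to combine two ingredients quoted earlier in the excerpt: Westrick's characterization of $(\cdot)^\Game$ as the least fixed point of $h\mapsto h\star h$ above $\mathbf{1}$, and Le Roux--Pauly's composition bound ${\sf WKL}_{\leq n}\star{\sf WKL}_{\leq m}\leq_{\sf W}{\sf WKL}_{\leq nm}$.

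The direction ${\sf WKL}_{\leq 2}^\ast\leq_{\sf W}({\sf WKL}_{\leq 2})^\Game$ is routine. By transitivity of $\leq_{\Game{\sf W}}$ we have $({\sf WKL}_{\leq 2})^\Game\star({\sf WKL}_{\leq 2})^\Game\equiv_{\sf W}({\sf WKL}_{\leq 2})^\Game$, so an easy induction on $n$ yields $({\sf WKL}_{\leq 2})^{(n)}\leq_{\sf W}({\sf WKL}_{\leq 2})^{\star n}\leq_{\sf W}({\sf WKL}_{\leq 2})^\Game$, and the coproduct ${\sf WKL}_{\leq 2}^\ast=\bigsqcup_n({\sf WKL}_{\leq 2})^{(n)}$ inherits the bound.

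For the main direction $({\sf WKL}_{\leq 2})^\Game\leq_{\sf W}{\sf WKL}_{\leq 2}^\ast$, I will show that ${\sf WKL}_{\leq 2}^\ast$ is itself $\star$-closed, and then invoke Westrick. The key intermediate step is the Weihrauch equivalence $({\sf WKL}_{\leq 2})^{(k)}\equiv_{\sf W}{\sf WKL}_{\leq 2^k}$, whose ``$\leq_{\sf W}$''-half pairs $k$ many 2-trees componentwise into a single tree with at most $2^k$ strings of each length, and whose ``$\geq_{\sf W}$''-half indexes the at most $2^k$ level-nodes of a $\leq 2^k$-tree by $k$-bit addresses and extracts each address-bit through an independent 2-valued choice. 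Chaining this with Le Roux--Pauly gives
\[
({\sf WKL}_{\leq 2})^{(n)}\star({\sf WKL}_{\leq 2})^{(m)}\equiv_{\sf W}{\sf WKL}_{\leq 2^n}\star{\sf WKL}_{\leq 2^m}\leq_{\sf W}{\sf WKL}_{\leq 2^{n+m}}\equiv_{\sf W}({\sf WKL}_{\leq 2})^{(n+m)}\leq_{\sf W}{\sf WKL}_{\leq 2}^\ast,
\]
so ${\sf WKL}_{\leq 2}^\ast\star{\sf WKL}_{\leq 2}^\ast\leq_{\sf W}{\sf WKL}_{\leq 2}^\ast$. Since ${\sf WKL}_{\leq 2}^\ast\geq_{\sf W}\mathbf{1}$, Westrick's theorem yields $({\sf WKL}_{\leq 2}^\ast)^\Game\leq_{\sf W}{\sf WKL}_{\leq 2}^\ast$, and monotonicity of $(\cdot)^\Game$ applied to ${\sf WKL}_{\leq 2}\leq_{\sf W}{\sf WKL}_{\leq 2}^\ast$ finishes the argument.

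The main obstacle is the ``$\geq_{\sf W}$''-half of $({\sf WKL}_{\leq 2})^{(k)}\equiv_{\sf W}{\sf WKL}_{\leq 2^k}$: it requires a uniformly computable $k$-bit indexing of the nodes at each level of a $\leq 2^k$-tree that is stable under tree extensions, so that each address-bit becomes a well-defined binary choice. A naive lexicographic indexing fails, because when one of a pair of sibling nodes eventually becomes a dead end the lex ordering at later levels may no longer respect extensions; the fix is a persistent reassignment strategy which, once a merge has been witnessed, canonically redistributes addresses among the surviving nodes. This is precisely the standard decomposition of bounded closed choice on Cantor space carried out in the references \cite{LRPa15,PaTs16} cited just before the fact.
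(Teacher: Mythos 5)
The paper does not actually give a proof of this Fact: it simply cites Le~Roux--Pauly for the composition bound ${\sf WKL}_{\leq n}\star{\sf WKL}_{\leq m}\leq_{\sf W}{\sf WKL}_{\leq nm}$ and Pauly--Tsuiki for the direct equivalence ${\sf C}^\Game_{2^\om,\#\leq 2}\equiv_{\sf W}\bigsqcup_n{\sf C}_{2^\om,\#\leq n}$, leaving it to the reader to assemble the pieces. Your proposal genuinely unpacks the claim along a different axis than what those citations suggest: instead of invoking Pauly--Tsuiki's equivalence outright, or invoking the compactness observation that the author uses later (the argument around Observation~3.9, which bounds the game length a priori), you establish $\star$-closure of ${\sf WKL}_{\leq 2}^\ast$ and then invoke Westrick's least-fixed-point theorem together with monotonicity of $(\cdot)^\Game$. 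This is a clean structural alternative and it does work, provided the decomposition ${\sf WKL}_{\leq 2^k}\leq_{\sf W}({\sf WKL}_{\leq 2})^\ast$ is granted, which you and the paper both attribute to the same references.

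Two points are worth flagging, neither of which is fatal. First, you assert the exact Weihrauch equivalence $({\sf WKL}_{\leq 2})^{(k)}\equiv_{\sf W}{\sf WKL}_{\leq 2^k}$ and justify its $\geq_{\sf W}$-half by a $k$-bit addressing of the level-nodes; the Le~Roux--Pauly decomposition of finite choice does not obviously give the tight exponent $k$ of \emph{parallel} copies (the standard argument tends to need on the order of $n-1$ copies of two-element choice to simulate $n$-element choice), so the claimed equivalence may be an overstatement. However, your proof only needs ${\sf WKL}_{\leq 2^{n+m}}\leq_{\sf W}{\sf WKL}_{\leq 2}^\ast$ with \emph{some} finite exponent, so the chain and the conclusion ${\sf WKL}_{\leq 2}^\ast\star{\sf WKL}_{\leq 2}^\ast\leq_{\sf W}{\sf WKL}_{\leq 2}^\ast$ survive. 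Second, passing from the displayed bound on $({\sf WKL}_{\leq 2})^{(n)}\star({\sf WKL}_{\leq 2})^{(m)}$ to $\star$-closure of the coproduct $\bigsqcup_n({\sf WKL}_{\leq 2})^n$ silently uses that $\star$ commutes with the coproduct here; this requires the inner index to bound the outer one, which for these effectively compact problems holds by a compactness argument, but is an extra step you should make explicit if this were written out fully. Also note the small notational slip: you write ${\sf WKL}_{\leq 2}^\ast=\bigsqcup_n({\sf WKL}_{\leq 2})^{(n)}$, using the paper's superscript $(n)$ for the compositional iterate, whereas the finite parallelization is $\bigsqcup_n({\sf WKL}_{\leq 2})^n$ with the parallel product; your easy direction works either way because parallel is below compositional, but the identity as written is not the definition.
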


We first show that $({\sf WKL}_{\leq 2})^{\clo}$ is not strong enough for solving weak K\"onig's lemma ${\sf WKL}_{\rm aou}$ for all-or-unique trees:

\begin{prop}\label{prop:sep-main5}
${\sf WKL}_{\rm aou}\not\leqcW({\sf WKL}_{\leq 2})^{\clo}$.
\end{prop}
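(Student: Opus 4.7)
The plan is to invoke Fact~\ref{wkl2-game-strength} to replace $({\sf WKL}_{\leq 2})^\Game$ with the Weihrauch-equivalent finite parallelization $({\sf WKL}_{\leq 2})^\ast$, and then derive a contradiction by a compactness and cardinality argument exploiting the continuity of the reduction. Suppose for contradiction that continuous $H\colon T\mapsto(n,T_0,\dots,T_{n-1})$ and $K\colon(T,\bar p)\mapsto q$ witness ${\sf WKL}_{\rm aou}\leqcW({\sf WKL}_{\leq 2})^\ast$. For $s\geq 1$ and $p\in 2^\om$, let $T^\ast_{s,p}$ denote the aou-tree containing every binary string of length $<s$ and uniquely continuing along $p$ thereafter, so $[T^\ast_{s,p}]=\{p\}$. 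Since the aou-trees form a closed, hence compact, subset of $2^{2^{<\om}}$ and $n$ is a continuous $\N$-valued map, $n$ is locally constant; with $n_0:=n(2^{<\om})$, there is a threshold $s_0$ such that $n(T^\ast_{s,p})=n_0$ uniformly for all $s>s_0$ and all $p\in 2^\om$.

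The heart of the argument is to show that $\bar p\mapsto K(2^{<\om},\bar p)$ surjects onto $2^\om$. Fix $p\in 2^\om$. For each $s>s_0$, the equality $[T^\ast_{s,p}]=\{p\}$ forces $K(T^\ast_{s,p},\bar p^{(s)})=p$ for every valid tuple $\bar p^{(s)}\in\prod_{i<n_0}[T_i(T^\ast_{s,p})]$; such tuples exist because each $T_i(T^\ast_{s,p})$ is an infinite $2$-tree and hence has a path by K\"onig's lemma, the reduction mapping into the domain of ${\sf WKL}_{\leq 2}^\ast$. Now $T^\ast_{s,p}\to 2^{<\om}$ pointwise in $2^{2^{<\om}}$ as $s\to\infty$, so continuity of $H$ gives $T_i(T^\ast_{s,p})\to T_i(2^{<\om})$ for each $i<n_0$. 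By compactness of $(2^\om)^{n_0}$, pass to a subsequence with $\bar p^{(s_k)}\to\bar p^\infty$; a short upper-semicontinuity check places $\bar p^\infty$ in $\prod_i[T_i(2^{<\om})]$, using that for each level $\ell$, eventually $\bar p^\infty_i\upto\ell$ equals $\bar p^{(s_k)}_i\upto\ell$, which lies in $T_i(T^\ast_{s_k,p})$ and hence in $T_i(2^{<\om})$ on strings of length $\leq\ell$ for all large $k$. Continuity of $K$ then yields $K(2^{<\om},\bar p^\infty)=\lim_k K(T^\ast_{s_k,p},\bar p^{(s_k)})=p$.

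Since $p\in 2^\om$ was arbitrary, $\bar p\mapsto K(2^{<\om},\bar p)$ restricted to $\prod_i[T_i(2^{<\om})]$ surjects onto $2^\om$. But each $T_i(2^{<\om})$ is a $2$-tree with at most two infinite paths, so the domain has at most $2^{n_0}$ elements, contradicting the uncountability of $2^\om$. The step that warrants the most care is the upper-semicontinuity of the infinite-path map --- verifying that a limit of paths through a convergent sequence of $2$-trees remains a path through the limit tree; this is routine via pointwise convergence but should be written out. A secondary technicality is extracting the uniform threshold $s_0$ for the arity, which follows from uniform continuity of $n$ on the compact space of aou-trees.
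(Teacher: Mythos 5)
Your argument is correct, and it takes a genuinely different route from the paper. Both proofs begin by invoking Fact~\ref{wkl2-game-strength} to reduce to $({\sf WKL}_{\leq 2})^\ast$, but then diverge. The paper's proof is a {\em recursion-trick} diagonalization: using Kleene's recursion theorem, Player~I constructs (via self-reference) an aou-tree that waits until the arity $n$ of Player~II's parallel query and all of the (at most $2^{n+1}$) candidate outputs $\alpha_x\upto(n+2)$ are committed, then plants its unique branch along a string of length $n+2$ avoiding all of them. Your proof avoids self-reference entirely and works purely topologically: you specialize the (continuous) inner reduction $H$ at the degenerate aou-trees $T^\ast_{s,p}$ and at the full binary tree $2^{<\om}$, use local constancy of the arity $n$ near $2^{<\om}$ plus compactness of $(2^\om)^{n_0}$ to pass to a convergent subsequence, verify upper semicontinuity of the path functor to land $\bar p^\infty$ in $\prod_i[T_i(2^{<\om})]$, and conclude by continuity of $K$ that $K(2^{<\om},\cdot)$ surjects a set of size at most $2^{n_0}$ onto $2^\om$. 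Both proofs ultimately hinge on the same cardinality bottleneck (finitely many candidates versus continuum-many paths), but yours isolates it as a clean non-effective limit argument — well suited to continuous Weihrauch separations — whereas the paper's recursion-trick template is what it then reuses, with more moving parts, for the harder separations (Propositions~\ref{prop:sep-main4}, \ref{prop:sep-main6}, \ref{prop:sep-main78}) where compactness of the target degree is more delicate. One point worth making explicit in your write-up: the full tree $2^{<\om}$ is itself a valid aou-tree with $[2^{<\om}]=2^\om$, which is exactly what lets you evaluate $H$ and $K$ at the limit point.
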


\begin{proof}
By Fact \ref{wkl2-game-strength}, it suffices to show that ${\sf WKL}_{\rm aou}\not\leqW({\sf WKL}_{\leq 2})^\ast$.
We now consider the following play:
\[
\begin{array}{rcccc}
{\rm I}\colon	& T		&		& x=\langle x_0,\dots,x_n\rangle		&	\\
{\rm II}\colon	&		& \langle S_0,\dots,S_n\rangle	&		& \alpha_{x}
\end{array}
\]

Player II tries to construct a sequence $S=\langle S_0,\dots,S_n\rangle$ of $2$-trees, and a path $\alpha_{x}$ through I's aou-tree $T$, where the second move $\alpha_{x}$ depends on Player I's second move $x$.
As usual, each partial continuous function $f$ is coded as an element $p$ of $(\om\cup\{\bot\})^\om$, so one can read a finite portion $p[s]=\langle p(0),p(1),\dots,p(s-1)\rangle$ of $p$ by stage $s$.
More precisely, $p(s)=\langle n,m\rangle$ indicates that we obtain the information $f(n)\downarrow=m$ at stage $s+1$, but if $p(s)=\bot$ we get no information.

By {\em recursion trick} (Section \ref{sec:recursion-trick}), it is sufficient to describe an algorithm constructing an aou-tree $T$ from given $S,\alpha$ such that Player I wins along the above play for some $x$.
Now, the algorithm is given as follows:
\begin{enumerate}
\item At each stage $s$, either $T$ contains all binary strings of length $s$ or $T$ only has a single node of length $s$.
If we do not act at stage $s$ (while our algorithm works), we always assume that $T$ contains all binary strings of length $s+1$. 
\item Wait for $n$ (the length of $S$ in Player II's first move) being determined.
Then wait for $\alpha_x\upto n+2$ being defined for any correct $x$; that is, any $x$ with $x_i\in [S_i]$.
By compactness, if it happens, it is witnessed by some finite stage.
\item As we consider $n+1$ many $2$-trees $S_0,\dots,S_n$, there are at most $2^{n+1}$ many correct $x$'s, so there are at most $2^{n+1}$ many $\alpha_x$'s.
Clearly, there are $2^{n+2}$ nodes of length $n+2$, so choose a binary string $\sigma$ of length $n+2$ which is different from any $\alpha_x\upto n+2$.
Then, we declare that $[T]=\{\sigma\fr 0^\om\}$, and our algorithm halts.
\end{enumerate}

Note that if the procedure arrives at (3) then $\alpha_x$ cannot be a path through $T$.
Otherwise, the algorithm waits at (2) forever, which means that $\alpha_x$ is not total for some correct $x$, and by (1) we have $T=2^{<\om}$, so Player I obeys the rule.

To apply recursion trick, even if Player II violates the rule, the first move $T$ of Player I needs to obey the rule.
To ensure this, one may assume that we proceed the step (3) in our algorithm only if Player II's partial trees look like $2$-trees.
Moreover, our algorithm performs some action on $T$ only once; hence, in any case, $T$ must be an aou-tree.
Hence, Player I wins.
\end{proof}


\subsubsection{The strength of all-of-unique choice}

We next examine the strength of  the game reinforcement of weak K\"onig's lemma for all-or-unique trees, $({\sf WKL}_{\rm aou})^{\clo}$.
Kihara-Pauly \cite{pauly-kihara2-mfcs} showed that the hierarchy of compositional products of the finite parallelization ${\sf AoUC}^\ast$ of the all-or-unique choice ${\sf AoUC}$ collapses after the second level (where recall that ${\sf AoUC}$ is Weihrauch equivalent to ${\sf WKL}_{\rm aou}$).
In particular, we have the following:

\begin{fact}\label{fact:KP-AoUC-closed}
$({\sf WKL}_{\rm aou})^{\clo}\equiv_{\sf W}({\sf WKL}_{\rm aou})^\ast\star({\sf WKL}_{\rm aou})^\ast$
\end{fact}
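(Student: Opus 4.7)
The plan is to prove the two inequalities separately. The inequality $({\sf WKL}_{\rm aou})^\ast \star ({\sf WKL}_{\rm aou})^\ast \leq_{\sf W} ({\sf WKL}_{\rm aou})^\Game$ is routine: by Hirschfeldt-Jockusch's transitivity of $\leq_{\Game{\sf W}}$ one has $g^\Game \star g^\Game \equiv_{\sf W} g^\Game$ for any $g$, and a batch of $n$ parallel queries to ${\sf WKL}_{\rm aou}$ is faithfully simulated by $n$ consecutive rounds of the reduction game (the queries being independent, the ordering is immaterial), giving $({\sf WKL}_{\rm aou})^\ast \leq_{\sf W} ({\sf WKL}_{\rm aou})^\Game$. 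Composing these two observations yields the claim.

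For the converse $({\sf WKL}_{\rm aou})^\Game \leq_{\sf W} ({\sf WKL}_{\rm aou})^\ast \star ({\sf WKL}_{\rm aou})^\ast$, the plan is to reduce to the level-two collapse result of Kihara-Pauly for ${\sf AoUC}^\ast \equiv_{\sf W} ({\sf WKL}_{\rm aou})^\ast$, which asserts the Weihrauch equivalence $({\sf WKL}_{\rm aou})^\ast \star ({\sf WKL}_{\rm aou})^\ast \star ({\sf WKL}_{\rm aou})^\ast \star ({\sf WKL}_{\rm aou})^\ast \equiv_{\sf W} ({\sf WKL}_{\rm aou})^\ast \star ({\sf WKL}_{\rm aou})^\ast$. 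Setting $g := ({\sf WKL}_{\rm aou})^\ast \star ({\sf WKL}_{\rm aou})^\ast$, we have $\mathbf{1} \leq_{\sf W} g$ trivially and $g \star g \equiv_{\sf W} g$ by the collapse, so Westrick's fixed-point characterization (cited in Section 2.1.2) gives $g^\Game \leq_{\sf W} g$. Since ${\sf WKL}_{\rm aou} \leq_{\sf W} g$ monotonically yields $({\sf WKL}_{\rm aou})^\Game \leq_{\sf W} g^\Game$, combining produces the desired reduction.

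The only substantive work is therefore the level-two collapse itself, which is the content of Kihara-Pauly. Its proof exploits the rigid all-or-unique structure of the underlying trees: each query returns either total freedom (the full binary tree) or a fully determined path (a singleton). A first parallel batch of ${\sf AoUC}$-queries can simultaneously resolve the "determined" portions across all reachable branches of Player II's strategy tree, and once those are fixed the remaining uncertainty lives along the "free" branches, which a second parallel batch handles, eliminating any need for a third round. The main obstacle in carrying this out directly would be the bookkeeping needed to bound the size of each batch, which uses compactness of $2^\omega$ together with the continuity of Player II's strategy to ensure that only finitely many queries are relevant at each stage; since this bookkeeping is already worked out in Kihara-Pauly, the proof here reduces to invoking it through the closure argument above.
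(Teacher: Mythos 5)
Your derivation is correct, and it fills in what the paper leaves implicit: Fact~\ref{fact:KP-AoUC-closed} is stated as a citation (it follows, the author says, from Kihara--Pauly's result that the $\star$-hierarchy over $({\sf WKL}_{\rm aou})^\ast$ collapses after the second level), and no proof appears in the text. Your argument---the easy direction by $g^\ast\leq_{\sf W}g^\Game$ and $g^\Game\star g^\Game\equiv_{\sf W}g^\Game$, and the hard direction by taking $g=({\sf WKL}_{\rm aou})^\ast\star({\sf WKL}_{\rm aou})^\ast$, observing $g\star g\equiv_{\sf W}g$ from the Kihara--Pauly collapse, and applying Westrick's least-fixed-point characterization of $\Game$ together with monotonicity---is exactly the intended routine derivation, and it uses only ingredients already set up in the paper (Westrick's theorem is cited in Section~2.1.2). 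One small caution: the paper's informal paraphrase of Westrick's result, ``$g\leq_{\sf W}g\star g$ implies $g^\Game\leq_{\sf W}g$,'' must be read as ``$g\star g\leq_{\sf W}g$ implies $g^\Game\leq_{\sf W}g$'' (the hypothesis should be that $g$ is a fixed point of $h\mapsto h\star h$, which is what your application in fact uses); as literally printed it would be vacuously strong. Your closing paragraph sketching the internal mechanism of the Kihara--Pauly collapse is not needed, since that result is an external citation, but it does not introduce any error.
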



We first verify that $({\sf WKL}_{\rm aou})^{\clo}$ is not strong enough for solving weak K\"onig's lemma ${\sf WKL}_{=2}$ for rational $2$-trees:

\begin{prop}\label{prop:sep-main4}
$\wklQ\not\leqcW({\sf WKL}_{\rm aou})^{\clo}$.
\end{prop}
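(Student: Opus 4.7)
The plan is to apply Fact~\ref{fact:KP-AoUC-closed} to reduce the claim to $\wklQ \not\leq^c_{\sf W} ({\sf WKL}_{\rm aou})^\ast \star ({\sf WKL}_{\rm aou})^\ast$, giving a bounded 2-round reduction game amenable to the recursion trick of Section~\ref{sec:recursion-trick}. In the resulting game, Player I opens with a rational 2-tree $T$; Player II issues a finite parallel aou-query $\vec{S}^1$; Player I answers with paths $\vec{p}^1$; Player II issues $\vec{S}^2$ (depending on $\vec{p}^1$); Player I answers with $\vec{p}^2$; and Player II finally declares a path $\alpha$, which must lie in $[T]$ to win.

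Fix an arbitrary continuous strategy $\tau$ for Player II and fix Player I's response policy: for each aou-query $S$, reply with the unique committed path if $S$ ever commits, and with $0^\om$ otherwise. This is always a valid path through $S$, so Player I never violates the rule, and with Player I's responses so determined Player II's eventual output becomes a (non-continuous, but definable) function $F(T)$ of the opening move $T$.

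The recursion trick is then applied to construct $T$ as the rational 2-tree with a single branching at level 1 and two infinite paths
\[
\beta_0 = 0 \cdot (1 - F(T)(1)) \cdot 0^\om \qquad \text{and} \qquad \beta_1 = 1 \cdot (1 - F(T)(1)) \cdot 0^\om.
\]
Kleene's recursion theorem, invoked as in the recursion trick, yields a fixed-point tree $T^\ast$ satisfying this description. If $\tau$ eventually emits $F(T^\ast)(1) = b$, then $\beta_0(1) = \beta_1(1) = 1 - b \neq b = \alpha(1)$, so $\alpha = F(T^\ast) \notin [T^\ast]$ and Player I wins. If $\tau$ never emits that bit, the algorithm commits $T^\ast$'s level 2 to a default (still producing a valid rational 2-tree) and $\tau$'s output is non-total, so Player II fails to declare proper victory and Player I wins anyway.

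The main obstacle is timing: the level-$\geq 2$ bits of $T^\ast$ must be committed before $\tau$ can read them and produce $F(T^\ast)(1)$, yet those bits are intended to depend on $F(T^\ast)(1)$ itself. This self-reference is exactly what Kleene's recursion theorem (in the relativized form used by the recursion trick) is designed to resolve, and relativization to an arbitrary oracle lifts the argument from the computable to the continuous setting. A secondary technical subtlety is that Player I's response policy is not a continuous function of $S$, reflecting the well-known fact that ${\sf AoUC}$ has no continuous selector; however only Player II's strategy $\tau$ need be continuous in the reduction game, so Player I may freely respond with any valid path based on Player II's total queries.
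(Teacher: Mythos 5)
Your high-level plan is the right one (invoke Fact~\ref{fact:KP-AoUC-closed}, pass to the two-round game, diagonalize via the recursion trick), and your observation that Player~I's response policy need only be classically well-defined, not continuous, is correct. However, there is a genuine gap in the construction of the opening move $T$, and it is not merely technical.

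The recursion trick (Section~\ref{sec:recursion-trick}) requires a \emph{$Z$-computable} algorithm $e\mapsto x_0(e)$; only the subsequent moves $x_1,x_2,\dots$ may be chosen non-effectively. You define $T$ all at once via the bit $F(T)(1)$, where $F$ is the output of $\tau$ against your ``canonical'' response policy. But $F(T)(1)$ is not $Z$-computable from $T$ and a code for $\tau$: to evaluate it one must decide, for each first-round aou-query $S^1_j$, whether $S^1_j$ ever commits (a $\Sigma^0_1$ event not decidable at any finite stage) and, if so, read off the committed path to determine the second-round queries. Kleene's recursion theorem resolves the self-reference of a computable construction; it does not give the construction oracle access to such non-computable data, and your remark that the ``timing obstacle'' is ``exactly what Kleene's recursion theorem is designed to resolve'' is where the argument breaks. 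Relatedly, in the case where $\tau$ never emits $F(T)(1)$, you say the algorithm ``commits level 2 to a default,'' but the algorithm cannot detect that this case has occurred.

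The paper avoids this by building $T$ level-by-level: at each stage the branching node $v_s$ is only extended when, \emph{at a finite stage}, $\alpha_{x,y}(|v_s|)$ is observed to be defined for \emph{every} $(x,y)$ that still looks correct (a $\Sigma^0_1$ condition, by compactness of the space of candidate $(x,y)$); when this happens Player~I diagonalizes, which forces Player~II to make some aou-tree commit. The delicate part — absent from your proposal — is bounding the number of such commitments so that $T$ ends up a \emph{rational} $2$-tree. First-round commitments are bounded by $n+1$ directly, but the second-round aou-trees $R^j_x$ depend on $x$, so the paper must pass to a clopen neighborhood $C$ of $x$ on which the number $\ell$ of second-round queries is constant; each further commitment refines $C$, and only then does one get a finite bound. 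Because you fix a single response path $\vec p^1$ rather than quantifying over all still-plausible $(x,y)$, and because your construction is non-incremental, none of this machinery is available, and the diagonalization cannot be carried out within a computable opening-move algorithm.
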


\begin{proof}
By Kihara-Pauly \cite{pauly-kihara2-mfcs}, it suffices to show that ${\sf WKL}_{\leq 2}\not\leqW({\sf WKL}_{\rm aou})^\ast\star({\sf WKL}_{\rm aou})^\ast$.
We now consider the following play:
\[
\begin{array}{rcccccc}
{\rm I}\colon	& T		&		& x=\langle x_0,\dots,x_n\rangle	&		& y=\langle y_0,\dots,y_{\ell(x)}\rangle	&	\\
{\rm II}\colon	&		& \langle S_0,\dots,S_n\rangle	&		& \langle R_x^0,\dots,R_x^{\ell(x)}\rangle	&		& \alpha_{x,y}
\end{array}
\]

Player II tries to construct two sequences $S=\langle S_0,\dots,S_n\rangle$ and $R(x)=\langle R_x^0,\dots,R_x^{\ell(x)}\rangle$ of aou-trees, and a path $\alpha_{x,y}$ through I's rational $2$-tree $T$, where the second move $R(x)$ depends on Player I's second move $x$, and the third move $\alpha_{x,y}$ depends on Player I's second and third moves $(x,y)$.
As in Proposition \ref{prop:sep-main5}, each partial continuous function $f$ is coded as an element $p$ of $(\om\cup\{\bot\})^\om$, so one can read a finite portion $p[s]=\langle p(0),p(1),\dots,p(s-1)\rangle$ of $p$ by stage $s$.

By {\em recursion trick} (Section \ref{sec:recursion-trick}), it is sufficient to describe an algorithm constructing a rational $2$-tree $T$ from given $S,R,\alpha$ such that Player I wins along the above play for some $x$ and $y$.
Now, the algorithm is given as follows:
\begin{enumerate}
\item At each stage $s$, $T$ has two nodes $\ell_s,r_s$ of length $s$, where $\ell_s$ is left to $r_s$.
Let $v_s=\ell_s\land r_s$ be the current branching node of $T$.
If we do not act at stage $s$, we always put $\ell_{s+1}=\ell_s1$ and $r_{s+1}=r_s0$. 
\item Wait for $\alpha_{x,y}(|v_s|)$ being defined for any correct $x$ and $y$; that is, any $(x,y)$ with $x_i\in [S_i]$ and $y_j\in [R^j_x]$.
By compactness, if it happens, it is witnessed by some finite stage.
\item Suppose that it happens at stage $s$.
If $\alpha_{x,y}(|v_s|)=0$ for some correct $x$ and $y$ then $\alpha_{x,y}$ is incomparable with $r_s$, so we put $\ell_{s+1}=r_s0$ and $r_{s+1}=r_s1$.
Otherwise, $\alpha_{x,y}$ for any correct $x$ and $y$ is incomparable with $\ell_s$, so we put $\ell_{s+1}=\ell_s0$ and $r_{s+1}=\ell_s1$.
In any case, this action ensures that $\alpha_{x,y}$ for some correct $x$ and $y$ is not a path through $T$.
\item In order for Player II to win, II needs to make $(x,y)$ incorrect; that is, II needs to remove either $x_i$ from $S_i$ for some $i$ or $y_j$ from $R^j_x$ for some $j$.
\item If Player II decided to remove $x_i$ from $S_i$, then this action forces $S_i$ to have at most one path as $S_i$ is aou-tree.
If it happens, go back to (2).
Note that we can arrive at (5) at most $n+1$ times, since we only have $n+1$ trees $S_0,\dots,S_n$.
\item If Player II decided to remove $y_j$ from $R^j_x$, then this action also forces $R^j_x$ to have at most one path as $R^j_x$ is aou-tree.
By continuity of $R$, there is a clopen neighborhood $C$ of $x$ such that $R_z^j$ has a single path for any $z\in C$.
We may assume that $\ell$ takes a constant value $c$ on $C$ by making such a neighborhood $C$ sufficiently small.
Then we now go to the following (2$^\prime$) with this $C$.
\item[(2$^\prime$)] Wait for $\alpha_{x,y}(v_s)$ being defined for any correct $x\in C$ and $y$; that is, any $(x,y)$ with $x_i\in [S_i]\cap C$ and $y_j\in [R^j_x]$.
By compactness, if it happens, it is witnessed by some finite stage.
\item[(3$^\prime$)] Suppose that it happens at stage $s$.
If $\alpha_{x,y}(|v_s|)=0$ for some correct $x\in C$ and $y$ then $\alpha_{x,y}$ is incomparable with $r_s$, so we put $\ell_{s+1}=r_s0$ and $r_{s+1}=r_s1$.
Otherwise, $\alpha_{x,y}$ for any correct $x$ and $y$ is incomparable with $\ell_s$, so we put $\ell_{s+1}=\ell_s0$ and $r_{s+1}=\ell_s1$.
In any case, this action ensures that $\alpha_{x,y}$ for some correct $x$ and $y$ is not a path through $T$.
\item As before, in order for Player II to win, II needs to make $(x,y)$ incorrect; that is, II needs to remove either $x_i$ from $S_i\cap C$ for some $i$ or $y_j$ from $R^j_x$ for some $j$.
In other words, Player II chooses either (5) or (6$^\prime$) below at the next step.
Here, after going to (5), the parameter $C$ is initialized.
\item[(6$^\prime$)]
The action of this step is the same as (6), but a clopen neighborhood $C'$ of $x$ is chosen as a subset of $C$.
The action of (6$^\prime$) forces some $R^j_z$ to have at most one path for any $z\in C'$.
Then go back to (2$^\prime$).
Note that, as the value $c$ of $\ell$ on $C$ is determined at the step (6), we only have $c+1$ many trees $T^0_z,\dots,T^c_z$ for any $z\in C$, so we can arrive at (6$^\prime$) at most $c$ many times unless going to (5).
\end{enumerate}

Consequently, the procedure arrives (5) or (6$^\prime$) at most finitely often.
This means that Player I eventually defeats Player II; that is, either $\alpha_{x,y}$ is not a path through $T$ for some $x,y$ or else Player II violates the rule by making something which is not an aou-tree.

To apply recursion trick, even if Player II violates the rule, the first move $T$ of Player I needs to obey the rule.
To ensure this, one may assume that we proceed our algorithm at stage $s$ only if Player II's partial trees look like aou-trees at all levels below $s$.
Then if Player II violates the rule then our algorithm performs only finitely many actions, so $T$ must be a rational $2$-tree.
\end{proof}

\subsubsection{The strength of their product}

We now consider the product ${\sf WKL}_{\leq 2}\times{\sf WKL}_{\rm aou}$ of choice for finite sets and all-or-unique choice.
To examine the strength of $({\sf WKL}_{\leq 2}\times{\sf WKL}_{\rm aou})^{\clo}$, we consider the following compactness argument:

In general, to show either $F\leq_{\sf W}G^{\clo}$ or $F\not\leq_{\sf W}G^{\clo}$, we need to consider a reduction game of arbitrary length.
However, if everything involved in this game is compact, then one can pre-determine a bound of the length of a given game.
To be more explicit, let $G(x)\subseteq 2^\om$ be compact uniformly in $x$; that is, there is a multifunction $\hat{G}$ whose graph $\{(x,y):x\in{\rm dom}(\hat{G})\mbox{ and }\hat{G}(x)=y\}$ is compact, and $G$ is a restriction of $\hat{G}$ to some domain. 
For instance, one can take ${\sf WKL}_{\leq 2}$, ${\sf WKL}_{\rm aou}$, ${\sf WKL}_{\rm conv}$, and ${\sf WKL}$.
These examples are actually effectively compact.
Then, consider the reduction game $\mathfrak{G}(F,G)$:
\[
\begin{array}{rccccccc}
{\rm I}\colon	& x_0	&		& x_1	&		& x_2	&	& \dots \\
{\rm II}\colon	&		& \langle j_0,y_0\rangle	&		& \langle j_1,y_1\rangle	& 		& \langle j_2,y_2\rangle	& \dots
\end{array}
\]

Assume that Player II has a winning strategy $\tau$, which yields continuous functions $\tilde{j}$ and $\tilde{y}$ such that $j_n=\tilde{j}(x_0,x_1,\dots,x_n)$ and $y_n=\tilde{y}(x_0,x_1,\dots,x_n)$.
Now, consider the set $P_\tau(x_0)$ of Player I's all plays $x=(x_1,\dots)$ (where the first move $x_0$ is fixed) against II's strategy $\tau$, where $x$ obeys the rule (and $x_n\in 2^\om$ is arbitrary if $j_m=1$ for some $m<n$; that is, Player II has already declared victory).
More precisely, consider
\[P_\tau(x_0)=\{x\in (2^\om)^\om:x_{n+1}\in g(\tilde{y}(x_0,\dots,x_n))\mbox{ or }(\exists m<n)\;\tilde{j}(x_0,\dots,x_m)=1\}.\]

By continuity of $\tilde{j}$ and $\tilde{y}$, and compactness of $g(y)$, the set $P_\tau(x_0)$ is compact.
Hence, again by continuity of $\tilde{j}$ and compactness of $P_\tau(x_0)$, one can easily verify that there is a bound $n$ of the length of any play; that is, for any $x=(x_i)\in P_\tau(x_0)$ there is $m$ such that $\tilde{j}(x_0,\dots,x_m)=1$.
Moreover, $x_0\mapsto n$ is continuous.
If the strategy $\tau$ is computable, and $g(y)$ is effectively compact uniformly in $y$, then $P_\tau(x_0)$ is also effectively compact, and moreover, $x_0\mapsto n$ is computable.
This concludes that 
\[F\leq_{\sf W}G^{\clo}\implies F\leq_{\sf W}\bigsqcup_nG^{(n)},\]
where $G^{(n)}$ is the $n$th iteration of the compositional product; that is, $G^{(1)}=G$ and $G^{(n+1)}=G\star G^{(n)}$.
Furthermore, if ${\dom}(F)$ is effectively compact, one can also ensure that $F\leq_{\sf W}\bigsqcup_nG^{(n)}$ implies $F\leq_{\sf W}G^{(n)}$ for some $n\in\om$.
By the above argument, for instance, one can ensure the following:

\begin{obs}\label{obs:compactness-game-bound}
$({\sf WKL}_{\leq 2}\times{\sf WKL}_{\rm aou})^{\clo}\equiv_{\sf W}\bigsqcup_n({\sf WKL}_{\leq 2}\times{\sf WKL}_{\rm aou})^{(n)}$.
\end{obs}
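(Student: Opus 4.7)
The plan is to apply the general compactness principle established in the paragraph immediately preceding the observation, instantiated at $g := {\sf WKL}_{\leq 2}\times{\sf WKL}_{\rm aou}$. First I would verify the hypothesis: for any binary tree $T$, both ${\sf WKL}_{\leq 2}(T)=[T]$ and ${\sf WKL}_{\rm aou}(T)=[T]$ are effectively compact $\Pi^0_1$ subsets of $2^\om$ uniformly in $T$, hence the product operator $g$ is uniformly effectively compact-valued in $2^\om\times 2^\om$ via the standard product representation.

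For the nontrivial direction $g^\Game\leqW\bigsqcup_n g^{(n)}$, I instantiate the compactness principle at $f=g^\Game$ itself, witnessed by the identity reduction. Given any computable Player II strategy $\tau$ with its continuous components $\tilde j,\tilde y$, the rule-abiding play set $P_\tau(x_0)$ is effectively compact, and continuity of $\tilde j$ then yields a computable bound $n(x_0)$ on the round at which Player II declares victory. One then replaces the game by the $n(x_0)$-fold compositional product of $g$, which lands in the $n(x_0)$-th component of $\bigsqcup_n g^{(n)}$ — precisely the conclusion of the general principle applied here.

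For the converse $\bigsqcup_n g^{(n)}\leqW g^\Game$, note that $g\leqW g^\Game$ and $g^\Game$ is closed under $\star$ by transitivity of $\leqGW$, so by induction $g^{(n)}\leqW g^\Game$ uniformly in $n$: a reducing strategy simply makes $n$ consecutive queries to $g$ and declares victory with the resulting composition. Given input $(n,x)$ for $\bigsqcup_n g^{(n)}$, one decodes $n$ and invokes this uniform reduction, yielding a single Weihrauch reduction to $g^\Game$.

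The only mild obstacle is checking that uniform effective compactness of the outputs is preserved by the binary product; but this is immediate because pairing a pair of $\Pi^0_1$ classes in $2^\om$ gives a $\Pi^0_1$ class in $2^\om\times 2^\om\cong 2^\om$, uniformly and effectively. All other ingredients — the length-bound via compactness, collapse of the game to a compositional product of fixed length, and closure of $g^\Game$ under $\star$ — are already established in the text and simply need to be combined.
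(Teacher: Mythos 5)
Your proof is correct and takes essentially the same approach as the paper: the forward direction $g^\Game\leqW\bigsqcup_n g^{(n)}$ is exactly the paper's general compactness argument instantiated at $g={\sf WKL}_{\leq 2}\times{\sf WKL}_{\rm aou}$ and $f=g^\Game$, using effective compactness of $g(y)$ to extract a computable round bound, while the converse is the routine closure of $g^\Game$ under $\star$. You flesh out the converse, which the paper leaves implicit; the only cosmetic quibble is that the round bound depends on the full input $(x_0,\tau)$ rather than just $x_0$, but this does not affect the argument.
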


Then we show that $({\sf WKL}_{\leq 2}\times{\sf WKL}_{\rm aou})^{\clo}$ is not strong enough for solving ${\sf WKL}_{\rm clop}$, weak K\"onig's lemma for clopen convex trees.

\begin{prop}\label{prop:sep-main6}
${\sf WKL}_{\rm clop}\not\leqcW({\sf WKL}_{\rm aou}\times{\sf WKL}_{\leq 2})^{\clo}$.
\end{prop}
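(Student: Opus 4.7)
By the effective compactness argument preceding Observation \ref{obs:compactness-game-bound} applied to $g:={\sf WKL}_{\rm aou}\times{\sf WKL}_{\leq 2}$, together with the effective compactness of $\dom({\sf WKL}_{\rm clop})$, it suffices to prove that for every $n\in\om$,
\[
{\sf WKL}_{\rm clop}\not\leqcW({\sf WKL}_{\rm aou}\times{\sf WKL}_{\leq 2})^{(n)}.
\]
Fix such an $n$ and consider the reduction game of $n+1$ rounds in which Player II makes $n$ queries $\langle A_i,B_i\rangle$, with $A_i$ an aou-tree and $B_i$ a $2$-tree, Player I responds with $\langle a_{i+1},b_{i+1}\rangle\in[A_i]\times[B_i]$, and Player II finally declares a candidate infinite path $\alpha$ through Player I's tree $T$.

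Following the template of Propositions \ref{prop:sep-main4} and \ref{prop:sep-main5}, the plan is to apply recursion trick: from any continuous strategy of Player II, I build a basic clopen tree $T=\{\tau:\tau\succeq\sigma\}$ that defeats it. The distinguished string $\sigma$ is constructed incrementally as a candidate $\sigma_s$, initially empty, with $T$ always kept consistent with $\{\tau:\tau\succeq\sigma_s\}$. At each stage I walk through Player II's strategy and wait until the declaration $\alpha$ has been determined on position $|\sigma_s|$ for every adversarial completion of the play so far; then I extend $\sigma_s$ by the bit opposite to that determined value, ensuring that along that branch $\alpha\notin[\sigma_{s+1}]$.

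The combinatorial crux is that only finitely many extensions of $\sigma_s$ ever occur. Each aou-tree slot $A_i$ is subject to the all-or-unique dichotomy, so it can trigger at most one commitment event during the play (after which Player I's response in that slot is forced), while each $2$-tree slot $B_i$ can resolve at most one binary branching. Summing over the $n$ rounds yields a uniformly linear bound on the number of extensions, so the algorithm terminates with a finite string $\sigma$, and the resulting basic clopen tree $T=\{\tau:\tau\succeq\sigma\}$ defeats Player II on some rule-abiding play.

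The main obstacle, inherited from Proposition \ref{prop:sep-main4}, is that recursion trick requires Player I's tree $T$ to remain a valid basic clopen tree even in the event that Player II subsequently violates the rules. I handle this by performing each extension of $\sigma_s$ only once the relevant components of II's queries have been \emph{witnessed} to behave as a genuine aou- or $2$-tree up to the height currently under consideration; if Player II ever breaks the rules, the algorithm simply freezes and $T$ remains the basic clopen tree determined by the final $\sigma_s$. The remaining verification --- coordinating Player I's adversarial responses $\langle a_{i+1},b_{i+1}\rangle$ across the $n$ rounds so as to simultaneously maintain a rule-abiding play for Player I and drive the diagonalization --- is a multi-round elaboration of the case analysis carried out in the proof of Proposition \ref{prop:sep-main4}.
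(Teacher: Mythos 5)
Your setup is on the right track: the compactness reduction to $({\sf WKL}_{\rm aou}\times{\sf WKL}_{\leq 2})^{(n)}$ via Observation \ref{obs:compactness-game-bound}, and the overall recursion-trick strategy of building a basic clopen tree $T=\{\tau:\tau\succeq\sigma\}$ that diagonalizes against Player II's $\alpha$. These match the paper. But the combinatorial crux of your argument --- ``each $2$-tree slot $B_i$ can resolve at most one binary branching'' and the resulting ``uniformly linear bound on the number of extensions'' --- is where the proof breaks down. A general $2$-tree (as opposed to a \emph{rational} $2$-tree) may re-branch infinitely often: at each level it has exactly two nodes, but the branching point can keep moving upward forever, in which case $[B_i]$ is a singleton approached only in the limit. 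If, at each stage, Player I commits to one of the two current paths through $B_i$ and later that path dies, Player I must re-choose, and this re-choosing can recur unboundedly. Your incremental $\sigma_s$ would then grow without bound and the resulting $T$ would be a single-path tree, which is not a basic clopen tree, so the recursion trick cannot be applied.

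The paper's fix is precisely where your proposal is silent: Player I considers a \emph{non-deterministic} play, taking \emph{both} current candidates for $x_B^i$ simultaneously (there are at most $2^n$ such non-deterministic plays, hence at most $2^{n+1}$ candidate outputs $\alpha_x$), while committing to a \emph{single} $x_A^i$ per aou-slot. The commitment $\sigma$ is then chosen of length $n+2$ to avoid all $2^{n+1}$ candidate outputs at once (a counting argument, not a bit-by-bit increment), so no re-branching of a $2$-tree ever invalidates it. Only the aou-slots can force a genuine re-commitment --- when an aou-tree collapses from ``all'' to ``unique'' --- and since an aou-tree can make that transition at most once, the number of re-commitments is bounded, yielding a finite $\sigma$. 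Without this non-determinism (or an equivalent device), the argument you sketch does not bound the number of extensions of $\sigma$.
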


\begin{proof}
By Observation \ref{obs:compactness-game-bound}, it suffices to show that ${\sf WKL}_{\rm clop}\not\leqW\bigsqcup_n({\sf WKL}_{\rm aou}\times{\sf WKL}_{\leq 2})^{(n)}$.
We now consider the following play:
\[
{\small
\begin{array}{rccccccc}
{\rm I}\colon	& T		&		& x(1)=\langle x^1_A,x^1_B\rangle		&	 & \dots & 
x(n)=\langle x^n_A,x^n_B\rangle &  \\
{\rm II}\colon	&		& n,\langle A,B\rangle	&		& \langle A_{x(1)},B_{x(1)}\rangle & \dots &
& \alpha_{x(1)\dots x(n)}
\end{array}
}
\]

Player II tries to construct a sequence of pairs of aou-trees $A_\bullet$ and $2$-trees $B_{\bullet}$, and a path $\alpha_{x(1)\dots x(n)}$ through a basic clopen tree $T$, where the $i$th move $\langle A_{x(1)\dots x(i-1)},B_{x(1)\dots x(i-1)}\rangle$ depends on Player I's moves $x(1),\dots,x(i-1)$.
As in Proposition \ref{prop:sep-main5}, each partial continuous function $f$ is coded as an element $p$ of $(\om\cup\{\bot\})^\om$, so one can read a finite portion $p[s]=\langle p(0),p(1),\dots,p(s-1)\rangle$ of $p$ by stage $s$.

By {\em recursion trick} (Section \ref{sec:recursion-trick}), it is sufficient to describe an algorithm constructing a basic clopen tree $T$ from given $n,\langle A_\bullet,B_\bullet\rangle,\alpha$ such that Player I wins along the above play for some $x(1),\dots,x(n)$.
Now, the algorithm is given as follows:

\begin{enumerate}
\item At each stage $s$, there is a binary string $\sigma$ such that $T$ contains all binary strings of length $s$ extending $\sigma$.
If we do not act at stage $s$, we always assume that $T$ contains all binary strings of length $s+1$ extending $\sigma$.
\item Wait for $n$ (in Player II's first move) being determined.
Then wait for $\alpha_{x(1)\dots x(n)}\upto n+2$ being defined for any correct $x(1),\dots,x(n)$, which means that $x^{i+1}_A\in A_{x(1)\dots x(i)}$ and $x^{i+1}_B\in B_{x(1)\dots x(i)}$.
By compactness, if it happens, it is witnessed by some finite stage.
\item For any $i$, there are two possibilities for $x_B^i$ for a given $B_{x(i-1)}$ since $B_\bullet$ is a $2$-tree.
So, we consider a non-deterministic play of Player I, which takes both possibilities for $x_B^i$, but for $x_A^i$ chooses just one element.
In other words, we consider (at most) $2^n$ many $x=x(1)\dots x(n)$'s, so there are at most $2^{n+1}$ many $\alpha_x$'s.
Clearly, there are $2^{n+2}$ nodes of length $n+2$, so choose a binary string $\sigma$ of length $n+2$ which is different from any $\alpha_x\upto n+2$.
Then, we declare that $T$ only contains binary strings of length $s+1$ extending $\sigma$ at stage $s+1$.
This ensures that $\alpha_x$ is not a path through $T$.
\item In order for Player II to win, II needs to remove $x^i_A$ from $A_{x(1)\dots x(i-1)}$ for some $i$.
This action forces $A_{x(1)\dots x(i-1)}$ to have at most one path as $A_\bullet$ is an aou-tree.
Player I re-chooses $x^i_A$ as a unique element in $A_{x(1)\dots x(i-1)}$, and go back to (2) with $k(n+2)$ instead of $n+2$, where $k$ is the number of times which we have reached the step (4) so far.
\end{enumerate}

When we arrive at (4), Player II need to declare that some of $A_\bullet$ has at most one path.
Thus, it is easy to ensure that the algorithm can arrive at (4) at most finitely often.
Hence, $x^i_A$ is eventually stabilized for any $i$, and by our action at (3), this ensures that $\alpha_x$ cannot be a path through $T$.
The string $\sigma$ is also eventually stabilized, and thus $T$ becomes a clopen tree.
Thus, Player I obeys the rule.

To apply recursion trick, even if Player II violates the rule, the first move $T$ of Player I needs to obey the rule.
To ensure this, one may assume that we proceed our algorithm at stage $s$ only if Player II's partial trees look like pairs of aou-trees and $2$-trees at all levels below $s$.
Then if Player II violates the rule then our algorithm performs only finitely many actions, so $T$ must be a basic clopen tree.
In any case, Player I obeys the rule, and so Player I defeats Player II.
\end{proof}


\subsubsection{The product of de Morgan's law and the double negation elimination}

We now examine the strength of $({\sf RT}^1_2\times{\limN})^{\clo}$, the game closure of the product of Ramsey's theorem for singletons and two-colors (the infinite pigeonhole principle) and the discrete limit operation.
We say that a partial finite-valued function $F\pcolon\om^\om\rightrightarrows\om^\om$ has a {\em semicontinuous bound} if there is a partial continuous function $g\pcolon\om^\om\times\om\to\om^\om$ such that $F(x)\subseteq\{g(x,n):n<b(x)\}$ for any $x\in{\dom}(F)$, where $b\pcolon\om^\om\to\om$ is lower semicontinuous.

\begin{lemma}\label{lem:RT-semi-cont-bound}
$({\sf RT}^1_2\times{\limN})^{\clo}$ has a semicontinuous bound.
\end{lemma}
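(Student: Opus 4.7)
The plan is to exploit that branching in the reduction game for $({\sf RT}^1_2\times{\limN})^\Game$ comes only from the ${\sf RT}^1_2$-component: at each round, ${\sf RT}^1_2$ admits at most two valid responses (the unbounded colors of the queried coloring), while ${\limN}$ is single-valued. For any $(x_0,\tau)$ in the domain of $({\sf RT}^1_2\times{\limN})^\Game$, the tree $T_{x_0,\tau}$ of valid plays therefore has branching at most $2$. Since $\tau$ is winning, every branch is finite, so K\"onig's lemma applied to this binary tree gives that $T_{x_0,\tau}$ is finite; in particular, $({\sf RT}^1_2\times{\limN})^\Game(x_0,\tau)$ is a finite set.

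Next, I would adapt the compactness argument preceding Observation~\ref{obs:compactness-game-bound} to extract a continuous depth bound $N(x_0,\tau)$. The value space $\{0,1\}\times\N$ of ${\sf RT}^1_2\times{\limN}$ is not uniformly compact, but the non-determinism lives entirely in the compact factor $\{0,1\}$. Encoding plays by their sequences of bit-responses, one passes to the compact space $\{0,1\}^\om$ and considers the closed subset of bit-sequences along which $\tau$ does not itself violate the rule---an observable property, since it only requires that Player II's queries remain in the domain of ${\sf RT}^1_2\times{\limN}$. On the domain of $(\cdot)^\Game$, the winning property of $\tau$ ensures that along each such admissible bit-play Player II eventually declares victory, so this closed set is covered by the open family $\{\mbox{II declares by step }n\}_{n\in\om}$, and compactness yields a uniform bound $N(x_0,\tau)$ continuous in $(x_0,\tau)$.

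I would then set $b(x_0,\tau)=2^{N(x_0,\tau)}$, which is continuous and hence lower semicontinuous, and define the continuous enumerator required by the lemma by iterating over the bit-sequences $\pi\in\{0,1\}^{\leq N(x_0,\tau)}$ and simulating $\tau$ along each $\pi$ (with ${\limN}$-responses computed deterministically from the queried sequences), outputting Player II's declared value when it occurs. Since every leaf of $T_{x_0,\tau}$ arises from some such bit-sequence, the outputs of $({\sf RT}^1_2\times{\limN})^\Game(x_0,\tau)$ are enumerated by a list of length at most $2^{N(x_0,\tau)}=b(x_0,\tau)$.

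The main obstacle is justifying the compactness step given the $\Pi^0_2$ nature of the condition ``bit $j$ is a valid response'' (i.e., the chosen color is unbounded). The plan is to avoid this $\Pi^0_2$ condition entirely in the construction of the admissible closed set, using only the observable ``non-violation by $\tau$'' condition (which is closed in $\{0,1\}^\om$), and to invoke $\tau$'s winning property only to upgrade admissibility to eventual declaration along these bit-sequences.
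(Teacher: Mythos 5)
The core observation you make---that the only genuine nondeterminism comes from the ${\sf RT}^1_2$-component, so the tree of valid plays has branching at most $2$ and is therefore finite once every branch is seen to terminate---is correct and is also the starting point of the paper's proof. But the effectivization step has a genuine gap, and it is the whole content of the lemma.

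The problem is that your compactness argument over $\{0,1\}^\om$ implicitly requires you to \emph{simulate} $\tau$ along a bit-sequence $\pi$, and simulation means: at each ${\limN}$-round, compute the (unique) correct response to the ${\limN}$-query and feed it back to $\tau$. Computing $\limN(\alpha)$ from a convergent $\alpha$ is \emph{not} a continuous operation, so the map ``bit-sequence $\pi$ $\mapsto$ round-$n$ query'' is discontinuous as soon as a ${\limN}$-query has been answered. Consequently, the set of bit-sequences along which $\tau$ does not violate the rule is \emph{not} closed: for each round it asserts that the current ${\limN}$-query converges, a $\Sigma^0_2$ event, so the whole condition is $\Pi^0_2$ (a $G_\delta$), not a closed set of the kind needed to apply compactness of $\{0,1\}^\om$. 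This also means the advertised continuous depth bound $N(x_0,\tau)$ cannot exist in general: where a ${\limN}$-query stabilizes at a later stage, the round at which II can declare victory jumps discontinuously. Your final paragraph acknowledges the $\Pi^0_2$ issue but the proposed workaround does not actually remove the need to compute ${\limN}$-responses to unfold the tree.

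The fix, which is what the paper carries out, is to drop the attempt at a continuous bound and construct a \emph{lower} semicontinuous one via stage-wise approximation. At each stage $s$ one uses the current guesses about the ${\limN}$-limits (``current true outcome'') and the current observed colors (``passable'' edges) to cut out a finite set $A_s$ of ``accessible'' nodes, checks where II has already declared victory, and accumulates those declared values into a growing computable finite set $B_s$. One then shows, using exactly the König's-lemma/finiteness argument you start from, that $B=\bigcup_s B_s$ is finite and contains all outputs of $({\sf RT}^1_2\times{\limN})^\Game(x,\tau)$. The bound $b(x,\tau)=|B|=\sup_s|B_s|$ is a supremum of continuous functions, hence lower semicontinuous---which is all the lemma needs---and the enumerator $g(x,\tau,n)$ is the $n$th element enumerated into $B$, which is continuous. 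Your proposal is salvageable along these lines, but as written the compactness step does not go through.
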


\begin{proof}
Let $(x,\tau)$ be an instance of $({\sf RT}^1_2\times{\limN})^{\clo}$.
Consider a play of a reduction game according to II's strategy $\tau$:
\[
{\small
\begin{array}{rcccccccc}
{\rm I}\colon	& x		&		& \sigma(0)=(j_0,k_0)	&		& \sigma(1)=(j_1,k_1)	 & &	\dots \\
{\rm II}\colon	&		& (c,\alpha)		&		& (c_{\sigma(0)},\alpha_{\sigma(0)})	&		 &  (c_{\sigma(0)\sigma(1)},\alpha_{\sigma(0)\sigma(1)})&  \dots
\end{array}
}
\]

Here, Player II's moves are automatically generated from the strategy $\tau$.
For each round, Player I needs to answer a query $c_\bullet$ to ${\sf RT}^1_2$ and a query $\alpha_\bullet$ to ${\limN}$ made by Player II.
As an answer to the ${\sf RT}^1_2$-query $c_\bullet$ is either $0$ or $1$, and to the ${\limN}$-query $\alpha_\bullet$ is a natural number, one may assume that Player I's moves are restricted to $2\times\om$.
We define the {\em tree $T$ of Player I's possible plays} as $T=(2\times\om)^{<\om}$.
Roughly speaking, what we want to do is to describe a computable approximation of a $G$-strategy tree introduced in Definition \ref{def:strategy-tree}, where $G={\sf RT}^1_2\times{\limN}$.

We view each node $\sigma\in T$ as a sequence such that $j(n)\in 2$ and $k(n)\in\om$ alternatively appear (i.e., $\sigma(2n)\in\{0,1\}$ and $\sigma(2n+1)\in\om$) rather than a sequence of pairs from $2\times\om$.
Then, a node of $T$ of even length is called a {\em ${\sf RT}^1_2$-node}, and a node of odd length is called a {\em ${\limN}$-node}.
The strategy $\tau$ automatically assigns a query $c_\sigma$ to each ${\sf RT}^1_2$-node $\sigma\in T_s$, where $c_\sigma\colon\N\to 2$ is a two-coloring.
Then, each ${\sf RT}^1_2$-node $\sigma$ has at most two immediate successors $\sigma0,\sigma 1$, and moving to the node $\sigma i$ indicates that Player I chose $i$ as an answer to $c_\sigma$.
Similarly, the strategy $\tau$ automatically assigns a query $\alpha_\sigma$ to each ${\limN}$-node $\sigma\in T_s$, where $\alpha_\sigma\in\om^\om$ is a (convergent) sequence of natural numbers.

For a ${\limN}$-node $\sigma$, as mentioned above, Player I's next move is restricted to $\om$ (whenever I obeys the rule), but by the uniqueness of the limit, there is an exactly one correct answer.
Since ${\limN}$ is the discrete limit operation, if Player II obeys the rule, then $\alpha_\sigma$ must stabilize, that is, $\alpha_\sigma(s)=\alpha_\sigma(t)$ for any sufficiently large $s,t$, and this value is the correct answer to $\alpha_\sigma$.
Thus, if $\alpha_\sigma(s)=k$, we say that $k$ is the {\em current true outcome} of the ${\limN}$-node $\sigma$ at stage $s$, and only the edge $[\sigma,\sigma k]$ is {\em passable} at this stage.

Similarly, for a ${\sf RT}^1_2$-node $\sigma$, we declare that the edge $[\sigma,\sigma i]$ is {\em passable} if $i$ looks like a correct answer (which means that $c_\sigma(n)=i$ for infinitely many $n$).
More precisely:
First, we say that $\sigma$ is {\em accessible at stage $s$} if $|\sigma|<s$ and all edges in the path $[\langle\rangle,\sigma]$ are passable at stage $s$ (where $\langle\rangle$ is the root of $T$).
Each node $\sigma$ has its own clock $t_\sigma$; that is, let $t_\sigma(s)$ be the number of stages $t\leq s$ such that $[\langle\rangle,\sigma]$ is accessible at stage $t$.
In short, this clock $t_\sigma$ operates only when $\sigma$ is accessible.
Then, the edge $[\sigma,\sigma i]$ is {\em passable at stage $s$} if $c_\sigma(t_\sigma(s))=i$.
Clearly, at each stage $s$, there is a unique accessible node of length $s$.

If Player I obeys the rule, then, as $(x,\tau)$ is an instance of $({\sf RT}^1_2\times{\limN})^{\clo}$, Player II (according to the strategy $\tau$) declares victory at some round.
This means that if we restrict the tree $T$ to the nodes $\sigma$ at which Player I obeys the rule (i.e., Player I gives a correct answer to each query made by Player II), then the restriction forms a well-founded binary tree; that is, a finite binary tree.
This is exactly the unique $G$-strategy tree $R$ given by Player II's fixed strategy $\tau$ (see Definition \ref{def:legal-tree}).
Then, any $\sigma\not\in R$ is never accessible after some stage.
Indeed, if $\sigma\in R$ and $\sigma i\not\in R$, then the edge $[\sigma,\sigma i]$ is never passable after some stage, and there are only finite such nodes as $R$ is finite.

Let $\theta_s$ be the unique accessible node of length $s$ at stage $s$.
The above argument shows that after some stage $s_0$, either $\theta_t\subseteq R$ holds or $\theta_t$ extends a leaf $\theta_t\upto\ell$ of $R$ for any $t\geq s_0$.
The latter means that, after Player I's moves $\theta_t\upto\ell$, Player II declares victory.
For $s\geq s_0$, let $\tilde{\theta}_s$ be the longest initial segment of $\theta_s$ which is contained in $R$.
Then, $\{\tilde{\theta}_s:s\geq s_0\}\subseteq R$ is finite.
At stage $s$, for each $\sigma=\theta_s\upto t$ with $t\leq s$, check if Player II (according to the strategy $\tau$) declares victory with some value $u_\sigma$ at the next round.
If so, enumerate $\tilde{u}_s:=u_\sigma$ into an auxiliary set $B$.
Note that if $s\geq s_0$ then $\sigma=\tilde{\theta_s}\in R$, so $u_\sigma\in({\sf RT}^1_2\times{\limN})^{\clo}(x,\tau)$.
Hence, $\{\tilde{u}_s:s\geq s_0\}$ is finite since $\tilde{u}_s$ only depends on $\tilde{\theta}_s$.
Thus, $B$ is finite since clearly $\{\tilde{u}_s:s<s_0\}$ is finite, too.
In this way, we get an $(x,\tau)$-computable enumeration of a finite set $B$.
By our construction, we also note that if $\sigma\in R$ then there are infinitely many $t$ such that $\sigma\preceq\theta_t$.
Thus, $B$ contains all possible outputs $u_\sigma$ of $({\sf RT}^1_2\times{\limN})^{\clo}(x,\tau)$.
Consequently, (an enumeration of) $B$ gives a semicontinuous bound for ${\sf RT}^1_2\times{\limN}$.
\end{proof}

We apply the above lemma to show that $({\sf RT}^1_2\times{\limN})^{\clo}$ is not strong enough for solving weak K\"onig's lemma for $2$-trees:

\begin{prop}\label{prop:sep-main23}
If $F\pcolon\om^\om\rightrightarrows\om$ has a semi-continuous bound, then ${\sf WKL}_{\leq 2}\not\leqcW F$.
In particular, ${\sf WKL}_{\leq 2}\not\leqcW({\sf RT}^1_2\times{\limN})^{\clo}$.
\end{prop}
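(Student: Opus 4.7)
The approach is the recursion trick (Section~\ref{sec:recursion-trick}). Suppose for contradiction that ${\sf WKL}_{\leq 2}\leqcW f$ via continuous inner/outer reductions $H,K$. For any $2$-tree $T$, the semi-continuous bound forces every $y\in f(H(T))$ to lie in the finite set $\{g(H(T),n):n<b(H(T))\}$, so Player II's possible responses $K(T,y)$ are among the finitely many candidates $c_n:=K(T,g(H(T),n))$ for $n<b(H(T))$. The task reduces to producing, self-referentially, a $2$-tree $T$ such that no candidate $c_n$ lies in $[T]$.

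Since $|[T]|$ can be at most $2$ while $b(H(T))$ may be much larger, I build $T$ so that $[T]=\{p\}$ for a single infinite path $p$, using the ``dead-end'' construction: at each stage $s$ the two level-$(s{+}1)$ strings of $T$ are $p\upto(s{+}1)$ and $(p\upto s)\fr(1-p(s))$, so the non-$p$ sibling produced at the previous stage has no extension and is permanently dead. It then suffices to choose $p$ with $p\neq c_n$ for each $n<b(H(T))$. The bits of $p$ are decided by a dovetailed priority-style diagonalization: at each stage $s$ the algorithm sees a lower-semicontinuous estimate $b_s$ of $b(H(T))$ and stage-$s$ approximations of each $c_n$ for $n<b_s$; whenever the requirement ``$p\neq c_n$'' can be acted upon (that is, $c_n(k)$ has been determined from the committed portion of $T$ for some $k$ beyond the current commitment of $p$), commit $p(k):=1-c_n(k)$ and mark $n$ as satisfied. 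Dovetailing across $n$ ensures no requirement starves. The dead-end construction automatically makes $T$ a genuine $2$-tree, whatever Player II does, which is essential for applying the recursion trick.

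By Kleene's recursion theorem we may take $T$ to be an actual fixed point of this self-referential algorithm. In the limit $b(H(T))=N$ is finite, so only priorities $n<N$ are relevant. For each such $n$: if $g(H(T),n)\in f(H(T))$ then $c_n$ is a total path and, by continuity of $K$ together with the stabilization of the construction, the requirement is permanently satisfied; if $g(H(T),n)\notin f(H(T))$ then $c_n$ is partial or undefined and trivially differs from the total path $p$. Either way no valid $c_n$ lies in $[T]=\{p\}$, contradicting the assumed reduction. The main obstacle is the usual subtlety in recursion-tricked priority arguments: a commitment $p(k):=1-c_n(k)$ at stage $s$ uses a stage-$s$ approximation of $c_n(k)$, and $c_n$ is itself a function of $T$ and so may shift as further levels of $T$ are revealed; one must verify, via continuity of $K$ and the fact that committing $p(k)$ only modifies $T$ on levels $\geq k{+}1$, that once a requirement is satisfied it remains so. The ``in particular'' clause follows by combining the main statement with Lemma~\ref{lem:RT-semi-cont-bound}.
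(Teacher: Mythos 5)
Your reduction to finitely many candidates $c_n$ via the semicontinuous bound, and the use of the recursion trick, are both exactly right, but the specific ``dead-end'' $2$-tree construction has a gap that the fork-shaped construction in the paper is designed to avoid.

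In your construction, level $s+1$ of $T$ consists of the two children of $p\upto s$, so committing $T$ up to level $s+2$ requires committing $p\upto(s+1)$: the path $p$ and the tree $T$ advance in lockstep (up to a constant lag). Consequently, at any stage the committed portion of $T$ is only a constant number of levels beyond the committed portion of $p$. Now consider your actionability condition ``$c_n(k)$ has been determined from the committed portion of $T$ for some $k$ beyond the current commitment of $p$''. The opponent controls the continuous maps $H,K,\tilde g$, hence the continuity modulus of $c_n$; nothing stops that modulus from growing, e.g.\ $c_n(k)$ being determined only after levels $\leq 2k$ of $T$ are committed. Under the lockstep constraint, at the moment $p\upto m$ and $T\upto(m{+}O(1))$ are committed, the determined bits of $c_n$ are those with $k\lesssim m/2$, all of which lie \emph{below} the current commitment of $p$. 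Requirement $n$ is therefore never actionable, the algorithm never diagonalizes against it, and in the fixed point $T$ the candidate $c_n$ can be the unique path $p$. Dovetailing across $n$ does not help: it handles fairness among requirements, not the possibility that a single requirement is starved by a slow continuity modulus.

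The paper's algorithm avoids this by keeping \emph{both} level-$s$ nodes $\ell_s,r_s$ alive and maintaining a branching node $v_s$ whose \emph{level stays fixed} while the active requirement waits. The query is always about the fixed position $|v_s|$: since $|v_s|$ does not move and $T$ continues to be committed, continuity guarantees that $g(n)(|v_s|)$ is eventually determined (or $g(n)$ is partial and hence invalid), at which point the branching is moved past $|v_s|$ on the side disagreeing with $g(n)$. The crucial feature of the fork-shaped $2$-tree is that the algorithm can refrain from committing to a path above $|v_s|$ while still committing the tree level by level; this decoupling of ``commit $T$'' from ``commit $p$'' is precisely what makes the waiting argument succeed, and it is missing in the immediate-dead-end comb. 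Your proof can be repaired by replacing the comb with such a movable fork, i.e.\ by delaying the decision of which prong is the live path until the diagonalization can actually be executed. The appeal to Lemma~\ref{lem:RT-semi-cont-bound} for the ``in particular'' clause is fine as is.
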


\begin{proof}
If ${\sf WKL}_{\leq 2}\leqcW F$, where $F$ has a semi-continuous bound $(\tilde{g},\tilde{b})$, then there are continuous functions $H,K$ such that if $\tilde{T}$ is a $2$-tree then,  there is $n<b:=\tilde{b}(H(\tilde{T}))$ such that $\tilde{g}(H(\tilde{T}),n)$ is defined, and $g(n):=K(\tilde{T},\tilde{g}(H(\tilde{T}),n))$ is a path through $\tilde{T}$.
If we describe an algorithm constructing $T$ from $(g,b)$ (with a parameter $\tilde{T}$), by Kleene's recursion theorem, $\tilde{T}$ can be interpreted as self-reference; that is, we may assume that $T=\tilde{T}$.
This is the simplest version of {\em recursion trick} (Section \ref{sec:recursion-trick}).

Thus, it is sufficient to describe an algorithm constructing a $2$-tree $T$ from given $(g,b)$:
\begin{enumerate}
\item First we declare that each $n\in\N$ is active. 
At each stage $s$, $T$ has two nodes $\ell_s,r_s$ of length $s$, where $\ell_s$ is left to $r_s$.
Let $v_s=\ell_s\land r_s$ be the current branching node of $T$.
If we do not act at stage $s$, we always put $\ell_{s+1}=\ell_s1$ and $r_{s+1}=r_s0$. 
\item Then, ask if $g(n)(|v_s|)$ is already defined by stage $s$ for some active $n<b_s$, where $b_s$ is the stage $s$ approximation of $b$ (where recall that $\tilde{b}$ is lower semicontinuous).
\item If such $n$ exists, then the least such $n$ receives attention.
For this $n$, if $g(n)(|v_s|)=0$, then $g(n)$ is incomparable with the rightmost node $r_s$, so we put $\ell_{s+1}=r_s0$ and $r_{s+1}=r_s1$.
Similarly, if $g(n)(|v_s|)=1$, then $g(n)$ is incomparable with the leftmost node $\ell_s$, so we put $\ell_{s+1}=\ell_s0$ and $r_{s+1}=\ell_s1$.
This procedure ensures that $g(n)$ is incomparable with $\ell_{s+1}$ and $r_{s+1}$.
Then, we declare that $n$ is not active anymore.
\end{enumerate}

Assume that Player II obeys the rule; then $H(\tilde{T})\in{\rm dom}(F)$, so $b=\tilde{b}(H(\tilde{T}))$ is defined, i.e., $(b_s)$ converges to a finite value $b$.
Then, since there are only finitely many $n$ such that $g(n)$ is total and $n<b$, such $n$ receives attention at some stage $s$.
By our strategy described above, for any such $n$, if $p$ is a path through $T$ then we must have $g(n)\not=p$.
This contradicts our assumption that $g(n)$ is a path through $T$ for some $n<b$.

To apply recursion trick, even if Player II violates the rule (i.e., $(b_s)$ does not converge), the first move $T$ of Player I needs to obey the rule.
In this case, our algorithm may arrive at (3) infinitely often, but this just implies that $T$ converges to a tree which has a unique path.
In any case, $T$ is a $2$-tree.
Hence, we have ${\sf WKL}_{\leq 2}\not\leq_{\sf W}^cF$.
\end{proof}


\subsubsection{More products}

We finally examine the strength of the game closure of ${\sf RT}^1_2\times{\limN}\times{\sf WKL}_{\leq 2}$.
We show that it is not strong enough for solving weak K\"onig's lemma ${\sf WKL}_{\rm conv}$ for convex trees.

\begin{prop}\label{prop:sep-main78}
${\sf WKL}_{\rm conv}\not\leqcW({\sf RT}^1_2\times{\limN}\times{\sf WKL}_{\leq 2})^{\clo}$.
\end{prop}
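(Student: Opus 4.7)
The plan is to follow the pattern of Proposition~\ref{prop:sep-main23}: first establish that $({\sf RT}^1_2\times{\limN}\times{\sf WKL}_{\leq 2})^\Game$ has a semi-continuous bound (extending Lemma~\ref{lem:RT-semi-cont-bound}), then use recursion trick to construct a convex tree defeating any reduction into a multifunction with semi-continuous bound.

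For the extension of the semi-continuous bound, I would repeat the tree-of-plays argument in the proof of Lemma~\ref{lem:RT-semi-cont-bound}, now with a third node type corresponding to ${\sf WKL}_{\leq 2}$. At such a node, the query is a $2$-tree and the answer is an infinite path; since a $2$-tree has at most two infinite paths, Player~I's strategic branching at such a node is at most binary, just like at an ${\sf RT}^1_2$-node. Combined with the uniqueness of the correct outcome at ${\limN}$-nodes, the subtree $R$ of correct Player~I plays remains a finite binary tree whenever Player~II's strategy $\tau$ is winning, so accessible nodes stabilize inside $R$ and the values at which Player~II declares victory can be enumerated as in Lemma~\ref{lem:RT-semi-cont-bound}.

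For the diagonalization, suppose ${\sf WKL}_{\rm conv}\leqcW f$ via continuous $H,K$, where $f$ has semi-continuous bound $(\tilde g,\tilde b)$. Set $g(\tilde T,n):=K(\tilde T,\tilde g(H(\tilde T),n))$ and $b(\tilde T):=\tilde b(H(\tilde T))$. By recursion trick, it suffices to construct a convex tree $\tilde T$ from $(g,b)$ such that no $g(\tilde T,n)$ for $n<b(\tilde T)$ is a path through $\tilde T$. Present $\tilde T$ via a shrinking sequence $I_0\supseteq I_1\supseteq\dots\subseteq[0,1]$ of closed dyadic-rational intervals, with $\tilde T$ the convex tree whose path set corresponds to $\bigcap_kI_k$ under binary expansion. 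At each stage $s$, when a new $n<b_s$ has $g(\tilde T,n)\upto s$ defined and lying inside the current interval, wait until the precision $2^{-s}$ is much smaller than $|I_k|$ (say, $|I_k|\geq 4\cdot 2^{-s}$) before shrinking $I_k$ to a subinterval of length at least $|I_k|/3$ on one side of the dyadic neighborhood of $g(\tilde T,n)\upto s$. When Player~II plays the reduction game correctly, $b(\tilde T)$ is finite, so only finitely many $n$ ever activate a shrinking, $\bigcap_kI_k$ remains nondegenerate, and $\tilde T$ has an infinite path, yet no $g(\tilde T,n)$ is one---contradicting the reduction.

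The main obstacle is this last diagonalization: unlike in Proposition~\ref{prop:sep-main23}, where a $2$-tree may freely abandon either side at each level, convexity forbids excising a middle element of a level. Every diagonalization must therefore push one endpoint of the current interval past the offending dyadic neighborhood, which contracts $|I_k|$; the waiting protocol above tunes the rate of contraction so that, after finitely many pushes, the nested intersection remains nonempty. If Player~II violates the rule, the same construction still yields a valid convex tree (possibly with a single path in the limit), so Player~I's first move obeys the rule, as recursion trick demands.
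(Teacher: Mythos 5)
Your plan hinges on Step 1, the claim that $({\sf RT}^1_2\times{\limN}\times{\sf WKL}_{\leq 2})^\Game$ has a semicontinuous bound, and this cannot be right. The paper's own Proposition~\ref{prop:sep-main23} already shows that \emph{no} multifunction with a semicontinuous bound is above ${\sf WKL}_{\leq 2}$, even continuously; but ${\sf WKL}_{\leq 2}\leq_{\sf W}{\sf RT}^1_2\times{\limN}\times{\sf WKL}_{\leq 2}\leq_{\sf W}({\sf RT}^1_2\times{\limN}\times{\sf WKL}_{\leq 2})^\Game$. If the latter had a semicontinuous bound, Proposition~\ref{prop:sep-main23} would give ${\sf WKL}_{\leq 2}\not\leqcW({\sf RT}^1_2\times{\limN}\times{\sf WKL}_{\leq 2})^\Game$, an immediate contradiction. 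So the lemma you want simply does not extend.

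The intuitive step where the argument breaks is the sentence ``Player~I's strategic branching at such a node is at most binary, just like at an ${\sf RT}^1_2$-node.'' The subtree $R$ of correct plays is indeed at most binary at ${\sf WKL}_{\leq 2}$-nodes, but that is not what the semicontinuous bound needs: it needs the enumeration $n\mapsto g(x,n)$ to be \emph{continuous}, and $u_\sigma$ is Player~II's response to Player~I's actual move, which at a ${\sf WKL}_{\leq 2}$-node is an element of $2^\om$. An ${\sf RT}^1_2$-move is a bit and a ${\limN}$-move is a natural number, both coded in the discrete tree $(2\times\om)^{<\om}$ of plays; a ${\sf WKL}_{\leq 2}$-move is a path through a $2$-tree, which is not given by a finite label. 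You would have to parametrize by the branching height of the $2$-tree (an element of $\om\cup\{\infty\}$) and by left/right, and for a $2$-tree with a unique path the ``current'' branching height does not stabilize in the ${\limN}$ sense: it oscillates between $\infty$ and ever-larger finite values. Hence the analogue of ``accessible nodes stabilize inside $R$'' fails, and the auxiliary set $B$ is no longer guaranteed finite. The paper avoids this problem entirely by not proving a semicontinuous bound: its proof of Proposition~\ref{prop:sep-main78} runs the reduction game directly, representing Player~I's ${\sf WKL}_{\leq 2}$-moves non-deterministically by the two level-$t$ strings $\ell_t,r_t$ and organizing these into ``multi-paths''; the crucial step~(4) observes that an incorrect ${\sf WKL}_{\leq 2}$-outcome is automatically corrected by the multi-path structure, so only ${\sf RT}^1_2$- and ${\limN}$-outcomes need to stabilize.

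Your Step~2 (the interval-shrinking diagonalization producing a convex tree, with the waiting protocol tuned so that at most finitely many $1/3$-contractions occur) is a sensible construction and is in the same spirit as the basic clopen tree the paper builds; but as written it has nothing to attach to, since the $(g,b)$ it diagonalizes against do not exist. To repair the proof you would need to replace the semicontinuous-bound shortcut with the multi-path bookkeeping, at which point you are essentially reproving the paper's argument.
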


\begin{proof}
We now consider the following play:
\[
{\small
\begin{array}{rccccccc}
{\rm I}\colon	& T		&		& \sigma(0)=(j_0,k_0,x_0)	&		& \sigma(1)=(j_1,k_1,x_1)	 & \dots \\
{\rm II}\colon	&		& (c,\alpha,B)		&		& (c_{\sigma(0)},\alpha_{\sigma(0)},B_{\sigma(0)})	&		 & \dots
\end{array}
}
\]

Again, by {\em recursion trick} (Section \ref{sec:recursion-trick}), it is sufficient to describe an algorithm constructing a basic convex tree $T$ from Player II's given strategy $c_\bullet,\alpha_\bullet,B_\bullet,\dots$ such that Player I wins along the above play for some $j_0,k_0,x_0,j_1,k_1,x_1,\dots$
The difficulty here is again lack of compactness.

Recall the tree of I's possible plays from Proposition \ref{lem:RT-semi-cont-bound} (an approximation of a strategy tree in Definition \ref{def:strategy-tree}).
In the present game, each node of the tree essentially codes Player I's play $\langle j_0,k_0,x_0,j_1,k_1,x_1,\dots\rangle$; however, now our new tree $\mathcal{O}$ is slightly different from the previous one.
The first entry is $j=j_0$, which is an answer to the query $c$, and the second entry is $k=k_0$, which codes an answer to the query $\alpha$ (assigned to the node $\langle j\rangle$).

However, although the $2$-tree $B$ (assigned to the node $\langle jk\rangle$) has at most two paths, we have a lot of candidates for an answer $x_0\in 2^\om$.
We assign infinitely many possible outcomes $(\infty,0,1,\dots)$ to the node $\langle jk\rangle$.
The outcome $\infty$ indicates that $B$ has at most one path, and the outcome $n$ indicates that $B$ has exactly two paths which branch at height $n$.
In the latter case, Player I has two possible moves: choosing the left path $x$ or choosing the right path $x'$.
Therefore, instead of outcome $n$, we actually handle outcomes $n{\tt L}$ and $n{\tt R}$.
The outcome $n{\tt L}$ ($n{\tt R}$, respectively) suggests that Player I chooses the left path $x$ (the right path $x'$, respectively) at the branch of height $n$.
Then, the next entry of Player I's move may be $j_1$ and $j'_1$ of answers to two different queries $c_{jkx}$ and $c_{jkx'}$.
The tree $\mathcal{O}$ of outcomes is constructed by continuing this procedure.

We call a node of length $3n$ an {\em ${\sf RT}^1_2$-node}, a node of length $3n+1$ an {\em ${\limN}$-node}, and a node of length $3n+2$ a {\em ${\sf WKL}_{\leq 2}$-node}.
Note that an ${\sf RT}^1_2$-node has at most finitely many immediate successors, while a ${\limN}$-node and a ${\sf WKL}_{\leq 2}$-node has infinitely many immediate successors.
For each query on a ${\sf RT}^1_2$-node, the outcomes are defined as in the proof of Proposition \ref{lem:RT-semi-cont-bound}.
For each query $\alpha$ on a ${\limN}$-node $\gamma$, we assign infinitely many possible finite outcomes $n\in\N$ to this query, where the outcome $n$ indicates that $\alpha$ stabilizes after $n$; that is, $n$ is the least number such that $\alpha(k)=\alpha(n)$ for any $k\geq n$ (which is slightly different from Proposition \ref{lem:RT-semi-cont-bound}).
To be precise, the outcome $n$ is currently true if $\alpha$ stabilizes between stages $n$ and $t$; that is, $n$ is the least number such that $\alpha(k)=\alpha(n)$ whenever $n\leq k\leq s$.
In this case, $[\gamma,\gamma n]$ is passable at stage $s$.
Then Player I's move corresponds to $\gamma n$ is $\alpha(n)$.
Then, Player I's corresponding move to the outcome $n$ is $\alpha(n)$.

The outcomes of each query $B$ on a ${\sf WKL}_{\leq 2}$-node $\gamma$ are the same as described as above:
The outcome $\infty$ indicates that $B$ has at most one path, and the outcome $n$ indicates that $B$ has exactly two paths which branch at height $n$.
As in Proposition \ref{lem:RT-semi-cont-bound}, each node $\gamma\in\mathcal{O}$ has its own clock $t_\gamma$.
Each ${\sf WKL}_{\leq 2}$-query $B$ (which is assigned to a ${\sf WKL}_{\leq 2}$-node $\gamma$) has a unique {\em current true outcome} at stage $t=t_\gamma(s)$ in the following sense:
The $2$-tree $B$ has exactly two strings $\ell_t$ and $r_t$ of length $t$.
If $\ell_t$ extends $\ell_{t-1}$ and $r_t$ extends $r_{t-1}$, then the finite outcome $n$ is currently true, where $n$ is the height where $\ell_{t}$ and $r_t$ branch; otherwise $\infty$ is currently true.
If $n$ is currently true, then $[\gamma,\gamma n{\tt L}]$ and $[\gamma,\gamma n{\tt R}]$ are {\em passable} at $s$.
If $\infty$ is currently true, then $[\gamma,\gamma\infty]$ is passable at $s$.
The term ``accessible'' is defined in the same way as in Proposition \ref{lem:RT-semi-cont-bound}.

Let us describe an algorithm $\Psi$ that gives Player I's next move $\Psi(\gamma a)$ based on $\gamma a$, an immediate successor of $\gamma$.
The description of this algorithm itself depends only on the node $\gamma a$ and does not depend on the stage $s$ (of course, the algorithm itself proceeds according to the stage progression).
First, we look at a query $B$ assigned to $\gamma$.
If $B$ is not a $2$-tree (i.e., if the number of nodes at some height is not $2$), then either $\gamma$ is not in a true position or Player II has violated the rules, so Player I does not need to decide the next move, and the algorithm can be abandoned.
Therefore, we proceed with the algorithm assuming that $B$ is a $2$-tree.

Player I's move $\Psi(\gamma n{\tt L})$ assigned to $\gamma n{\tt L}$ is given by the following algorithm.
Wait until a branching point $v$ of height $n$ of $B$ is found, and then search for the unique path of $B$ that extends the left side $v0$ of that branch.
If $v$ is the last branching point in $B$, then the path extending $v_0$ in $B$ is unique, so the path is successfully computed.
If a new branch point in $B$ that properly extends $v$ is found, the algorithm terminates the computation.
In this case, $n{\tt L}$ is no longer a true outcome, so this algorithm is unnecessary.
An algorithm that assigns Player I's move $\Psi(\gamma n{\tt R})$ to $\gamma n{\tt R}$ also searches for the unique path in $B$ that extends the right side $v_1$ of the branching point $v$ of height $n$ using the same method as above.

Player I's move $\Psi(\gamma \infty)$ assigned to $\gamma\infty$ is given by the following algorithm.
At each stage, output the longest branching point $v \in B$.
Since $B$ is a $2$-tree, i.e., it has exactly two nodes at each height, as long as the longest branching point is not updated, the two nodes extend $v0$ and $v1$, respectively.
If the unique infinite path thought $B$ extends $v i$, then there is no path that extends $v(1-i)$, so $B$ will no longer have a node that extends $v(1-i)$ at some height.
However, since $B$ is a $2$-tree, it must have two nodes that extend $vi$ at that height.
Thus, $B$ must have a branching point longer than $v$.
Repeating this process, we see that $B$ has branching points at infinitely many heights.
Note that if the infinite path in $B$ is unique, it must pass through all branching points.
Therefore, this algorithm generates an infinite path through $B$.
Furthermore, since the branching points are updated infinitely many times, if $\gamma$ is accessible at infinitely many stages, then $[\gamma,\gamma\infty]$ is passable, so $\gamma\infty$ is accessible, at infinitely many stages.

Let $\mathcal{A}_s$ be the subtree of $\mathcal{O}$ consisting of all accessible nodes at each stage $s$.
Each ${\sf RT}^1_2$-node has a unique passable edge at each stage.
The same applies to $\lim_\om$-nodes.
For a ${\sf WKL}_{\leq 2}$-node $\gamma$, either $[\gamma,\gamma\infty]$ is passable, or both $[\gamma,\gamma n{\tt L}]$ and $[\gamma,\gamma n{\tt R}]$ are passable.
Therefore, $\mathcal{A}_s$ is a binary tree.


Now, the tree $\mathcal{A}_s$ of outcomes is converted to a strategy tree through the above translation algorithm $\Psi$.
That is, each node of $\mathcal{A}_s$ generates Player I's play by applying $\Psi$ to each edge.
If $\gamma \in \mathcal{A}_s$ is a true node (i.e., it continues to select true outcomes), then Player I follows the rules.
Now, let $\gamma$ be a true path through $\mathcal{A}_s$.
As Player I obeys the rule along $\gamma$, if Player II wins, then Player II declares victory along $\gamma$.
This means that the tree $\mathcal{A}$ of true nodes is well-founded; hence, finite as $\mathcal{A}_s$ is binary.

Inductively, we can guarantee that $\mathcal{A}_s\subseteq\mathcal{A}$ for infinitely many stages $s$.
That is, at such stages, the current true outcome at stage $s$ for each $\gamma\in\mathcal{A}_s$ is actually the true outcome.
If $\gamma$ is a ${\sf RT}^1_2$-node or a $\lim_\om$-node, the current true outcome at a sufficiently large stage is always the true outcome.
If $\gamma$ is a ${\sf WKL}_{\leq 2}$-node, let $B$ be a query assigned to $\gamma$.
If the height of the last branch of $B$ is $n$, then the current true outcome at a sufficiently large stage is $n$, which is the true outcome.
If $B$ has only one path, then, as seen above, $B$ branches at infinitely many heights, so the true outcome $\infty$ becomes a current true outcome at infinitely many stages.
In nodes in $\mathcal{O}$ that extend $\gamma\infty$, since each clock is restricted to $t_{\gamma\infty}$, the same argument guarantees that the subsequent successors also have current true outcomes which are actually true at infinitely many stages.
Thus, we obtain that $\mathcal{A}_s \subseteq \mathcal{A}$ at infinitely many stages $s$.

Now, the algorithm constructing a convex tree $T$ is given as follows:
\begin{enumerate}
\item At each stage $s$, there is a binary string $\sigma$ such that $T$ contains all binary strings of length $s$ extending $\sigma$.
If we do not act at stage $s$, we always assume that $T$ contains all binary strings of length $s+1$ extending $\sigma$.
\item Wait for some stage when Player II declares victory with $(u_i)_{i\in I}$ along all plays $(\gamma_i)_{i\in I}$ on paths through $\mathcal{A}_s$, and $u_i(|\sigma|+|I|)$ is defined for any $i\in I$.
\item Then choose $\tau$ extending $\sigma$ such that $\tau(|\sigma|+|I|)\not=u_i(|\sigma|+|I|)$ for any $i\in I$.
We declare that, at stage $s+1$, all binary strings in $T$ of length $s+1$ extend $\tau$.
This action ensures that $u_i$ is not a path through $T$ for any $i\in I$.
\end{enumerate}

If Player II's strategy is winning, then by the above observation that $\mathcal{A}_s \subseteq \mathcal{A}$ occurs infinitely many often, there must be a stage at which action (3) is performed.
If action (3) is performed with a ``correct $\mathcal{A}_s\subseteq\mathcal{A}$,'' this clearly leads to Player II's defeat (Player I's win).
%
However, one may think that an ``incorrect $\mathcal{A}_s$'' taking action (3) may prevent a correct $\mathcal{A}_s$ from taking action (3).
We show that this is not the case.

Let $\gamma\in \mathcal{A}_s$ be the first node that does not return a true outcome.
If it is a ${\sf RT}^1_2$- or $\limN$-node, the problem will not occur after some stage because the outcome is correct at almost all stages.
The same applies to ${\sf WKL}_{\leq 2}$-nodes where $n\in\om$ is the true outcome.
The problem arises when $\infty$ is the true outcome for ${\sf WKL}_{\leq 2}$-nodes.
In this case, $[\gamma,\gamma n]$ becomes passable for different $n$ at infinitely many stages.
That is, there are infinitely many stages $s$ with $\gamma n\in\mathcal{A}_s$ for some $n\in\om$.

Suppose that action (3) is performed with such an $\mathcal{A}_s$.
Let $\gamma$ be labeled with a query $B$, and suppose that the height of the current last branching node $v$ of $B$ is $n$.
Then $\Psi$ converts $\gamma n$ into a string $x$ that extends $v0$ and a string $x'$ that extends $v1$, which become initial segments of Player I's next moves.
However, the fact that action (3) was performed at stage $s$ means that Player II has declared victory in the game using only the information about $x$ and $x'$ before reaching a new branch in $B$.
Now, $\infty$ is the correct outcome, and $\Psi(\gamma\infty)$ gives the unique path through $B$, which extends either $x$ or $x'$.
Therefore, an output of the game when Player II declares vitory is an extension of some $u_i\upto|\sigma|+|I|$ when action (3) was performed with $\mathcal{A}_s$.
By action (3), it is guaranteed that $u_i$ is not a path through $T$.
Thus, even if action (3) is performed with an incorrect $\mathcal{A}_s$, our requirements are satisfied.
Therefore, in any cases, Player II cannot win.

To apply recursion trick, even if Player II violates the rule, the first move $T$ of Player I needs to obey the rule.
In this case, our algorithm may arrive at (3) infinitely often, but this just implies that $T$ converges to a tree which has a unique path.
In any case, $T$ is a convex tree, which means that Player I obeys the rule, and so Player I defeats Player II.
\end{proof}


Now, Theorem \ref{thm:main-sub-GW} follows from Propositions \ref{prop:sep-main5}, \ref{prop:sep-main4}, \ref{prop:sep-main6}, \ref{prop:sep-main23}, and \ref{prop:sep-main78}.

\subsection{Lifting Weihrauch separations to {\bf IZF}-separations}\label{sec:main-section-proof}

We now translate Weihrauch separation results in Section \ref{sec:Weihrauch-separation} into the context of realizability.
Recall that $\varphi$ is $\jump$-realizable over $\mathbb{P}$ (or $\varphi$ is realizable over the site $(\mathbb{P},\jump)$) if there is $e\in {\tt P}$ such that $e\reap \varphi$.
We say that $\varphi$ is {\em boldface $\jump$-realizable} over $\mathbb{P}$ if there is $e\in\mathbf{P}$ such that $e\reap \varphi$.
Let us recall the following key observations:
\begin{itemize}
\item 
By definition, $\neg\varphi$ is $\jump$-realizable over $\mathbb{P}$ if and only if $\varphi$ is not boldface $\jump$-realizable over $\mathbb{P}$.
\item For any partial multifunction $F$, by Lemma \ref{lem:idempotent} and Theorem \ref{thm:IZF-realizability}, ${\bf IZF}$ is $F^{\clo}$-realizable.
\item For any $j$-operator $\jump\leq_{\sf W}{\sf WKL}$, a tidy $\forall\exists$-principle $\varphi$ is $\jump$-realizable iff $F_\varphi\leq_{\sf W}\jump$ by Lemmas \ref{lem:below-WKL-rCT} and \ref{lem:main-lemma}.
One can also see that $\varphi$ is boldface $\jump$-realizable iff $F_\varphi\leq_{\sf W}^c\jump$.
\end{itemize}

Therefore, we hereby declare the following:
\begin{itemize}
\item All the principles we deal with below are tidy $\forall\exists$-principles.
\item Below we only consider partial multifunction $F\leq_{\sf W}{\sf WKL}$.
In this case, we have $\jump:=F^{\clo}\leq_{\sf W}{\sf WKL}^{\clo}\equiv_{\sf W}{\sf WKL}$ (see e.g.~\cite[Proposition 4.9]{HiJo16}).
\end{itemize}




\begin{proof}[Proof of Theorem \ref{thm:main-theorem}]
We prove the following results which are stronger than Theorem \ref{thm:main-theorem}:
\begin{enumerate}
\item ${\bf IZF}+\Sigma^0_1\dmlr+\neg{\sf RDIV}+\neg{\sf BE}_\mathbb{Q}$ is ${\sf LLPO}^{\clo}$-realizable. \label{item:main1}
\item ${\bf IZF}+\neg\Pi^0_1\lemr+{\sf BE}_\mathbb{Q}+\neg{\sf BE}+\neg{\sf RDIV}$ is $(\wklQ)^{\clo}$-realizable.\label{item:main2}
\item ${\bf IZF}+\neg\Pi^0_1\mbox{-}{\sf LEM}_\mathbb{R}+{\sf RDIV}+\neg{\sf BE}_\mathbb{Q}$ is $({\sf WKL}_{\rm aou})^{\clo}$-realizable. \label{item:main3}
\item ${\bf IZF}+\neg\Pi^0_1\lemr+{\sf IVT}_{\rm lin}+\neg{\sf BE}$ is $({\sf WKL}_{\rm clop})^{\clo}$-realizable. \label{item:main4}
\item ${\bf IZF}+{\sf BE}+\neg{\sf RDIV}$ is $({\sf WKL}_{\leq 2})^{\clo}$-realizable. \label{item:main5}
\item ${\bf IZF}+\neg\Pi^0_1\lemr+{\sf BE}_\mathbb{Q}+{\sf RDIV}+\neg{\sf IVT}_{\rm lin}+\neg{\sf BE}$ is $(\wklQ\times{\sf WKL}_{\rm aou})^{\clo}$-realizable.\label{item:main6}
\item ${\bf IZF}+\neg\Pi^0_1\mbox{-}{\sf LEM}_\mathbb{R}+{\sf BE}+{\sf RDIV}+\neg{\sf IVT}_{\rm lin}$ is $({\sf WKL}_{\leq 2}\times{\sf WKL}_{\rm aou})^{\clo}$-realizable. \label{item:main7}
\item ${\bf IZF}+\neg\Pi^0_1\mbox{-}{\sf LEM}_\mathbb{R}+{\sf BE}+{\sf IVT}_{\rm lin}+\neg{\sf IVT}$ is $({\sf WKL}_{\leq 2}\times{\sf WKL}_{\rm clop})^{\clo}$-realizable. \label{item:main8}
\item ${\bf IZF}+\neg\Pi^0_1\mbox{-}{\sf LEM}_\mathbb{R}+{\sf IVT}+\neg{\sf WKL}$ is $({\sf WKL}_{\sf conv})^{\clo}$-realizable. \label{item:main9}
\end{enumerate}

\noindent
(\ref{item:main1})
We use ${\sf LLPO}^{\clo}$-realizability (i.e., Lifshitz realizability; see below) to realize $\Sigma^0_1\mbox{-}{\sf DML}_\mathbb{R}+\neg{\sf RDIV}+\neg{\sf BE}_\mathbb{Q}$.

Recall that de Morgan's law $\Sigma^0_1\mbox{-}{\sf DML}_\mathbb{R}$ is formulated as ${\sf LLPO}$, so it is clear that ${\sf LLPO}^{\clo}$-realizability validates $\Sigma^0_1\mbox{-}{\sf DML}_\mathbb{R}$.
Moreover, ${\sf LLPO}^{\clo}$ is known to be Weihrauch equivalent to the {\em choice principle ${\sf K}_\N$ for compact sets in $\N$}, which is a multifunction that, given a bound $b\in\N$ and a sequence $a=(a_i)$ of natural numbers, if $V_{a,b}=\{n<b:(\forall i)\;n\not=a_i\}$ is nonempty, return any element from $V_{a,b}$.

\begin{fact}[\cite{PaTs16}]\label{fact:LLPO-closed-KN}
${\sf LLPO}^{\clo}\equiv_{\sf W}{\sf K}_\N$.
\end{fact}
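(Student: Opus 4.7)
The plan is to establish ${\sf LLPO}^\Game \equiv_{\sf W} {\sf K}_\N$ by proving both Weihrauch reductions separately.

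For the easier direction ${\sf K}_\N \leq_{\sf W} {\sf LLPO}^\Game$, I would have Player II in the reduction game perform iterative binary search. Given an instance $(b, a)$ of ${\sf K}_\N$ with $V_{a, b}$ nonempty, Player II maintains an interval $[\ell, r) \subseteq [0, b)$ known to intersect $V_{a, b}$, and at each round issues the ${\sf LLPO}$-query on the pair of $\Sigma^0_1$ sentences ``$V_{a, b} \cap [\ell, m) = \emptyset$'' and ``$V_{a, b} \cap [m, r) = \emptyset$'', where $m = \lfloor (\ell + r)/2 \rfloor$. The ${\sf LLPO}$-promise $\neg(\psi_0 \land \psi_1)$ holds because $V_{a, b} \cap [\ell, r) \neq \emptyset$, so ${\sf LLPO}$'s answer indicates a half still intersecting $V_{a, b}$, and Player II recurses on that half; after $\lceil \log_2 b \rceil$ rounds the interval collapses to a singleton on which Player II declares victory.

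For the harder direction ${\sf LLPO}^\Game \leq_{\sf W} {\sf K}_\N$, the first step is to exploit effective compactness of the codomain $\{0, 1\}$ of ${\sf LLPO}$: the compactness argument preceding Observation \ref{obs:compactness-game-bound} applies verbatim and yields ${\sf LLPO}^\Game \equiv_{\sf W} \bigsqcup_n {\sf LLPO}^{(n)}$. It therefore suffices to reduce each ${\sf LLPO}^{(n)}$ to a single ${\sf K}_\N$-call with bound $2^n$. For each $\sigma \in \{0, 1\}^n$, let ${\rm Valid}(\sigma)$ abbreviate ``for every $k < n$ at which Player II's strategy, fed $\sigma \upto k$ as Player I's first $k$ answers, issues an ${\sf LLPO}$-query $q_k$, the chosen $\Sigma^0_1$ disjunct $\psi_{\sigma(k)}^{q_k}$ is false''. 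This is a $\Pi^0_1$ condition on $\sigma$, and the set $W = \{\sigma \in \{0, 1\}^n : {\rm Valid}(\sigma)\} \subseteq [0, 2^n)$ is a nonempty bounded $\Pi^0_1$ subset of $\N$, since any actual winning play (padded arbitrarily after victory is declared) lies in $W$. A single ${\sf K}_\N$-call picks some $\sigma \in W$; tracing the play through Player II's strategy locates the first victory node along $\sigma$ and returns its declaration, giving ${\sf LLPO}^{(n)} \leq_{\sf W} {\sf K}_\N$ uniformly in $n$, and hence ${\sf LLPO}^\Game \leq_{\sf W} {\sf K}_\N$.

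The main obstacle is formulating ${\rm Valid}(\sigma)$ correctly on branches where Player II's own ${\sf LLPO}$-queries violate the ${\sf LLPO}$-promise: along an invalid $\sigma$, both $\psi_0^{q_k}$ and $\psi_1^{q_k}$ may happen to be true, so the condition must demand falsity only of the specific disjunct $\psi_{\sigma(k)}^{q_k}$ chosen by $\sigma$, not of the overall promise. With this careful phrasing the condition remains $\Pi^0_1$ regardless, and every truly valid play is certified as an element of $W$; the ${\sf K}_\N$-call then yields the equivalence recorded as Fact \ref{fact:LLPO-closed-KN} and attributed to Pauly--Tsuiki \cite{PaTs16}.
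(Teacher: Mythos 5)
The paper does not prove this equivalence; it is recorded as a Fact and attributed to Pauly--Tsuiki \cite{PaTs16}, so there is no in-paper proof to compare against. Your argument is nonetheless correct and self-contained. For ${\sf K}_\N\leq_{\sf W}{\sf LLPO}^\Game$, the binary-search strategy works: the statements ``$V_{a,b}\cap[\ell,m)=\emptyset$'' and ``$V_{a,b}\cap[m,r)=\emptyset$'' are each $\Sigma^0_1$ (finite conjunctions of $\Sigma^0_1$ facts), the ${\sf LLPO}$-promise is supplied by $V_{a,b}\cap[\ell,r)\neq\emptyset$, and any rule-obeying answer preserves nonemptiness of the intersection, so $\lceil\log_2 b\rceil$ rounds pin down an element. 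For ${\sf LLPO}^\Game\leq_{\sf W}{\sf K}_\N$, the appeal to the compactness argument before Observation~\ref{obs:compactness-game-bound} is legitimate since ${\sf LLPO}$'s outputs lie in the effectively compact space $\{0,1\}$, so a bound $n$ on the game length is computable from the instance $(x_0,\tau)$; your $\Pi^0_1$ predicate ${\rm Valid}(\sigma)$, demanding only falsity of the specific chosen disjunct $\psi_{\sigma(k)}^{q_k}$ at each query node, is indeed $\Pi^0_1$ and is witnessed nonempty by any genuine rule-obeying play of Player I (padded beyond victory), and the bound $n$ ensures every $\sigma\in W$ reaches a victory declaration within $n$ rounds. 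A slightly shorter route, closer to what the paper itself gestures at (citing Lifschitz's lemma and Westrick's fixed-point theorem), would be to prove ${\sf K}_\N\star{\sf K}_\N\leq_{\sf W}{\sf K}_\N$ directly --- the second bound ranges over the finite answer set of the first call and can be majorized, so the composition is again a single bounded $\Pi^0_1$ search --- and then conclude ${\sf LLPO}^\Game\leq_{\sf W}{\sf K}_\N^\Game\leq_{\sf W}{\sf K}_\N$ from ${\sf LLPO}\leq_{\sf W}{\sf K}_\N$ and Westrick's result that $g\leq_{\sf W}g\star g$ and $\mathbf{1}\leq_{\sf W}g$ imply $g^\Game\leq_{\sf W}g$. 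Your version trades that abstract lemma for the concrete compactness bound the paper develops for Observation~\ref{obs:compactness-game-bound}; both are valid.
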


To prove (\ref{item:main1}), we need to show that ${\sf LLPO}^{\clo}$-realizability refutes ${\sf BE}$ (equivalent to ${\sf WKL}_{\leq 2}$) and ${\sf RDIV}$ (equivalent to ${\sf WKL}_{\rm aou}$).
This essentially follows from the following:

\begin{lemma}\label{lem:sep-main1}
$\wklQ\not\leqcW{\sf K}_\N$ and ${\sf WKL}_{\rm aou}\not\leqcW{\sf K}_\N$.
\end{lemma}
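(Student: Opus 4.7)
The plan is to apply the recursion trick (Section~\ref{sec:recursion-trick}) in both cases, exploiting the crucial feature of ${\sf K}_\N$: for input $(b,a)$, the output lies in $\{0,\ldots,b-1\}$, so once Player II commits to a value of $b$, the set of possible outputs (and hence of candidate paths produced by the outer reduction) has cardinality at most $b$. Player I's construction of the tree is designed to diagonalize against all $b$ candidates; since the constructions are computable uniformly in an oracle, they yield continuous Weihrauch separations.

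For $\wklQ\not\leqcW{\sf K}_\N$, I would adapt the iterative-kill approach of Proposition~\ref{prop:sep-main4}. Fix Player II's strategy, giving continuous functions $T\mapsto(b(T),a(T))$ and $(T,n)\mapsto\alpha_n(T)$. Build a rational $2$-tree $T$ with two length-$s$ nodes $\ell_s,r_s$, using the default extension $\ell_{s+1}=\ell_s 1$, $r_{s+1}=r_s 0$ (which alone produces a rational $2$-tree branching only at the root). Once $b$ is committed, maintain a candidate set $A=\{0,\ldots,b-1\}$. At each later stage $s$, if some $n\in A$ has $\alpha_n(|v_s|)$ defined (where $v_s=\ell_s\land r_s$), perform a kill action: redirect the tree past $v_s$ so as to avoid $\alpha_n$'s direction, e.g., set $\ell_{s+1}=r_s 0$, $r_{s+1}=r_s 1$ if $\alpha_n$ points to $\ell_s$'s side of $v_s$, and symmetrically otherwise. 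Then remove $n$ from $A$. Each kill strictly decreases $|A|$ and introduces one new branching node, so at most $b$ kills occur. Afterwards, each $n\in V_{a,b}$ (for which $\alpha_n$ must be total, since Player II obeys on such $n$) has been killed, so $\alpha_n$ is not a path through $T$, and Player I defeats Player II by choosing any such $n$.

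For ${\sf WKL}_{\rm aou}\not\leqcW{\sf K}_\N$, I would use the one-shot collapse of Proposition~\ref{prop:sep-main5}. The default action keeps $T$ as the full binary tree $2^{<\om}$, which is an aou-tree. Once $b$ is committed, fix $L$ with $2^L>b$; by continuity there is a finite stage $s^*$ at which $\alpha_n\upto L$ is determined for every $n<b$, provided Player II obeys the rule. At stage $s^\star=\max(s^*+1,L)$, choose $\sigma\in 2^{s^\star}$ with $\sigma\upto L\neq\alpha_n\upto L$ for every $n<b$ (possible because $2^L>b$), commit $T$ to have $\sigma$ as its sole length-$s^\star$ node, and extend uniquely by $0$'s. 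The resulting $T$ is an aou-tree whose unique infinite path $\sigma 0^\om$ differs from every $\alpha_n$ already in its first $L$ bits, so no candidate is a path of $T$.

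The main obstacle, as usual for recursion trick arguments, is ensuring that Player I's move $T$ obeys the rule (being a valid rational $2$-tree, respectively aou-tree) even when Player II violates the rule. Both default actions alone yield valid trees (a rational $2$-tree branching only at the root; the full binary tree), so if Player II never commits to $b$ or never defines enough of the $\alpha_n$'s, the algorithm performs only finitely many actions and defaults forever afterward, still producing a tree of the required type. Hence the recursion trick delivers both continuous Weihrauch separations.
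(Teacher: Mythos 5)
Your route is genuinely different from the paper's: the paper obtains both separations as corollaries of Propositions~\ref{prop:sep-main4} and~\ref{prop:sep-main5}, using ${\sf K}_\N\equiv_{\sf W}{\sf LLPO}^\Game$ (Fact~\ref{fact:LLPO-closed-KN}) together with ${\sf LLPO}\leq_{\sf W}{\sf WKL}_{\rm aou}$ and ${\sf LLPO}\leq_{\sf W}{\sf WKL}_{\leq 2}$, so that ${\sf K}_\N$ lies below both $({\sf WKL}_{\rm aou})^\Game$ and $({\sf WKL}_{\leq 2})^\Game$ and the propositions apply directly. Your direct recursion-trick argument for $\wklQ\not\leqcW{\sf K}_\N$ is correct and self-contained; the opportunistic killing (act as soon as any $\alpha_n(|v_s|)$ appears, rather than waiting for all of them at once) is exactly what makes it robust to partiality of the outer reduction.

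The second half, ${\sf WKL}_{\rm aou}\not\leqcW{\sf K}_\N$, has a genuine gap. You wait for a stage at which $\alpha_n\upto L$ is defined for \emph{every} $n<b$, asserting this must happen ``provided Player II obeys the rule.'' But Player II's obligations only concern $n\in V_{a,b}={\sf K}_\N(b,a)$: the outer reduction $K(T,n)=\alpha_n$ is required to converge and return a path through $T$ only for those $n$, and is free to diverge for $n<b$ with $n\notin V_{a,b}$. If some such $\alpha_n$ diverges, your algorithm waits forever, $T$ stays the full tree $2^{<\om}$ (a perfectly valid aou-tree), and Player II's strategy succeeds on it --- the diagonalization never fires. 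The fix mirrors what you did for $\wklQ$ and what Proposition~\ref{prop:sep-main5} does for ``correct $x$'': since $V_{a,b}$ is co-c.e.\ relative to $a$, wait only until $\alpha_n\upto L$ is defined for every $n$ in the current approximation $V^s_{a,b}=\{n<b:(\forall i<s)\,n\neq a_i\}$. Such a stage exists (each $n<b$ either eventually drops out of $V^s_{a,b}$, or lies in $V_{a,b}$, in which case $\alpha_n$ is total), and diagonalizing against $\{\alpha_n:n\in V^s_{a,b}\}\supseteq\{\alpha_n:n\in V_{a,b}\}$ still works since $|V^s_{a,b}|\leq b<2^L$.
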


The former follows from Proposition \ref{prop:sep-main4} as ${\sf LLPO}\leq_{\sf W}{\sf WKL}_{\rm aou}$.
The latter is a known fact, which also follows from Proposition \ref{prop:sep-main5} as ${\sf LLPO}\leq_{\sf W}{\sf WKL}_{\leq 2}$.

Now, we discuss on realizability.
By Fact \ref{fact:LLPO-closed-KN}, ${\sf LLPO}^{\clo}$-realizability is equivalent to ${\sf K}_\N$-realizability, which may be considered as {\em Lifschitz realizability} (over the Kleene-Vesley algebra), and so the $\star$-closure property of ${\sf K}_\N$ has implicitly been known in the context of realizability (cf.~Lifschitz \cite[Lemma 3]{Lif}).
It is well-known that Lifschitz realizability validates ${\sf LLPO}$.
In our context, this is because ${\sf LLPO}$ is a tidy $\forall\exists$-principle and ${\sf LLPO}\leq_{\sf W}{\sf WKL}$, so ${\sf LLPO}^{\clo}$-realizability validates ${\sf LLPO}$ by Lemmas \ref{lem:below-WKL-rCT} and \ref{lem:main-lemma}.

If ${\sf BE}_\mathbb{Q}$ is boldface ${\sf LLPO}^{\clo}$-realizable, so is ${\sf WKL}_{=2}$ by Proposition \ref{prop:beQ-eq-wkl2}.
As seen in Observation \ref{obs:WKLs-are-tidy}, ${\sf WKL}_{=2}$ is a tidy $\forall\exists$-principle.
Hence, by the boldface version of Lemma \ref{lem:main-lemma}, we get ${\sf WKL}_{=2}\leqcW{\sf K}_\N$, which contradicts Lemma \ref{lem:sep-main1}.


Similarly, if ${\sf RDIV}$ is boldface ${\sf K}_\N$-realizable, then ${\sf WKL}_{\rm aou}$ is also ${\sf K}_\N$-realizable.
As seen in Observation \ref{obs:WKLs-are-tidy}, ${\sf WKL}_{\rm aou}$ is a tidy $\forall\exists$-principle.
Hence, by the boldface version of Lemma \ref{lem:main-lemma}, we get ${\sf WKL}_{\rm aou}\leqcW{\sf K}_\N$, which contradicts Lemma \ref{lem:sep-main1}.
%
This verifies the item (\ref{item:main1}).


\medskip
\noindent
(\ref{item:main2})
We use $(\wklQ)^{\clo}$-realizability to realize $\neg\Pi^0_1\lemr+{\sf BE}_\mathbb{Q}+\neg{\sf BE}+\neg{\sf RDIV}$.
By Observation \ref{obs:WKLs-are-tidy}, $\wklQ$ is a tidy $\forall\exists$-principle and $\wklQ\leq_{\sf W}{\sf WKL}$; hence, it is evident that $(\wklQ)^{\clo}$-realizability validates ${\sf BE}_\mathbb{Q}$ by Proposition \ref{prop:beQ-eq-wkl2}.
Thus, to prove (\ref{item:main2}), it suffices to show the following:

\begin{lemma}\label{lem:sep-main2}
${\sf LPO}\not\leqcW(\wklQ)^{\clo}$, ${\sf WKL}_{\leq 2}\not\leqcW(\wklQ)^{\clo}$ and ${\sf WKL}_{\rm aou}\not\leqcW(\wklQ)^{\clo}$.
\end{lemma}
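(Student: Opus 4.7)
I would handle the three separations by different methods: the first two reduce to known facts, while the third requires direct diagonalization via the recursion trick. For ${\sf WKL}_{\rm aou}\not\leqcW(\wklQ)^\Game$: every rational $2$-tree is a $2$-tree, so $\wklQ\leqW{\sf WKL}_{\leq 2}$ trivially, and hence $(\wklQ)^\Game\leqW({\sf WKL}_{\leq 2})^\Game$ by monotonicity of the game closure (a winning strategy for Player II in $G(\cdot,\wklQ)$ can be translated to one in $G(\cdot,{\sf WKL}_{\leq 2})$ by replacing each $\wklQ$-query with the same rational $2$-tree viewed as a $2$-tree). The separation then follows from Proposition~\ref{prop:sep-main5}. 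For ${\sf LPO}\not\leqcW(\wklQ)^\Game$: the classical Brattka--Gherardi separation gives ${\sf LPO}\not\leqcW{\sf WKL}$. Since ${\sf WKL}\star{\sf WKL}\equiv_{\sf W}{\sf WKL}$ is standard, Westrick's theorem (cited in Section~\ref{subsec:weihrauch-reducibility}) yields ${\sf WKL}^\Game\equiv_{\sf W}{\sf WKL}$; combined with $\wklQ\leqW{\sf WKL}$, we obtain $(\wklQ)^\Game\leqW{\sf WKL}$, and the separation follows.

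The main work is ${\sf WKL}_{\leq 2}\not\leqcW(\wklQ)^\Game$, which I would prove by adapting the recursion trick of Proposition~\ref{prop:sep-main5}. Since $\wklQ$ has effectively compact outputs (each rational $2$-tree has at most two paths in $2^\om$), the compactness argument preceding Observation~\ref{obs:compactness-game-bound} applies and gives $(\wklQ)^\Game\equiv_{\sf W}\bigsqcup_n(\wklQ)^{(n)}$, so it suffices to show ${\sf WKL}_{\leq 2}\not\leqcW(\wklQ)^{(n)}$ for each $n$. Fix Player II's strategy $\tau$ for the $n$-round reduction game with $\wklQ$-queries: each query returns a path in a rational $2$-tree having at most two infinite paths, so the non-deterministic tree of Player I's possible answers has at most $2^n$ leaves, producing at most $2^n$ candidate victory paths $\alpha_1,\dots,\alpha_{2^n}$. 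Player I then uses the recursion trick to construct a $2$-tree $T$: wait until the candidate paths' length-$(n+2)$ prefixes have all committed (which happens at a finite stage by effective compactness); choose $\sigma\in 2^{n+2}$ differing from $\alpha_i\upto(n+2)$ for every $i$ (possible since $2^{n+2}>2^n$); and declare $T$ to be the $2$-tree whose unique infinite path is $\sigma\fr 0^\om$, padded with a single dead branch at each level above $|\sigma|$ to maintain exactly two strings per level.

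The main obstacle is satisfying the recursion-trick requirement that Player I's first move $T$ obeys the rule even when Player II violates it: as in Proposition~\ref{prop:sep-main5}, one performs the decisive $T$-update only when Player II's queries look like rational $2$-trees at the current height, which ensures that $T$ remains a valid $2$-tree regardless of whether Player II's putative queries are in fact rational $2$-trees. Once this bookkeeping is in place, the counting $2^{n+2}>2^n$ ensures Player I defeats Player II, completing the separation.
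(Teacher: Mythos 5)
Your treatment of the first and third assertions coincides with the paper's: the first uses $\wklQ\leqW{\sf WKL}$, ${\sf WKL}^\Game\equiv_{\sf W}{\sf WKL}$, and ${\sf LPO}\not\leqW{\sf WKL}$; the third follows from $(\wklQ)^\Game\leqW({\sf WKL}_{\leq 2})^\Game$ and Proposition~\ref{prop:sep-main5}. The second assertion is where you diverge from the paper and where there is a genuine gap.

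The paper dispatches ${\sf WKL}_{\leq 2}\not\leqcW(\wklQ)^\Game$ without a new diagonalization: it observes $\wklQ\leqW{\sf WKL}_{\rm clop}\leqW\limN$ (Propositions~\ref{prop:clop-to-two} and~\ref{prop:dne-to-clop}), so that $(\wklQ)^\Game$ has a semi-continuous bound, and then cites Proposition~\ref{prop:sep-main23}. Your sketch instead attempts a fresh recursion-trick diagonalization patterned on Proposition~\ref{prop:sep-main5}, but the construction of the $2$-tree $T$ does not go through. The crucial asymmetry is this: in Proposition~\ref{prop:sep-main5} the output $T$ is an aou-tree, so the default action during the waiting phase is to set $T$ equal to all of $2^{<\om}$ up to the current height; the decisive step can then freely restrict to \emph{any} single string $\sigma$. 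For a $2$-tree $T$, no such ``keep everything open'' default exists --- the algorithm must commit to \emph{exactly two} nodes of $T$ at each level as the stages advance. By the time the waiting phase ends (at some stage $s\geq n+2$), the two nodes of $T$ at level $n+2$ are already fixed; you cannot then ``declare $T$ to be the $2$-tree whose unique infinite path is $\sigma\fr 0^\om$'' for a freshly chosen $\sigma\in 2^{n+2}$, since $\sigma$ need not be one of the two already-committed level-$(n+2)$ nodes. The counting $2^{n+2}>2^n$ is correct but cannot be exploited by a single committal choice. What is actually needed for a $2$-tree output is the \emph{adaptive} diagonalization used in the proof of Proposition~\ref{prop:sep-main23}: maintain a branching node $v_s$, and each time a candidate path's value at height $|v_s|$ becomes visible, move the branching node up by one level to dodge that candidate, repeating until the (semi-continuously bounded) list of candidates is exhausted. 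If you insist on a direct argument, you would essentially have to re-derive Lemma~\ref{lem:RT-semi-cont-bound} for $(\wklQ)^{(n)}$ and then redo Proposition~\ref{prop:sep-main23}'s construction; the shorter and intended route is to reduce $\wklQ$ to $\limN$ and cite Proposition~\ref{prop:sep-main23} directly.
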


The first assertion is trivial:
It is known that ${\sf WKL}^{\clo}\equiv_{\sf W}{\sf WKL}$.
This means that $P\leqW{\sf WKL}$ implies $P^{\clo}\leqW{\sf WKL}^{\clo}\equiv_{\sf W}{\sf WKL}$.
It is also well-known that ${\sf LPO}\not\leqW{\sf WKL}$, where recall that ${\sf LPO}$ is equivalent to $\Pi^0_1\mbox{-}{\sf LEM}_\mathbb{R}$.
The second assertion follows from Proposition \ref{prop:sep-main23} since $\wklQ\leq_{\sf W}{\limN}$ by Propositions \ref{prop:clop-to-two} and \ref{prop:dne-to-clop}.
The last assertion clearly follows from Proposition \ref{prop:sep-main5}.
This proves Lemma \ref{lem:sep-main2}; hence, as in (\ref{item:main1}), one can easily verify the item (\ref{item:main2}).


\medskip
\noindent
(\ref{item:main3})
We use $({\sf WKL}_{\rm aou})^{\clo}$-realizability to realize $\neg\Pi^0_1\mbox{-}{\sf LEM}_\mathbb{R}+{\sf RDIV}+\neg{\sf BE}_\mathbb{Q}$.
As ${\sf WKL}_{\rm aou}$ is tidy and equivalent to ${\sf RDIV}$, it is evident that $({\sf WKL}_{\rm aou})^{\clo}$-realizability validates ${\sf RDIV}$.
Thus, to prove (\ref{item:main3}), it suffices to show the following:

\begin{lemma}\label{lem:sep-main3}
${\sf LPO}\not\leqW({\sf WKL}_{\rm aou})^{\clo}$, and $\wklQ\not\leqW({\sf WKL}_{\rm aou})^{\clo}$.
\end{lemma}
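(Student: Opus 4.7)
The plan is to deduce both non-reducibilities from results already at hand, so neither part requires a fresh recursion-trick construction.

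For the first assertion ${\sf LPO}\not\leq_{\sf W}({\sf WKL}_{\rm aou})^\Game$, I would repeat the same two-step chain that was used in Lemma \ref{lem:sep-main2}. Since an aou-tree is in particular an infinite binary tree, ${\sf WKL}_{\rm aou}$ is a restriction of ${\sf WKL}$, and hence ${\sf WKL}_{\rm aou}\leq_{\sf W}{\sf WKL}$. Because the game closure $({-})^\Game$ is monotone under $\leq_{\sf W}$, and because ${\sf WKL}^\Game\equiv_{\sf W}{\sf WKL}$ (used already in the proof of Lemma \ref{lem:sep-main2}, and reflecting the fact that ${\sf WKL}$ is already closed under compositional product), we obtain $({\sf WKL}_{\rm aou})^\Game\leq_{\sf W}{\sf WKL}$. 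Combined with the classical separation ${\sf LPO}\not\leq_{\sf W}{\sf WKL}$, we conclude ${\sf LPO}\not\leq_{\sf W}({\sf WKL}_{\rm aou})^\Game$.

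For the second assertion $\wklQ\not\leq_{\sf W}({\sf WKL}_{\rm aou})^\Game$, I would invoke Proposition \ref{prop:sep-main4} directly: it establishes the stronger statement $\wklQ\not\leqcW({\sf WKL}_{\rm aou})^\Game$, and continuous non-reducibility trivially implies computable non-reducibility.

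Since both items are immediate corollaries of earlier results in the paper, the only thing to guard against is a slip in the monotonicity step of part one; in particular I want to make sure the argument ``${\sf WKL}^\Game\equiv_{\sf W}{\sf WKL}$'' is being applied correctly, which it is because ${\sf WKL}$ satisfies ${\sf WKL}\star{\sf WKL}\equiv_{\sf W}{\sf WKL}$ (a standard parallel-trees construction), so by Westrick's fixed-point characterisation of $\Game$ cited earlier, ${\sf WKL}^\Game\equiv_{\sf W}{\sf WKL}$. There is no genuine obstacle; the lemma is essentially a bookkeeping consequence of Proposition \ref{prop:sep-main4} together with the standard fact that ${\sf LPO}$ is not below ${\sf WKL}$.
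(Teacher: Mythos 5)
Your proof is correct and follows essentially the same route as the paper: the first assertion via ${\sf WKL}_{\rm aou}\leq_{\sf W}{\sf WKL}$, monotonicity of $\Game$, ${\sf WKL}^\Game\equiv_{\sf W}{\sf WKL}$, and ${\sf LPO}\not\leq_{\sf W}{\sf WKL}$ (exactly the chain the paper reuses from Lemma \ref{lem:sep-main2}), and the second assertion as an immediate consequence of Proposition \ref{prop:sep-main4}. The extra remark justifying ${\sf WKL}^\Game\equiv_{\sf W}{\sf WKL}$ via $\star$-closure and Westrick's fixed-point characterisation is a welcome elaboration of a step the paper merely asserts.
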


The first assertion is trivial as in Lemma \ref{lem:sep-main2}.
The second assertion clearly follows from Proposition \ref{prop:sep-main4}.
This proves Lemma \ref{lem:sep-main3}; hence, as in (\ref{item:main1}), one can easily verify the item (\ref{item:main3}).


\medskip
\noindent
(\ref{item:main4})
We use $({\sf WKL}_{\rm clop})^{\clo}$-realizability to realize $\neg\Pi^0_1\lemr+{\sf IVT}_{\rm lin}+\neg{\sf BE}$.
As seen in Observation \ref{obs:WKLs-are-tidy}, ${\sf WKL}_{\rm clop}$ is a tidy $\forall\exists$-principle.
Hence, it is evident that $({\sf WKL}_{\rm clop})^{\clo}$-realizability validates ${\sf IVT}_{\rm lin}$ by Proposition \ref{prop:lin-eq-clop}.
To prove (\ref{item:main4}), it suffices to show the following:

\begin{lemma}\label{fact:separate-main2}
${\sf LPO}\not\leqcW({\sf WKL}_{\rm clop})^{\clo}$, and ${\sf WKL}_{\leq 2}\not\leqcW({\sf WKL}_{\rm clop})^{\clo}$.
\end{lemma}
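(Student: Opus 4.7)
The plan is to deduce both non-reductions by composing the inclusions ${\sf WKL}_{\rm clop}\leq_{\sf W}{\sf WKL}$ and ${\sf WKL}_{\rm clop}\leq_{\sf W}\Sigma^0_2\dner$ with monotonicity of the game closure, and then appealing to results already established. The underlying monotonicity is immediate from the equivalence $f\leq_{\sf W}g^\Game\iff f\leq_{\Game{\sf W}}g$ together with transitivity of $\leq_{\Game{\sf W}}$ \cite{HiJo16}: if $h\leq_{\sf W}g$ then $h^\Game\leq_{\Game{\sf W}}h\leq_{\Game{\sf W}}g$, hence $h^\Game\leq_{\sf W}g^\Game$.

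For the first assertion, every convex clopen tree is in particular an infinite binary tree, so ${\sf WKL}_{\rm clop}\leq_{\sf W}{\sf WKL}$ and therefore $({\sf WKL}_{\rm clop})^\Game\leq_{\sf W}{\sf WKL}^\Game$. Since ${\sf WKL}\geq_{\sf W}\mathbf{1}$ and is closed under $\star$, Westrick's fixed-point result \cite{westrick2020note} yields ${\sf WKL}^\Game\equiv_{\sf W}{\sf WKL}$. Combined with the standard separation ${\sf LPO}\not\leqcW{\sf WKL}$ (used in exactly the same way in the first part of Lemma \ref{lem:sep-main2}), this gives ${\sf LPO}\not\leqcW({\sf WKL}_{\rm clop})^\Game$.

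For the second assertion, by Propositions \ref{prop:dne-to-clop} and \ref{prop:dner-limN} we have ${\sf WKL}_{\rm clop}\leq_{\sf W}\Sigma^0_2\dner\equiv_{\sf W}{\limN}$, and so $({\sf WKL}_{\rm clop})^\Game\leq_{\sf W}{\limN}^\Game$ by the monotonicity above. The key observation is that ${\limN}^\Game$ admits a semicontinuous bound: this is the specialization of Lemma \ref{lem:RT-semi-cont-bound} obtained by dropping the ${\sf RT}^1_2$-factor from the product, and the tree-of-plays construction there carries over verbatim once the ${\sf RT}^1_2$-nodes are deleted (only ${\limN}$-nodes remain, each having a unique current true outcome, so the set $B$ of candidate outputs grows to a finite limit). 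The class of multifunctions admitting a semicontinuous bound is plainly closed under continuous Weihrauch reducibility — given $f\leqcW g$ via $(H,K)$ and a bound $(\tilde g,\tilde b)$ for $g$, the pair $\bigl((x,n)\mapsto K(x,\tilde g(H(x),n)),\,x\mapsto\tilde b(H(x))\bigr)$ witnesses one for $f$ — so $({\sf WKL}_{\rm clop})^\Game$ itself admits a semicontinuous bound. Proposition \ref{prop:sep-main23} then yields ${\sf WKL}_{\leq 2}\not\leqcW({\sf WKL}_{\rm clop})^\Game$.

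I anticipate no substantive obstacle: both parts are corollaries of results already in place. The only point requiring care is the verification that a semicontinuous bound transfers along continuous reductions, and the mild bookkeeping needed to confirm that the proof of Lemma \ref{lem:RT-semi-cont-bound} genuinely does not use the ${\sf RT}^1_2$-component — which it does not, beyond the branching structure it introduces on the tree of plays.
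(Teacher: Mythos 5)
Your first part (\textsc{LPO} vs.\ $({\sf WKL}_{\rm clop})^\Game$) is exactly the paper's argument: \textsc{WKL}$^\Game\equiv_{\sf W}$\textsc{WKL}, monotonicity of $\Game$, and the classical \textsc{LPO}$\not\leqcW$\textsc{WKL}.

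For the second part, your overall route is sound, but the claimed closure lemma is not correct as argued, and it is also unnecessary. If $f\leqcW g$ via $(H,K)$ and $g(y)\subseteq\{\tilde g(y,n):n<\tilde b(y)\}$, then the set $\{K(x,\tilde g(H(x),n)):n<\tilde b(H(x))\}$ contains \emph{some} elements of $f(x)$ (indeed each such $K(x,\cdot)$ lands in $f(x)$), but it need not contain \emph{all} of $f(x)$: a Weihrauch reduction never promises that every solution of $f$ is produced by $K$. So the pair you propose is not a semicontinuous bound for $f$; at best it bounds a refinement $f'\subseteq f$. The fix is trivial and renders the transfer step moot: from ${\sf WKL}_{\leq 2}\leqcW({\sf WKL}_{\rm clop})^\Game\leqcW\limN^\Game$ one gets ${\sf WKL}_{\leq 2}\leqcW\limN^\Game$ by transitivity of $\leqcW$, and then Proposition \ref{prop:sep-main23} applies directly to $\limN^\Game$ once you know the latter has a semicontinuous bound. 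The paper avoids even the specialization of Lemma \ref{lem:RT-semi-cont-bound}: it uses ${\sf WKL}_{\rm clop}\leq_{\sf W}\limN\leq_{\sf W}{\sf RT}^1_2\times\limN$, monotonicity of $\Game$, and the ``in particular'' clause ${\sf WKL}_{\leq 2}\not\leqcW({\sf RT}^1_2\times\limN)^\Game$ already recorded in Proposition \ref{prop:sep-main23}. Your specialization of the tree-of-plays argument to $\limN^\Game$ alone is perfectly valid, just redundant given what is already available.
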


The first assertion is trivial as in Lemma \ref{lem:sep-main2}.
The second assertion follows from Proposition \ref{prop:sep-main23} since ${\sf WKL}_{\rm clop}\leq_{\sf W}{\limN}$ by Proposition \ref{prop:dne-to-clop}.
This proves Lemma \ref{fact:separate-main2}; hence, as in (\ref{item:main1}), one can easily verify the item (\ref{item:main4}).


\medskip
\noindent
(\ref{item:main5})
We use $({\sf WKL}_{\leq 2})^{\clo}$-realizability to realize ${\sf BE}+\neg{\sf RDIV}$.
As ${\sf WKL}_{\leq 2}$ is equivalent to ${\sf BE}$, it is evident that $({\sf WKL}_{\leq 2})^{\clo}$-realizability validates ${\sf BE}$.
Moreover, $({\sf WKL}_{\leq 2})^{\clo}$-realizability refutes ${\sf RDIV}$ by Proposition \ref{prop:sep-main5}.
Hence, as in (1), one can easily verify the item (\ref{item:main5}).


\medskip
\noindent
(\ref{item:main6})
We use $(\wklQ\times{\sf WKL}_{\rm aou})^{\clo}$-realizability to realize $\neg\Pi^0_1\lemr+{\sf BE}_\mathbb{Q}+{\sf RDIV}+\neg{\sf IVT}_{\rm lin}+\neg{\sf BE}$.
It is evident that $(\wklQ\times{\sf WKL}_{\rm aou})^{\clo}$-realizability validates ${\sf BE}_\mathbb{Q}+{\sf RDIV}$.
To prove (\ref{item:main6}), it suffices to show the following:

\begin{lemma}\label{fact:separate-main2b}
${\sf LPO}\not\leqcW(\wklQ\times{\sf WKL}_{\rm aou})^{\clo}$, ${\sf WKL}_{\rm clop}\not\leqcW(\wklQ\times{\sf WKL}_{\rm aou})^{\clo}$, and ${\sf WKL}_{\leq 2}\not\leqcW(\wklQ\times{\sf WKL}_{\rm aou})^{\clo}$.
\end{lemma}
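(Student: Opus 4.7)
For the first assertion, I would argue as in Lemma~\ref{lem:sep-main2}: since $\wklQ,{\sf WKL}_{\rm aou}\leqW{\sf WKL}$ and ${\sf WKL}$ is closed under products, $(\wklQ\times{\sf WKL}_{\rm aou})^\Game\leqW{\sf WKL}^\Game\equiv_{\sf W}{\sf WKL}$, and the standard separation ${\sf LPO}\not\leqW{\sf WKL}$ finishes the job. For the second assertion, I would deduce it from Proposition~\ref{prop:sep-main6} by monotonicity: since $\wklQ$ is by definition the restriction of ${\sf WKL}_{\leq 2}$ to rational $2$-trees, the reduction $\wklQ\times{\sf WKL}_{\rm aou}\leqW{\sf WKL}_{\leq 2}\times{\sf WKL}_{\rm aou}$ passes to the game closure, so a continuous reduction ${\sf WKL}_{\rm clop}\leqcGW\wklQ\times{\sf WKL}_{\rm aou}$ would yield one to $({\sf WKL}_{\leq 2}\times{\sf WKL}_{\rm aou})^\Game$, contradicting Proposition~\ref{prop:sep-main6}.

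The third assertion is the main new content, and I plan to adapt the recursion-trick argument of Proposition~\ref{prop:sep-main6} to the more rigid setting of $2$-trees. Consider the reduction game in which Player I plays a $2$-tree $T$, Player II queries pairs $(A_\bullet,B_\bullet)$ consisting of an aou-tree and a rational $2$-tree, and Player II eventually declares victory with $\alpha$ purported to be an infinite path through $T$. Fix Player II's computable strategy and apply recursion trick to describe an algorithm constructing $T$. The algorithm would maintain, at each stage $s$, two nodes $\ell_s,r_s\in 2^s$ of $T$, with default extension $\ell_{s+1}=\ell_s 0$ and $r_{s+1}=r_s 1$. When Player II declares victory at some round $n$, a non-deterministic analysis (two path choices per rational $2$-tree $B_\bullet$, with the aou-choices $x^i_A$ held fixed at their current values) produces at most $2^n$ candidate outputs $\alpha_1,\ldots,\alpha_K$. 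Since any infinite path through $T$ must agree with $\ell_{s'}$ or $r_{s'}$ on its first $s'$ bits, it suffices to diverge both $\ell$ and $r$ from all candidates; I plan to achieve this by a halving procedure over the next $n+1$ levels of $T$, extending $\ell$ at each step by the bit opposite to the majority of $\alpha_j(s+j+1)$ over the candidates still agreeing with $\ell_{s+j}$, and analogously for $r$. Each such step at least halves the surviving candidate set, so after $n+1$ steps none remains. If Player II forces some aou-tree $A_\bullet$ to have a unique path, Player I re-selects $x^i_A$ to be this unique path and restarts the halving from the current stage; since there are only $n$ aou-queries, this restart can occur at most $n$ times, so the construction terminates in $O(n^2)$ levels, after which default extension resumes.

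The main obstacle is precisely the rigidity of the $2$-tree: unlike the basic clopen tree of Proposition~\ref{prop:sep-main6}, which admits a single retroactive pruning to extensions of a fresh string, a $2$-tree must contain exactly two strings at every level, each extending a string at the previous level, so no single-step ``prune to $\sigma$'' move is available. The halving trick compensates exactly for this rigidity, since $\lceil\log_2 2^n\rceil+1=n+1$ levels suffice to divert both $\ell$ and $r$ from all at most $2^n$ candidate prefixes. Finally, as in Proposition~\ref{prop:sep-main6}, I would verify that even if Player II violates the rule the default non-branching extension always keeps $T$ a valid $2$-tree, legitimizing the use of recursion trick.
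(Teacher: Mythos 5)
Your treatment of the first two assertions matches the paper: the first is the standard $\leq_{\sf W}{\sf WKL}$ argument (as in Lemma~\ref{lem:sep-main2}), and the second is the monotonicity deduction from Proposition~\ref{prop:sep-main6}. Both are exactly what the paper does.

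For the third assertion you take a genuinely different route. The paper observes that $\wklQ,{\sf WKL}_{\rm aou}\leq_{\sf W}{\sf WKL}_{\rm clop}\leq_{\sf W}\limN$ (Propositions~\ref{prop:clop-to-two}, \ref{prop:clop-to-aou}, \ref{prop:dne-to-clop} plus Proposition~\ref{prop:dner-limN}), so $(\wklQ\times{\sf WKL}_{\rm aou})^\Game\leq_{\sf W}({\sf RT}^1_2\times\limN)^\Game$, and then the whole separation falls out of the semicontinuous-bound machinery (Lemma~\ref{lem:RT-semi-cont-bound} and Proposition~\ref{prop:sep-main23}) already developed for other parts of Theorem~\ref{thm:main-theorem}. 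This reuse is the whole point of introducing the semicontinuous-bound notion: it packages the "rigidity of $2$-trees against finitely-bounded multifunctions" once and for all, so no new recursion-trick argument is needed here. Your proposal instead runs a fresh recursion-trick argument directly on the game $G({\sf WKL}_{\leq 2},\wklQ\times{\sf WKL}_{\rm aou})$. The halving trick you use to diverge $\ell$ and $r$ from up to $2^n$ candidates over $n+1$ levels is a nice device and compensates correctly for the fact that a $2$-tree cannot be retroactively pruned the way a basic clopen tree can.

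However, as a sketch, the third-assertion argument leaves two nontrivial gaps. First, you freely speak of "Player II declares victory at some round $n$" as if $n$ were a datum, but the game $G(\cdot,\cdot)$ is a priori unbounded; you need the compactness step (Observation~\ref{obs:compactness-game-bound}) that the paper uses in Proposition~\ref{prop:sep-main6} to reduce to $\bigsqcup_n(\wklQ\times{\sf WKL}_{\rm aou})^{(n)}$ before anything is pre-bounded. Second, and more seriously, the "two path choices per rational $2$-tree $B_\bullet$" are not determined at any finite stage: a rational $2$-tree is allowed to keep branching until it stabilizes, and Player I's algorithm never observes the stabilization directly. To make the nondeterministic analysis yield only $2^n$ candidate outputs one has to invoke the compactness-plus-continuity modulus argument (the outer reduction $\alpha$, restricted to the compact set of rule-obeying plays, depends on its inputs only up to some finite level $\ell$; the two level-$\ell$ nodes of each $B_\bullet$ then determine the candidates, and later rebranching of $B_\bullet$ cannot change them). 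You address the analogous collapse for the aou-trees but not the rebranching for the $2$-trees, and that is exactly where the difficulty lies. Both gaps are likely fillable, but the paper's semicontinuous-bound route avoids them entirely, and moreover isolates the structural reason for the separation ($\wklQ\times{\sf WKL}_{\rm aou}$ sits below the single-valued $\limN$, hence has a semicontinuous bound, whereas ${\sf WKL}_{\leq 2}$ cannot be solved by anything with a semicontinuous bound).
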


The first assertion is trivial as in Lemma \ref{lem:sep-main2}.
The second assertion clearly follows from Proposition \ref{prop:sep-main6}.
The last assertion follows from Proposition \ref{prop:sep-main23} since $\wklQ,{\sf WKL}_{\rm aou}\leq_{\sf W}{\limN}$ by Proposition \ref{prop:dne-to-clop}.
This proves Lemma \ref{fact:separate-main2b}; hence, as in (\ref{item:main1}), one can easily verify the item (\ref{item:main6}).


\medskip
\noindent
(\ref{item:main7})
We use $({\sf WKL}_{\leq 2}\times{\sf WKL}_{\rm aou})^{\clo}$-realizability to realize $\neg\Pi^0_1\mbox{-}{\sf LEM}_\mathbb{R}+{\sf BE}+{\sf RDIV}+\neg{\sf IVT}_{\rm lin}$.
It is evident that $({\sf WKL}_{\leq 2}\times{\sf WKL}_{\rm aou})^{\clo}$-realizability validates ${\sf BE}+{\sf RDIV}$.
To prove (\ref{item:main7}), it suffices to show the following:

\begin{lemma}\label{lem:sep-main-9}
${\sf LPO}\not\leqcW({\sf WKL}_{\leq 2}\times{\sf WKL}_{\rm aou})^{\clo}$, and ${\sf WKL}_{\rm clop}\not\leqcW({\sf WKL}_{\leq 2}\times{\sf WKL}_{\rm aou})^{\clo}$.
\end{lemma}

The first assertion is trivial as in Lemma \ref{lem:sep-main2}.
The second assertion follows from Proposition \ref{prop:sep-main78} since ${\sf WKL}_{\rm clop}\leq_{\sf W}{\limN}$ by Proposition \ref{prop:dne-to-clop}.
This proves Lemma \ref{lem:sep-main-9}; hence, as in (\ref{item:main1}), one can easily verify the item (\ref{item:main7}).


\medskip
\noindent
(\ref{item:main8})
We use $({\sf WKL}_{\leq 2}\times{\sf WKL}_{\rm clop})^{\clo}$-realizability to realize $\neg\Pi^0_1\mbox{-}{\sf LEM}_\mathbb{R}+{\sf BE}+{\sf IVT}_{\rm lin}+\neg{\sf IVT}$.
It is evident that $({\sf WKL}_{\leq 2}\times{\sf WKL}_{\rm clop})^{\clo}$-realizability validates ${\sf BE}+{\sf IVT}_{\sf lin}$ by Proposition \ref{prop:dne-to-clop}.
To prove (\ref{item:main8}), it suffices to show the following:

\begin{lemma}\label{lem:sep-main-8}
${\sf LPO}\not\leqcW({\sf WKL}_{\leq 2}\times{\sf WKL}_{\rm clop})^{\clo}$, and ${\sf WKL}_{\rm conv}\not\leqcW({\sf WKL}_{\leq 2}\times{\sf WKL}_{\rm clop})^{\clo}$.
\end{lemma}

The first assertion is trivial as in Lemma \ref{lem:sep-main2}.
The second assertion clearly follows from Proposition \ref{prop:sep-main6}.
This proves Lemma \ref{lem:sep-main-8}; hence, as in (\ref{item:main1}), one can easily verify the item (\ref{item:main8}).


\medskip
\noindent
(\ref{item:main9}): Straightforward.
\end{proof}

\begin{ack}
The author would like to thank Hajime Ishihara and Takako Nemoto for proposing the separation problem of ${\sf IVT}, {\sf BE}, {\sf LLPO}$ around 2014.
The author also thank Hajime Ishihara for reminding him of this problem at the Oberwolfach workshop in November 2017.
The author thank Takako Nemoto for encouraging the publication of this article, which was available as a preprint in 2020, in November 2024.
The author also would like to thank Matthew de Brecht and Satoshi Nakata for valuable discussions.
In the early stages of this study, the author was partially supported by JSPS KAKENHI Grant 15H03634, 19K03602, and the JSPS Core-to-Core Program (A. Advanced Research Networks).
The author's research was also partially supported by JSPS KAKENHI Grant 22K03401 and 23K28036.
\end{ack}

\bibliographystyle{plain}
\bibliography{references}

\end{document}



\subsubsection{TODO}

In later sections, we will show $\leq_{\gW}^c$-separation results for most principles in Figure \ref{figure:principles-over-izf} (where recall that $\leq_{\gW}^c$ stands for continuous generalized Weihrauch reducibility):

\begin{theorem}\label{thm:main-sub-GW}~
\begin{enumerate}
\item ${\sf RDIV}\not\leqcGW{\sf BE}$.
\item ${\sf BE}_\mathbb{Q}\not\leqcGW{\sf RDIV}$.
\item ${\sf IVT}_{\rm lin}\not\leqcGW {\sf RDIV}\times{\sf BE}$.
\item ${\sf BE}\not\leqcGW\Sigma^0_2\dmlr\times\Sigma^0_2\dner$.
\item ${\sf IVT}\not\leqcGW\Sigma^0_2\dmlr\times\Sigma^0_2\dner\times{\sf BE}$.
\end{enumerate}
\end{theorem}

However, not every such $\leq_{\gW}^c$-separation result yields an ${\bf IZF}$-separation result.


\newpage

\com{/TODO}
If we have the axiom ${\sf AC}_{\om,\om}!$ of unique choice on natural numbers, then $\Pi^0_1\lemr$ implies the existence of the Turing jump of a given real.
This implies the axiom scheme ${\sf ACA}$ of arithmetical comprehension, which clearly entails the law of excluded middle for each level of the arithmetical hierarchy.
By this reason, {\em under unique choice ${\sf AC}_{\om,\om}!$, the arithmetical hierarchy of the law of excluded middle collapses}.

A realizability model always validates the unique choice ${\sf AC}_{\om,\om}!$.
Thus, for instance, the following seems still open:

\begin{question}
For $n>0$, does ${\sf EL}_0$ prove $\Sigma^0_n\lemr\to\Sigma^0_{n+1}\lemr$?
\end{question}

We now go back to the discussion in Section \ref{sec:internal-Baire}:
The discrete limit operation $\lim_\N$ is closed under the compositional product, but not parallelizable, and its parallelization $\lim$ is not closed under the compositional product.
Obviously, its $\star$-closure $\lim^{\clo}$ dominates the whole arithmetical hierarchy.
A similar argument also applies for ${\sf LPO}$.
Roughly speaking, $\lim_\N$ can be thought of as the law of double negation elimination for $\Sigma^0_2$-formulas, $\Sigma^0_2\mbox{-}{\sf DNE}$ (see Proposition \ref{prop:dner-limN}).
The parallelization of a single-valued function is essentially the axiom ${\sf AC}_{\om,\om}!$ of unique choice on natural numbers.
The unique choice property for our realizability notions comes from the condition (4) in Lemma \ref{lem:pca-renewal}.
This is another way of explaining the collapsing phenomenon of the arithmetical hierarchy of the law of excluded middle.

In summary, in ${\sf AC}_{\om,\om}!$-models, ${\sf LPO}$ implies ${\sf ACA}$, so in particular ${\sf WKL}$.
However, it is easy to see that ${\sf WKL}\not\leq_{\gW}{\sf LPO}$.
This means that a $\leq_{\gW}$-separation result does not always imply a separation result in constructive reverse mathematics.
Then, we are also interested in how ${\sf AC}_{\om,\om}!$-models and generalized Weihrauch reducibility $\leq_{\gW}$ are different.
This is what we will deal with in the later sections.
\com{消す？/}

\subsubsection{Auxiliary principles}

To understand the structure of generalized Weihrauch reducibility $\leq_{\gW}$, we also consider the following principles ${\sf BE}_\mathbb{Q}$ and ${\sf IVT}_{\rm lin}$.
These two principles may look weird, but we consider them since they naturally occur in our proofs.
Indeed, all of these principles turn out to be equivalent to very weak variants of weak K\"onig's lemma (and indeed equivalent to weak K\"onig's lemma under countable choice); see also Section \ref{sec:weak-variants-wkl}.

\subsubsection*{Binary expansion for regular Cauchy rationals ${\sf BE}_\mathbb{Q}$}

We next consider the binary expansion principle ${\sf BE}_\mathbb{Q}$ for regular Cauchy rationals (more precisely, for non-irrationals), which states that, for any regular Cauchy real, if it happens to be a rational, then it has a binary expansion:
\[(\forall x\in[0,1])\;\left[\neg(\forall a,b\in\mathbb{Z},\;ax\not=b)\;\longrightarrow\;\exists f\colon\N\to\{0,1\},\;x=\sum_{n=1}^\infty 2^{-f(n)}\right].\]

\subsubsection*{Intermediate value theorem for piecewise linear maps ${\sf IVT}_{\rm lin}$}

Finally, we consider a (weird) variant of the intermediate value theorem.
A rational piecewise linear map on $\mathbb{R}$ is determined by finitely many rational points $\bar{p}=(p_0,\dots,p_\ell)$ in the plane.
Then, any continuous function on $\mathbb{R}$ can be represented as the limit of a sequence $(f_s)_{s\in\om}$ of rational piecewise linear maps coded by $(\bar{p}^s)_{s\in\om}$, where any $p^{s+1}_i$ belongs to the $2^{-s-1}$-neighborhood of the graph of $f_s$.
That is, the sequence $(\bar{p}^s)_{s\in\om}$ codes an approximation procedure of a continuous function $f$.
Then, consider the case that a code $(\bar{p}^s)_{s\in\om}$ of an approximation stabilizes, in the sense that $\bar{p}^s=\bar{p}^t$ for sufficiently large $s,t$.
Then, of course, it converges to a rational piecewise linear map.

In other words, we represent a continuous function as usual, but we consider the case that such a continuous function {\em happens to be a rational piecewise linear map}.
Formally, if $f$ is coded by $(\bar{p}^s)_{s\in\om}$ which does not change infinitely often (that is, $\neg\forall s\exists t>s\;\bar{p}^t\not=\bar{p}^s$) then we call $f$ {\em a rational piecewise linear map as a discrete limit}.
The principle ${\sf IVT}_{\rm lin}$ states that if $f\colon[0,1]\to\mathbb{R}$ is a rational piecewise linear map as a discrete limit such that $f(0)<0<f(1)$ then there is $x\in[0,1]$ such that $f(x)=0$.

\end{document}

\begin{proof}[Proof of Theorem \ref{thm:main-theorem}]
(1)
Consider ${\sf LLPO}^{\clo}$-realizability.
By Fact \ref{fact:LLPO-closed-KN}, this is equivalent to ${\sf K}_\N$-realizability and to Lifshitz realizability.
It is evident that ${\sf K}_\N$-realizability validates ${\sf LLPO}$ (so $\Sigma^0_1\dmlr$); see also Chen-Rathjen \cite{ChRa12}.

If ${\sf BE}$ is boldface ${\sf LLPO}^{\clo}$-realizable, so is ${\sf WKL}_{\leq 2}$.
Then, as in the argument in Section \ref{sec:internal-Baire}, one can see that there is a boldface ${\sf LLPO}^{\clo}$-realizable function which, given $(T,a,b)$, where $T$ is a tree, $a$ witnesses that $T$ is infinite, and $b$ witnesses that $T$ has at most two nodes at each level, returns an infinite path through $T$.
However given such a $T$ one can always recover $a$ and $b$ in an effective manner, so they have no extra information.
This means that ${\sf WKL}_2\leqcW{\sf K}_\N$, which contradicts Fact \ref{fact:separate-KN-AoUC}.

Similarly, if ${\sf RDIV}$ is boldface ${\sf K}_\N$-realizable, then weak K\"onig's lemma for aou-trees is also ${\sf K}_\N$-realizable, so there is a boldface ${\sf K}_\N$-realizable function which, given $(T,a,b)$, where $T$ is a tree, $a$ witnesses that $T$ is infinite, and $b$ witnesses that $T$ has all nodes or a single node at each level, returns an infinite path through $T$.
However given such a $T$ one can always recover $a$ and $b$ in an effective manner, so they have no extra information.
This means that ${\sf RDIV}\leqcW{\sf K}_\N$, which contradicts Fact \ref{fact:separate-KN-AoUC}.

Consequently, ${\bf IZF}+\Sigma^0_1\dmlr+\neg{\sf RDIV}+\neg{\sf BE}$ is ${\sf LLPO}^{\clo}$-realizable.

(2)
Consider $({\sf Lim}_\N)^{\clo}$-realizability, which is equivalent to ${\limN}$-realizability by Fact \ref{fact:limN-game}.
Note that any realizability validates Markov's principle; hence, as mentioned in Section \ref{sec:lem-for-2}, ${\limN}$-realizability validates $\Sigma^0_2\dner$.
If ${\sf BE}$ is boldface ${\sf Lim}_\N$-realizable, then as above we have ${\sf WKL}_{\leq 2}\leqcW{\sf Lim}_\N$, which contradicts Proposition \ref{prop:limN-redutes-BE}.
Hence, ${\bf IZF}+\Sigma^0_2\dner+\neg{\sf BE}$ is $({\sf Lim}_\N)^{\clo}$-realizable.

(3)
Consider $({\sf RT}^1_2)^{\clo}$-realizability.
Note that any realizability validates Markov's principle; hence an in Proposition \ref{prop:dmlr-rt12}, one can check that $\Sigma^0_2\dmlr$ is $({\sf RT}^1_2)^{\clo}$-realizable.
If ${\sf BE}$ is boldface $({\sf RT}^1_2)^{\clo}$-realizable, then as above we have ${\sf WKL}_{\leq 2}\leqcW({\sf RT}^1_2)^{\clo}$, which contradicts Proposition \ref{prop:WKL2-RT12-sepa}.
Hence, ${\bf IZF}+\Sigma^0_2\dmlr+\neg{\sf BE}$ is $({\sf RT}^1_2)^{\clo}$-realizable.

(4)
Consider $({\sf AoUC})^{\clo}$-realizability.
It is evident that $({\sf AoUC})^{\clo}$-realizability validates ${\sf AoUC}$, so ${\sf RDIV}$ as well.
If ${\Sigma^0_1\lemr}$ is boldface $({\sf AoUC})^{\clo}$-realizable, so is ${\sf LPO}$; however it clearly implies ${\sf LPO}\leqcW{\sf AoUC}^{\clo}$, which contradicts Fact \ref{fact:trivial-lpo-aouc}.
If ${\sf BE}$ is boldface $({\sf AoUC})^{\clo}$-realizable, then as above we have ${\sf WKL}_{\leq 2}\leqcW({\sf AoUC})^{\clo}$, which contradicts Proposition \ref{prop:sep-wkl2-aoucg}.
Hence, ${\bf IZF}+\neg\Pi^0_1\lemr+{\sf RDIV}+\neg{\sf BE}$ is $({\sf AoUC})^{\clo}$-realizable.

(5)
Consider $({\sf WKL}_{\leq 2})^{\clo}$-realizability.
It is evident that $({\sf WKL}_{\leq 2})^{\clo}$-realizability validates ${\sf WKL}_{\leq 2}$, so ${\sf BE}$ as well.
If ${\sf RDIV}$ is boldface $({\sf WKL}_{\leq 2})^{\clo}$-realizable, then as above we have ${\sf AoUC}\leqcW({\sf WKL}_{\leq 2})^{\clo}$, which contradicts Proposition \ref{prop:aouc-sep-wkl-2-g}.
Hence, ${\bf IZF}+{\sf BE}+\neg{\sf RDIV}$ is $({\sf WKL}_{\leq 2})^{\clo}$-realizable.

(6)
Consider $({\sf WKL}_{\leq 2}\times{\sf AoUC})^{\clo}$-realizability.
As above, it is evident that $({\sf WKL}_{\leq 2}\times{\sf AoUC})^{\clo}$-realizability validates both ${\sf BE}$ and ${\sf RDIV}$.
If ${\sf IVT}$ is boldface $({\sf WKL}_{\leq 2}\times{\sf AoUC})^{\clo}$-realizable, then as above we have ${\sf IVT}\leqcW({\sf WKL}_{\leq 2}\times{\sf AoUC})^{\clo}$, which contradicts Proposition \ref{prop:sep-ivt-wkl-2-aouc-g}.If ${\Sigma^0_1\lemr}$ is boldface $({\sf AoUC})^{\clo}$-realizable, so is ${\sf LPO}$; however it clearly implies ${\sf LPO}\leqcW({\sf WKL}_{\leq 2}\times{\sf AoUC})^{\clo}$, which contradicts Fact \ref{fact:lpo-wkl2-aouc-ga}.
Hence, ${\bf IZF}+\neg{\Pi^0_1\lemr}+{\sf BE}+{\sf RDIV}$ is $({\sf WKL}_{\leq 2}\times{\sf AoUC})^{\clo}$-realizable.

(7) and (8) also follow from the same argument using results from Sections \ref{sec:lem-for-7} and \ref{sec:lem-for-8}.

(9)--(13): By similar arguments.
\end{proof}

\newpage

\section{Appendix}

\subsection{Realizability Predicate}

\begin{itemize}
\item $e\rea R(\bar{a})\iff\N\models R(\bar{a})$, where $R$ is a primitive recursive relation.
\item $e\rea\bfN(a)\iff a\in\N\;\;\&\;\;e=\ul{a}$.
\item $e\rea\set(a)\iff a\in V^{\rm set}$.
\item $e\reak a\in b\iff\exists c\;[\spair{\pi_0e}{c}\in b\;\land\;\pi_1e\rea a=c]$.
\item $e\rea a\in b\iff\foralle\langle d\rangle\in\jump(e)\;d\reak a\in b$.
\item $d\reak a\subseteq' b\iff(\forall p,c)\;[\spair{p}{c}\in a\;\rightarrow\;dp\rea c\in b]$.
\item If $a,b\in\N$, $e\rea a=b\iff a=b$.
\item If $a,b\in V^{\rm set}$, $e\rea a=b\iff\foralle d\in\jump(e)\;[\pi_0d\reak a\subseteq'b\;\land\;\pi_1d\reak b\subseteq'a]$.
\item $e\rea A\land B\iff \pi_0e\rea A\;\land\;\pi_1e\rea B$.
\item $e\reak A\lor B\iff(\pi_0e=0\land\pi_1e\rea A)\lor(\pi_0e=1\land\pi_1e\rea B)$.
\item $e\rea A\lor B\iff(\foralle d\in\jump(e))\;d\reak A\lor B$.
\item $e\rea \neg A\iff(\forall a)\;a\not\rea A$.
\item $e\rea A\to B\iff(\forall a)\;[a\rea A\;\rightarrow\;ea\rea B]$.
\item $e\reak \forall xA\iff(\forall c\in V)\;e\rea A[c/x]$.
\item $e\rea \forall xA\iff(\foralle d\in\jump(e))\;d\reak\forall xA$.
\item $e\reak \exists xA\iff(\exists c\in V)\;e\rea A[c/x]$.
\item $e\rea \exists xA\iff(\foralle d\in\jump(e))\;d\reak\exists xA$.
\end{itemize}

In our case, we may assume that $\ul{n}=n0^\om$. 

$\foralle x\in A\;P(x)$ means that $\exists x\in A$ and $\forall x\in A\;P(x)$. 

\subsection{Single-valued part}

Given $\jump$, one can get a binary operation $\ast$ on $M$ by $a\ast b\downarrow= c$ iff $\jump(f_a)(b)=\{c\}$.

Proof of Lemma 4.6:

There exists $\one$ such that if $\jump(e)$ is a singleton then $\one e\in\jump(e)$.
%
\[
\infer{\one e\rea\mathbf{N}(a)}{
\infer{\jump(e)=\{\ul{a}\}}{
		\foralle d\in\jump(e)\;d\rea \mathbf{N}(a)
	}
}
\]

For a primitive recursion relation $R$ and the predicate ${\bf Set}$, the claim is trivial.
Similar for $a=b$ for $a,b\in\N$.
\[
	\infer{{\sf j}e\rea a=b}{
		\infer{\foralle c\in\jump({\sf j}e)\;c\reak a=b}{
			\infer{\foralle c\in\jump\jump(e)\;c\reak a=b}{
				\infer{\foralle d\in\jump(e)\foralle c\in\jump(d)\;c\reak a=b}{
					\foralle d\in\jump(e)\;d\rea a=b
				}
			}
		}
	}
\]

For $\land$:
\[
	\infer{\langle\chi_A({\sf u}\pi_0e),\chi_B({\sf u}\pi_1e)\rangle\rea A\land B}{
		\infer{\chi_A({\sf u}\pi_0e)\rea A\mbox{ and }\chi_B({\sf u}\pi_1e)\rea B}{
			\infer{(\foralle c\in\jump({\sf u}\pi_0e)\;c\rea A)\mbox{ and }(\foralle c\in\jump({\sf u}\pi_1e)\;c\rea B)}{
				\infer{\foralle d\in\jump(e)\;[\pi_0d\rea A\mbox{ and }\pi_1d\rea B]}{
					\foralle d\in\jump(e)\;d\rea A\land B
				}
			}
		}
	}
\]

For $\to$:
\[
\infer{\lamq a.\chi_B({\sf u}(\lambda x.xa)e)\rea A\to B}{
\infer{a\rea A\implies \chi_B({\sf u}(\lambda x.xa)e)\rea B}{
	\infer{a\rea A\implies \foralle d\in\jump({\sf u}(\lambda x.xa)e)\;d\rea B}{
		\infer{a\rea A\implies \foralle d\in\jump(e)\;(\lambda x.xa)d\rea B}{
			\infer{a\rea A\implies \foralle d\in\jump(e)\;da\rea B}{
				\infer{\foralle d\in\jump(e)\;[a\rea A\implies da\rea B]}{
					\foralle d\in\jump(e)\;d\rea A\to B
				}
			}
		}
	}
}
}
\]

Other constructions are trivial.

\subsection{Intuitionistic Logic}

The interpretation for $\to$ is the same as the usual Kleene realizability.
$(\to E)$:
\[
\infer{\lamq a.fa(xa)\rea D\vdash B}{
	\infer{a\rea D\implies fa(xa)\rea B}{
		\infer{a\rea D\mbox{ and }z\rea A\implies faz\rea B}{
			\infer{a\rea D\implies fa\rea A\to B}{
				f\rea D\vdash A\to B
			}
		}
		& &
		\infer{a\rea D\implies xa\rea A}{
			x \rea D\vdash A
		}
	}
}
\]

($\to I$):
\[
\infer{{\sf curry}(p):=\lamq y.\lamq z.p\langle y,z\rangle\rea D\vdash A\to B}{
	\infer{\lamq y.\lamq z.p\langle y,z\rangle\rea D\to(A\to B)}{
		\infer{y\rea D\implies \lamq z.p\langle y,z\rangle\rea A\to B}{
			\infer{y\rea D\text{ and }z\rea A\implies p\langle y,z\rangle \rea B}{
				\infer{x\rea D\land A\implies px\rea B}{
					\infer{p\rea D\land A\to B}{
						p\rea D,A\vdash B
					}
				}
			}
		}
	}
}
\]

For $(\land)$-rules:
\begin{align*}
\infer{\lamq x.\langle ax,bx\rangle\rea D\vdash A\land B}{
	\infer{x\rea D\implies\langle ax,bx\rangle\rea A\land B}{
		\infer{x\rea D\implies ax\rea A}{
			a\rea D\vdash A
		}
		& &
		\infer{x\rea D\implies bx\rea B}{
			b\rea D\vdash B
		}
	}
}
& &
\infer{\lamq x.\pi_i(cx)\rea D\vdash A_i}{
	\infer{x\rea D\implies\pi_i(cx)\rea D\vdash A_i}{
		\infer{x\rea D\implies cx\rea A_0\land A_1}{
			c\rea D\vdash A_0\land A_1
		}
	}
}
\end{align*}

For $\lor$-rules:
\[
\infer[\text{($\lor I$)}]{\lamq x.\iota\langle \underline{i},cx\rangle\rea D\vdash A_0\lor A_1}{
	\infer{x\rea D\implies \iota\langle\underline{i},cx\rangle\rea A_0\lor A_1}{
	\infer{x\rea D\implies \langle\underline{i},cx\rangle\reak A_0\lor A_1}{
		\infer{x\rea D\implies cx\rea A_i}{
			c\rea D\vdash A_i
		}
	}
	}
}
\]

For ($\lor E$):
\[
\infer{\chi_C({\sf u}[s,t]e)\rea C}{
\infer{\foralle d\in\jump({\sf u}[s,t]e)\; d\rea C}{
\infer{\foralle d\in[s,t]\jump(e)\;d\rea C}{
\infer[\text{($\lor E$)}]{(\foralle p\in\jump{e})\;[s,t]p:=\ifthen{\pi_0p}{s(\pi_1p)}{t(\pi_1p)}\rea C}{
	\infer{\foralle p\in\jump(e)\;p\reak A\lor B}{
		e\rea A\lor B
	}
	& &
	\infer{x\rea A\implies sx\rea C}{
		s\rea A\vdash C
	}
	& &
	\infer{x\rea B\implies tx\rea C}{
		t\rea B\vdash C
	}
}
}
}
}
\]

一応，$\lor$の除去規則($\lor E$)の実現について説明すると，$p$は$A\lor B$を実現しているので，$\pi_0p=\underline{0}$の場合は，$\pi_1p\rea A$である．
よって，$x=\pi_1p$を考えれば，$sx=s(\pi_1p)\rea C$を得る．
同様に，$\pi_0p\not=\underline{0}$の場合は，$\pi_1p\rea B$であるから，$x=\pi_1p$を考えれば，$tx=t(\pi_1p)\rea C$を得る．

$\Gamma$が空でない場合，$\lor$の除去規則($\lor E$)の実現は，一旦$\to$の導入規則($\to I$)を経由すると多少，楽になる．
つまり，$p,s,t$が($\lor E$)の上式を実現している場合，($\to I$)の実現の議論から，$s$と$t$を$\tilde{s}=\lamq yz.s\langle y,z\rangle$と$\tilde{t}=\lamq yz.t\langle y,z\rangle$に置き換えると，以下が成立する．
{\small
\begin{align*}
		\infer{a\rea D\implies (\foralle p\in\jump(e))\;pa\reak A\lor B}{
			\infer{e\rea D\implies e\rea A\lor B}{
				e\rea D\vdash A\lor B
			}
		}
	& &
		\infer{a\rea D\implies\tilde{s}a\rea A\to C}{
			\infer{\tilde{s}\rea D\vdash A\to C}{
				s\rea D,A\vdash C
			}
		}
	& &
		\infer{a\rea D\implies\tilde{t}a\rea B\to C}{
			\infer{\tilde{t}\rea D\vdash B\to C}{
				t\rea D,B\vdash C
			}
		}
\end{align*}
}

したがって，$a\rea D$が与えられているとき，以下が成立する．
\[
\infer{\chi_C({\sf u}[\tilde{s},\tilde{t}]e)a\rea C}{
\infer{(\foralle p\in\jump(e))\;[\tilde{s},\tilde{t}]pa=\ifthen{\pi_0(pa)}{\tilde{s}a(\pi_1(pa))}{\tilde{t}a(\pi_1(pa))}\rea C}{
	{(\foralle p\in\jump(e))\;pa\rea A\lor B}
	& &
	\infer{x\rea A\implies \tilde{s}ax\rea C}{
		\tilde{s}a\rea A\to C
	}
	& &
	\infer{x\rea B\implies \tilde{t}ax\rea C}{
		\tilde{t}a\rea B\to C
	}
}
}
\]

以上より，$\lamq a.\chi_C({\sf u}[\tilde{s},\tilde{t}]e)a$が$D\vdash C$を実現することが示された．

\medskip

まず，($\exists I$)と($\forall E$)の推論規則の実現について，$\Gamma$が空でない場合を考えても，記号が煩雑になるだけで証明は本質的に変わらないので，$\Gamma=\emptyset$であると仮定する．
$\Gamma$が空でない場合については，読者の演習問題とする．
特に変数条件のない($\exists I$)と($\forall E$)については，以下のように容易に実現できる．
\begin{align*}
\infer{\iota a\rea\exists uA(x,\bar{v})}{
\infer{(\exists x)\;a\rea A(x,\bar{v})}{
a\rea A(u,\bar{v})
}
}
& &
\infer{\chi_A(e)\rea A(u,\bar{v})}{
	\infer{(\foralle p\in\jump(e))(\forall u)\;p\rea A(u,\bar{v})}{
		e\rea \forall xA(x,\bar{v})
	}
}
\end{align*}

次に($\forall I$)について，議論を明確にするために$\Gamma=B$の場合を考えよう．
いま，$t(z,\bar{v})$が$B(\bar{v})\vdash A(z,\bar{v})$を実現すると仮定する．
このとき，変数$z$に$y$を代入してみよう．
変数条件から，$B$は$z$を変数として含まない，つまり$\bar{v}$の中に$z$は含まれない．
よって，文字通り$z$を$y$に置き換えると，$t(y,\bar{v})$は$B(\bar{v})\vdash A(y,\bar{v})$を実現する，と言い換えられる．
以上をまとめると，
\[
\infer{\lamq a.\iota ea\rea B(\bar{v})\vdash\forall xA(x,\bar{v})}{
\infer{a\rea B(\bar{v})\implies \iota ea\rea \forall xA(x,\bar{v})}{
\infer{a\rea B(\bar{v})\implies ea\reak \forall xA(x,\bar{v})}{
\infer{a\rea B(\bar{v})\implies (\forall y)\;ea\rea A(y,\bar{v})}{
\infer[\text{\small ($y$は任意なので)}]{(\forall y)\;[a\rea B(\bar{v})\implies ea\rea A(y,\bar{v})]}{
	\infer{a\rea B(\bar{v})\implies ea\rea A(y,\bar{v})}{
		\infer[\text{\small (変数条件より)}]{e\rea B(\bar{v})\vdash A(y,\bar{v})}{
			e\rea B(\bar{v})\vdash A(z,\bar{v})
		}
	}
}
}
}
}
}
\]

最後に($\exists E$)について議論する．
簡単のために$\Gamma$は省略する．
先程の議論のように，もし$t(z,\bar{v})$が$A(z,\bar{v})\vdash B(\bar{v})$を実現すると仮定する．
変数条件から$\bar{v}$の中には$z$は含まれないので，$z$に$y$を代入すると，$t(y,\bar{v})$は式$A(y,\bar{v})\vdash B(\bar{v})$を実現する．
これはどんな$y$についても成り立つから，任意の$y$について$t(y,\bar{v})$は$A(y,\bar{v})\vdash B(\bar{v})$を実現する．
以上より，
\[
\infer{\chi_B({\sf u}ep)\rea B(\bar{v})}{
\infer{\foralle d\in\jump({\sf u}ep)\;d\rea B(\bar{v})}{
\infer[\text{\small ($y$は任意なので)}]{(\forall a\in\jump(p))\;ea\rea B(\bar{v})}{
	\infer{(\foralle a\in\jump(p))(\exists u)\;a\rea A(u,\bar{v})}{
		p\rea \exists xA(x,\bar{v})
	}
	& &
		\infer{a\rea A(y,\bar{v})\implies ea\rea B(\bar{v})}{
			\infer[\text{\small (変数条件より)}]{e\rea A(y,\bar{v})\vdash B(\bar{v})}{
				e\rea A(z,\bar{v})\vdash B(\bar{v})
			}
		}
}
}
}
\]

\subsection{First order arithmetic}

We show that the number quantification is the same as the Kleene realizability.
For ($\forall^\N E$):
\[
\infer{\chi_A({\sf u}(\lambda x.xn)e)\rea A(n)}{
\infer{(\foralle d\in\jump({\sf u}(\lambda x.xn)e))\;d\rea A(n)}{
	\infer{(\forall n\in\N)(\foralle d\in\jump(e))\;(\lambda x.xn)d\rea A(n)}{
		\infer{(\foralle d\in\jump(e))(\forall n\in\N)\;dn\rea A(n)}{
			\infer{(\foralle d\in\jump(e))(\forall n)\;[a\rea \bfN(n)\implies da\rea A(n)]}{
				\infer{(\foralle d\in\jump(e))(\forall n)\;d\rea \bfN(n)\to A(n)}{
					e\rea (\forall x)\;\bfN(x)\to A(x)
				}
			}
		}
	}
}
}
\]

For ($\forall^\N I$):
\[
			\infer{\iota e\rea(\forall x)\;\bfN(x)\to A(x)}{
				\infer{(\forall n)\;e\rea\bfN(n)\to A(n)}{
					\infer{(\forall n)\;[n\rea\bfN(n)\implies en\rea A(n)]}{
						(\forall n\in\N)\;en\rea A(n)
					}
				}
			}
\]

For ($\exists^\N E$) and ($\forall^\N E$):
\[
			\infer{(\foralle d\in\jump(e))\;\pi_1d\rea A(\pi_0d)}{
				\infer{(\foralle d\in\jump(e))(\exists n)\;\pi_0d\rea\bfN(n)\;\mbox{ and }\;\pi_1d\rea A(n)}{
					\infer{(\foralle d\in\jump(e))(\exists n)\;d\rea\bfN(n)\land A(n)}{
						e\rea(\exists x)\;\bfN(x)\land A(x)
					}
				}
			}
\]

What's the difference between realizability and relative realizability?

\begin{lemma}
Let $x\in V^\jump$.
Then the following are equivalent:
\begin{enumerate}
\item $V^\jump\models\forall n\in\N\exists m\in\N\;P(n,m)$.
\item $P$ has a $\jump$-realizable choice.
\end{enumerate}
\end{lemma}

\[
\infer{\forall n\foralle r\in\jump\jump(u(\lambda x.x\ul{n})p)\exists m\;\pi_0r=\ul{m}\;\land\;\pi_1r\rea\langle n,\pi_0r\rangle\in x}{
	\infer{\forall n\foralle q\in\jump(p)\foralle r\in\jump((\lambda x.x\ul{n})q)\exists m\;\pi_0r=\ul{m}\;\land\;\pi_1r\rea\langle n,\pi_0r\rangle\in x}{
		\infer{\forall n\foralle q\in\jump(p)\foralle r\in\jump(q\ul{n})\exists m\;r\rea\mathbf{N}(m)\;\land\;\langle n,m\rangle\in x}{
			\infer{\forall n\foralle q\in\jump(p)\;q\ul{n}\rea\exists m\in\N\;\langle n,m\rangle\in x}{
				\infer{\forall n\foralle q\in\jump(p)\;[a\rea\mathbf{N}(n)\implies qa\rea\exists m\in\N\;\langle n,m\rangle\in x]}{
					\infer{\foralle q\in\jump(p)\forall n\;q\rea\mathbf{N}(n)\,\rightarrow\,\exists m\in\N\;\langle n,m\rangle\in x}{
						p\rea\forall n\in\N\exists m\in\N\;\langle n,m\rangle\in x
					}
				}
			}
		}
	}
}
\]

Now we discuss what the internal Baire space $(\N^\N)^\jump$ is.
\[(\N^\N)^\jump=\left\{x\in V^\jump:V^\jump\models\forall n\in\N\exists ! m\in\N\;\langle n,m\rangle\in x\right\}.\]

In particular, $p$ computes a name of a choice function $f\colon\om\to\jump\om$ for $x$.
Thus, the internal Baire space consists of $\jump$-realizable total functions on $\om$.

\subsection{Second order arithmetic}

Check $\exists^{\N^\N}$ and $\forall^{\N^\N}$:

\subsection{Equality axiom}

For reflexivity, for $n\in\N$, for any $a$, we have $a\rea n=n$ by definition.
We show reflexivity for sets $a=a\in V$ by induction on ranks ${\rm rank}(a)=\alpha$.
Assume that $e\rea b=b$ for any $b$ with ${\rm rank}(b)<\alpha$.
Then
\[
			\infer{\iota\langle f,e\rangle\rea b\in a}{
				\infer{\langle f,e\rangle\reak b\in a}{
					\infer{(\exists c)\;\spair{f}{c}\in a\; \mbox{ and }\; e\rea b=c}{
						\spair{f}{b}\in a & & e\rea b=b
					}
				}
			}
\]

Therefore,
\[
	\infer{\iota\langle \lambda x.\iota\langle x,e\rangle,\lambda x.\iota\langle x,e\rangle\rangle\rea a=a}{
		\infer{\forall f,b\;[\spair{f}{b}\in a\;\to\;(\lambda x.\iota\langle x,e\rangle)f\rea b\in a]}{
			\forall f,b\;[\spair{f}{b}\in a\;\to\;\iota\langle f,e\rangle\rea b\in a]
		}
	}
\]

Then, define $Q(e)=\iota\langle \lambda x.\iota\langle x,e\rangle,\lambda x.\iota\langle x,e\rangle\rangle$.
By recursion theorem, there is $e^\ast$ such that $Q(e^\ast)\simeq e^\ast$.
This implies that $e^\ast\rea b=b$ for any with ${\rm rank}(b)\leq\alpha$.

For symmetry:
\[
\]

なぜ可算選択が成り立たないのか考える．

\end{document}

\newpage